\newcommand{\Gee}{\mathcal{G}}
\newcommand{\Ell}{\mathcal{L}}
\renewcommand{\H}{\mathcal{H}}
\newcommand{\reals}{\mathbb{R}}
\newcommand{\IND}[1]{{\bf 1}_{#1}}
\newcommand{\absv}[1]{\left| #1 \right|}
\newcommand{\norm}[2]{\left\| #1 \right\|_{#2}}
\newcommand{\Dee}{\mathcal{D}}
\newcommand{\innerprod}[3]{ \left( #1 , #2 \right)_{#3} }
\newcommand{\tdelta}{\tilde{\delta}}
\newcommand{\bigO}{\mathcal{O}}
\newcommand{\Span}{\text{\bf span }}
\newcommand{\mb}[1]{\mathbf{#1}}
\newtheorem{definition}{Definition}
\newtheorem{proposition}{Proposition}
\newtheorem{theorem}{Theorem}
\begin{document}

\title{On regularizations of the Dirac delta distribution\tnoteref{t1}}

\tnotetext[t1]{This work was
    supported in part by the Natural Sciences and Engineering Research
    Council of Canada and the Canada Research Chairs program.}

 \author[sfu]{Bamdad Hosseini \corref{corauth}}
    \ead{bhossein@sfu.ca}
  \author[sfu]{Nilima Nigam}
    \ead{nigam@math.sfu.ca}
  \author[sfu]{John M.~Stockie}
    \ead{stockie@math.sfu.ca}

\address[sfu]{Department of Mathematics, Simon Fraser University, 
  8888 University Drive, Burnaby, BC, V5A 1S6, Canada} 
\cortext[corauth]{Corresponding author}

\begin{abstract} { In this article we consider regularizations of the
    Dirac delta distribution with applications to prototypical elliptic
    and hyperbolic partial differential equations (PDEs). We study the
    convergence of a sequence of distributions $\mathcal{S}_H$ to a
    singular term $\mathcal{S}$ as a parameter $H$ (associated with the
    {support size} of $\mathcal{S}_H$) shrinks to zero. We characterize
    this convergence in both the weak-$\ast$ topology of distributions,
    as well as in a weighted Sobolev norm.  These notions motivate a
    framework for constructing regularizations of the delta distribution
    that includes a large class of existing methods in the
    literature. This framework allows different regularizations to be compared. The convergence of solutions of PDEs with
   these regularized source terms is then studied in various topologies such
    as pointwise convergence on a deleted neighborhood and weighted
    Sobolev norms. We also examine the
    lack of symmetry in tensor product regularizations and effects of
    dissipative error in hyperbolic problems.}
\end{abstract}
%% Keywords

\begin{keyword}
Dirac delta function \sep
  singular source term \sep
  discrete delta function \sep
  approximation theory \sep 
  weighted Sobolev spaces.

%% AMS subject classification
\MSC
  41A10 \sep % Approximation by polynomials.
  41A65 \sep % Abstract approximation theory (approximation in normed linear
         % spaces and other abstract spaces). 
  46E35 \sep % Linear function spaces and their duals; Sobolev spaces and
         % other spaces of "smooth" functions, embedding theorems, trace
         % theorems. 
  46T30 \sep % Nonlinear functional analysis; Distributions and generalized
         % functions on nonlinear spaces.
  65N30. % Finite elements, Rayleigh-Ritz and Galerkin methods, finite
         % methods. 
\end{keyword}

\maketitle

\pagestyle{myheadings}
\thispagestyle{plain}
\markright{B.~Hosseini, N.~Nigam, J. M.~Stockie / On regularizations
  of the Dirac delta distribution}

%%%%%%%%%%%%%%%%%%%%%%%%%%%%%%%%%%%%%%%
%%%%%%%%%%%%%%%%%%%%%%%%%%%%%%%%%%%%%%%
%%%%%%%%%%%%%                                %%%%%%%%%%%%%%
%%%%%%%%%%%%%      Introduction              %%%%%%%%%%%%%%
%%%%%%%%%%%%%                                %%%%%%%%%%%%%%
%%%%%%%%%%%%%%%%%%%%%%%%%%%%%%%%%%%%%%%
%%%%%%%%%%%%%%%%%%%%%%%%%%%%%%%%%%%%%%%

\section{Introduction}
\label{sec:intro}

Many phenomena in the physical sciences are modelled by
partial differential equations (PDE) with singular source terms.  The
solutions of such PDE models are often studied using numerical
approximations. In some computational approaches, the singular source
terms are represented {\it exactly}, such as in~\cite{Morin,
  boffi-gastaldi-2003, boffi-gastaldi-2014, shu}.  A more common
approach is to approximate the source term using some regularized
function, and then obtain the numerical solution using a discretization
of the PDE with the approximate source.  One prominent example of the
utility of singular sources in applications is the immersed boundary
method~\cite{peskin}, wherein a Dirac delta distribution supported on an
immersed fiber or surface is used to capture the two-way interaction
between a dynamically evolving elastic membrane and the incompressible
fluid in which it is immersed.  In immersed boundary simulations, the
Dirac delta is replaced by a continuous approximation that is 
designed to satisfy a number of constraints that guarantee certain
desirable properties of the analytical and numerical solution.  Related
approximations are also employed in connection with the level set
method~\cite{osher} and vortex methods~\cite{majda,CortezMinion}.

Suppose we represent the original problem of interest in an abstract
form as follows:
\begin{subequations} \label{Problems}
  \begin{enumerate}[{\it {Problem}~1:}]
  \item {\it Find $u$ such that
      \begin{equation} 
        \mathcal{L}(u) = \mathcal{S}, 
      \end{equation} 
      where $\mathcal{L}$ is a PDE operator and $\mathcal{S}$ is a distribution which is used to model a singular source.}
  \end{enumerate}
  Then let $\mathcal{S}_H$ denote some approximation of
  $\mathcal{S}$ and consider the associated problem
  \begin{enumerate}[{\it {Problem}~2:}]
  \item {\it Find $u_H$ such that}
    \begin{equation} 
      \mathcal{L}(u_H) = \mathcal{S}_H.
    \end{equation} 
  \end{enumerate}
\end{subequations}    
Here, $H>0$ is some small parameter for which $\mathcal{S}_H \rightarrow \mathcal{S}$ in
some sense as $H\rightarrow 0$ (a sense that will be made concrete later
on).  One may then apply an appropriate numerical scheme (e.g., finite
difference, finite volume, finite element, spectral, etc.) with a
discretization parameter $h>0$, thereby obtaining two discrete solution
approximations: $u_h$ to $u$ in {\em Problem~1}; and $u_{H,h}$ to $u_H$
in {\em Problem~2}. We are free, of course, to pick one numerical scheme
for {\em Problem~1} and a different scheme for {\em Problem~2}.  If the
numerical schemes are suitably well-chosen, then both $u_h \rightarrow
u$ and $u_{H,h}\rightarrow u_H$ as $h\rightarrow 0$.
  
In practical computations, it may not be possible to construct  $u_h$. Indeed, it is typically only $u_{H,h}$ that is computed, by
first prescribing some approximation  to the source term and then
discretizing the PDE with the approximate source.  Ideally, what we hope
to obtain is that as both $h,H\rightarrow 0$, the discrete approximant
$u_{H,h} \approx u$.  In the immersed boundary method, for example, the
source term $\mathcal{S}$ is a line source and both the approximation
of the source term and the discretization of the PDE are performed with
reference to the same underlying spatial grid, so that parameters $H$
and $h$ are identical. Convergence of $u_{H,h} \to u$ in the context of
the immersed boundary method has been the subject of detailed analysis
in the works of Liu and Mori \cite{liumori, liumori2, mori-CPAM}.  However, these
authors only focus on convergence of the discrete regularizations and do
not consider $u_H \to u$. 

In this article we are concerned primarily with two questions:
\begin{enumerate}[{\it {Question}~1.}]
\item How do we construct `good' approximations $\mathcal{S}_H$ to
  $\mathcal{S}$?
\item How does the choice of approximation $\mathcal{S}_H$ affect the
  the convergence of $u_{H,h} \rightarrow u$?
\end{enumerate}
Before we can formulate answers to the above, we have to first answer
the two related questions: 
\begin{enumerate}[{\it {Question}~1.}]
  \setcounter{enumi}{2}
\item What form of convergence should be used to examine $\mathcal{S}_H
  \rightarrow \mathcal{S}$? 
\item What form of convergence should be used to examine $u_{H,h}
  \rightarrow u$?
\end{enumerate}

In this paper, we restrict our attention to the particular case of
$\mathcal{S}=\delta$ which denotes the well-known point source
distribution (or Dirac delta distribution) having support at the origin.

{\it Questions~1} and {\it 2} are fairly well-studied in some contexts
\cite{engquist, liumori, liumori2, beyer-leveque, tornberg,
  tornberg-engq, tornberg-quad, tornberg-zahedi} but {\it Questions~3} and {\it 4} have
not been the subject of much scrutiny in the literature.
%\todo{give references}. 
A common approach for approximating $\mathcal{S}_H$ (via regularization)
 is to construct a {\it discrete regularization} that is tailored to
 specific quadrature methods.  Wald\'en~\cite{Walden} presents an
 analysis of discrete approximations of the delta distribution,
 restricting his attention to applications to PDEs in one dimension.
 Tornberg and Engquist~\cite{tornberg} analyze discrete approximations
 to the delta distribution in multiple dimensions { with
   compact support} and draw a connection between the discrete moment
 conditions and the order of convergence of the solution of a PDE with
 the discrete $\mathcal{S}_H$ as source term. They also consider
 approximations of line sources using a singular source term or a
 collection of delta distributions in a chain. The analyses of
 Tornberg~\cite{tornberg-quad} and Tornberg and
 Engquist~\cite{tornberg-engq} for the discrete approximations
 $\mathcal{S}_H$ rely on the choice of mesh and quadrature rules.  They
 also restrict $H=\mathcal{O}(h)$ and compare $u_{H,h}$ directly to $u$
 so that, $\mathcal{S}_H$ is de based on the numerical method used to
 compute $u_{H,h}$.  More recently, Suarez et al.~\cite{suarez}
 considered regularizations of the delta distribution that are tailored
 to spectral collocation methods for the solution of hyperbolic
 conservation laws. Their approach to constructing polynomial
 regularizations using the Chebyshev basis has a similar flavor to our
 approach, as will be seen in
 Section~\ref{section:regularizations}. { In a different
   approach, Benvenuti et al.~\cite{Benvenuti-reg-sing} study the case
   of regularizations that are not compactly supported but have rapidly
   decaying Fourier transforms in the context of extended finite element
   methods (XFEM) \cite{Benvenuti-XFEM}.  The authors demonstrate that
   such regularizations lead to lower numerical errors since they can be
   integrated using common quadrature methods such as Gauss quadrature.}

{ In this article} we demonstrate firstly how to develop regularizations $\mathcal{S}_H$
independent of the choice of numerical discretization. For example,
in answering {\it Question~1} we derive {\it piecewise smooth}
approximations $\mathcal{S}_H$.  We can then examine the intermediate
errors $\|u-u_{H}\|_X$ and $\|u_{H}-u_{H,h}\|_X$ and use the triangle
inequality to give a bound on 
\begin{equation}\label{reg_error_and_dis_error}
  \|u-u_{H,h}\|_X \leq 
  \underbrace{\|u-u_H\|_X}_{\mbox{\footnotesize regularization error}} 
  + \underbrace{\|u_H-u_{H,h}\|_X}_{\mbox{\footnotesize discretization error}}, 
\end{equation}
where $\|\cdot\|_X$ refers to a suitably chosen norm; the choice of norms is discussed below.  For fixed $H>0$ the discretization errors
$\|u_{H,h}-u_H\|_X$ are analyzed using properties of the numerical
scheme and regularity of solutions of {\it Problem~2}.  The resulting
discretization errors are well-understood for specific problems
and specific schemes, and so we focus our attention here on the
regularization errors. 

{ We propose a unified
  approach for construction
and analysis of
  regularizations.} This
 has three advantages: First, we are able to provide a simple strategy
 for constructing { new} regularizations suitable for a given
 application (and not constrained to a specific numerical method for
 that application). { Second, our
 framework is flexible and allows us to study the effect of
 additional constraints on the regularizations. For example, it is 
common in the immersed boundary method to impose the even-odd
conditions on the discrete regularizations, but the
effect of these conditions on the regularization error is subtle \cite{mori-CPAM}.}
{ Third, our framework provides a unified platform from which
  to compare different regularizations in the literature. For example, in
 Section 3 we show that both radially symmetric and tensor product
 regularizations lead to the same regularization errors, but their
 discretization errors may differ. This provides more insight into the
 result of \cite{tornberg} indicating that
 regularizations based on distance functions can lead to large
 errors. Our results indicate that this is due 
 to the discretization error and not the regularization error. An
 important point to note is that we do not account for errors due to the
 use of discrete quadrature rules; these are embedded in the analysis of
 the discretization errors, and our goal is instead to examine the
 errors purely due to regularization.}

The construction procedure requires us to first obtain answers to {\it
  Questions~3} and {\it 4}.  The mode of convergence and (where
appropriate) the norm $\|\cdot \|_X$ are chosen in a problem-dependent
manner.  More precisely, if we seek approximations of solutions to {\em
  Problem~1} in some Hilbert space $\H$, and if the PDE operator
$\mathcal{L}:\H \rightarrow \H^*$, then we need a strategy to construct
regularizations $\mathcal{S}_H$ from suitable elements in~$\mathcal{H}^\ast$
so that $u_H$ converges to $u$ in $\mathcal{H}$.

We begin by constructing linear functionals $\tilde{\delta}_H$ via smooth and
compactly supported elements
of $\mathcal{H}$.
We then discuss the convergence of $\tdelta_H \to \delta$ in the {\it
  weak-$\ast$ topology} and in a class of {\it weighted Sobolev
  norms}. Then we derive a set of continuous moment conditions that are
entirely analogous to the discrete moment conditions of~\cite{tornberg}.
A strategy for solving a finite number of moment equations is
presented using a basis of our choice, after which we consider
convergence of $u_H \to u$ in the {\it sup-norm in a deleted
  neighbourhood} and a different class of {weighted Sobolev norms}
for some operators $\mathcal{L}$.

{ We emphasize that in this article our focus is on regularizations that have
  compact support. However, some of our results such as the
  moment conditions
  of Definition 3 can be readily generalized to the case of
  regularizations that are not compactly supported.}
We also note that in general, the continuous regularizations we construct are
not unique. In the specific case where Legendre polynomials are chosen
as a basis while solving the moment problem, 
the continuous regularizations are polynomials on the support of
$\delta_H \in \H$. These $\delta_H$ can then be made to have arbitrary smoothness by adding extra continuity
conditions; this should be compared with the piecewise polynomial
approximants that are obtained using discrete regularizations.

In most applications of interest, the domains on which we numerically
compute our approximate PDE solutions are polygonal.  We also focus on
PDE operators $\mathcal{L}$ associated with free space or zero Dirichlet
boundary data, although our analysis and solution construction are
easily extended to more general cases. 

The framework we present in this paper is independent of the specific
choice of numerical method used to approximate solutions of the
regularized problem. To this end, we present computations using
spectral and finite element methods within readily-available implementations.

Throughout the remainder of this paper, $\Omega$
will represent an open Lipschitz domain in $\mathbb{R}^n$ with polygonal
boundary that contains the origin.

%%%%%%%%%%%%%%%%%%%%%%%%%%%%%%%%%%%%%%%
%%%%%%%%%%%%%%%%%%%%%%%%%%%%%%%%%%%%%%%
%%%%%%%%%%%%%                                  %%%%%%%%%%%%%%
%%%%%%%%%%%%%      Section 2                   %%%%%%%%%%%%%%
%%%%%%%%%%%%%                                  %%%%%%%%%%%%%%
%%%%%%%%%%%%%%%%%%%%%%%%%%%%%%%%%%%%%%%
%%%%%%%%%%%%%%%%%%%%%%%%%%%%%%%%%%%%%%%

%\input{Section2}
\section{Background}
\label{section:background}

We begin by fixing notation. We denote the set of {\it test functions}
by $\Dee(\Omega)$. This is the space of real-valued $C^\infty$ functions
that are compactly supported in $\Omega$ and equipped with the usual
topology of the test functions. We denote the {\it distributions} by
$\Dee^\ast(\Omega)$, the dual of $\Dee(\Omega)$.  The point source or
delta distribution $\delta\in\Dee^\ast(\Omega)$ (also called the Dirac
delta functional) is defined by
\begin{equation*}
  \label{deltadefinition}
  \delta(\phi) := \phi(0) \quad \text{ for } \phi \in \Dee(\Omega).  
\end{equation*}

A natural setting in which to study solutions of {\it Problem 1} is in a
Sobolev space $\H_0^s(\Omega)$ for $s\in \mathbb{R}$ and $s>0$.   The particular choice of $s$ is
dictated by the differential operator $\mathcal{L}$. Since
$\H_0^s(\Omega)$ is a Hilbert space, and since the continuous inclusion
map from $\Dee(\Omega)\rightarrow \H_0^s(\Omega)$ is dense, the spaces
$(\Dee(\Omega), \H_0^s(\Omega), \Dee^\ast(\Omega))$ form a Gel'fand
triple (see \cite{gelfand}). This means that we can identify the dual of
$\H^s_0(\Omega)$ with elements of $\Dee^\ast(\Omega)$,
i.e. $(\H^s_0)^\ast(\Omega)\subset \Dee^\ast(\Omega)$. Instead of
working with distributions directly, we compute with piecewise
polynomials or trigonometric functions. We connect the desired
distributions with these functions {using an $L^2-$inner product}.

For the concrete situation in this paper, the goal is to approximate the delta distribution $\delta\in\Dee^\ast(\Omega)$
using other distributions $\tdelta_H \in \Dee^\ast(\Omega)$ for $H>0$, with the property that
$\tdelta_H \rightarrow \delta$ in some suitable sense. {These approximations are typically {\it
  regularizations}.  We construct regularizations
of a specific kind: we start with a suitably regular function $\delta_H\in \H^s_0(\Omega)$, and define the linear functional
 $\tdelta_H \in
 \H^{-s}(\Omega)\equiv (\H^s_0)^\ast(\Omega)$  
as
\begin{equation}
  \label{rieszrep}
  \tdelta_H(\phi) := \innerprod{ \delta_H
  }{u}{\Omega} \quad \forall \phi \in \mathcal{H}^s_0(\Omega),  
\end{equation}
where $\innerprod{\cdot}{\cdot}{\Omega}$ is the $L^2(\Omega)$ inner
product. In other
words, we aim to identify $\delta_H \in \H^s_0(\Omega)$ that are
convenient to work with, which also satisfy rules that guarantee a high
rate of convergence of the associated distributions $\tdelta_H
\rightarrow \delta$ in specific norms.  The definition \eqref{rieszrep}
allows us to relate functions $\delta_H$ with their associated
distributions $\tdelta_H$; in what follows we will occasionally use
these interchangeably when the context is clear.}

{ A key feature of this framework is its flexibility, deriving from the
fact that we choose a value of $s>0$ to
ensure a desired regularity for $\delta_H$ and then define the regularization $\tdelta_H$ via \eqref{rieszrep}.} This allows us to design a
regularization that is best suited not only for a given problem but also
for a given choice of the numerical method. Another important feature is
that we can decouple the rate of convergence of $\tdelta_H\rightarrow
\delta$ from the regularity of $\tdelta_H$. Finally, we can {\it
  compare} the quality of different regularizations in the literature in
this framework.

Previous works of Tornberg~\cite{tornberg}, Tornberg and
Engquist~\cite{tornberg-engq} and Walden \cite{Walden} follow a similar
strategy of construction in the sense that a distribution is
approximated using inner products. However, an essential difference from
our approach is that they construct the discrete approximant by
replacing $(\cdot, \cdot)_{\Omega}$ with a suitable quadrature scheme, so that
$\tilde{\delta}_H$ is specified in terms of point values at appropriate points
in $\Omega$.

In the following two subsections we briefly describe two modes of
convergence of a sequence of distributions $\{\mathcal S_H\}_H $ to
$\mathcal{S}$: {\it convergence in the weak-$\ast$ topology} and {\it
  convergence in a weighted Sobolev norm.}

\subsection{Weak-$\ast$ convergence and moment conditions}
\label{sec:conv-weakstar}

Recall first the  standard notion of convergence of distributions
$\tdelta_H\in\Dee^\ast(\Omega)$.
\begin{definition}
  The sequence $\tdelta_H$ of distributions converges in the weak-$\ast$
  topology (i.e., converges in distribution) to $\delta$ as
  $H\rightarrow 0$ if and only if
  \begin{equation}
  \label{distributionconv}
    \tdelta_H (\phi) \rightarrow \delta(\phi)=\phi(0) \qquad \forall
    \phi \in \Dee(\Omega)  \quad \text{as } H \to 0.
  \end{equation} 
\end{definition}
\noindent For specific sequences $\tdelta_H$, we are also interested in the {\em
  rate} at which $\tdelta_H\rightarrow \delta$. 

Let $\epsilon>0$, $s > 0$ and $m\in \mathbb{N}$ be given. If $\phi\in
\Dee(\Omega)$ then we can find $r=r(\phi)>0$ such that
$\|\partial^\alpha \phi(x)-\partial^\alpha\phi(0)\|<\epsilon$ for all
$|x|<r$ and $|\alpha|\leq m+1$. Now suppose that for each $H>0$ we have
$\tdelta_H \in (\H^s_0)^\ast$, and furthermore that the associated element
$\delta_H\in \H^s_0(\Omega)$ is integrable and has compact support within
a ball of radius $H$.  Using Taylor's theorem~\cite{evans} and \eqref{rieszrep}, we have for all $0<H<r$ that
\begin{equation*}
  \tdelta_H(\phi)= \innerprod{\delta_H}{\phi}{\Omega}
  = \innerprod{\delta_H}{\phi(0) +  
    \sum_{1 \le \absv{\alpha}\le m} \frac{\partial^\alpha \phi
      (0)}{\alpha !} \mb{x}^\alpha +\sum_{|\beta|=m+1}R_\beta({\mathbf
      y}) (\mb{x})^\beta} {\Omega},   
\end{equation*} 
from which it is clear that
\begin{equation} 
  \label{taylorexpand}
  \tdelta(\phi)= \phi(0) \innerprod{\delta_H}{\IND{\Omega}}{\Omega} +
  \sum_{1 \le \absv{ \alpha}\le m} 
  \frac{\partial^\alpha \phi (0)}{\alpha !}
  \innerprod{\delta_H}{\mb{x}^\alpha }
  {\Omega}+\innerprod{\delta_H}{\sum_{|\beta|=m+1}R_\beta(\mathbf{y})
    (\mb{x})^\alpha}{\Omega},  
\end{equation}
where $\IND{\Omega}$ denotes the characteristic function of $\Omega$ and
we have used standard multi-index notation for $\alpha$ and
$\beta$. Here, $R_{\beta}(\mathbf{y})$ is given by
\begin{equation*}
  R_\beta(\mathbf{y}) = \frac{|\beta|}{\beta!}\int_0^1
  (1-t)^{|\beta|-1}\partial^\beta \phi(t\mathbf{y})\, dt
  \quad \Longrightarrow \quad
  |R_\beta(\mathbf{y})| \leq \frac{1}{\beta!}
  \max_{|\alpha|=|\beta|} \max_{|y|<r} |\partial^\alpha \phi(y)|.
\end{equation*}
Then, recalling \eqref{distributionconv} and \eqref{taylorexpand} we immediately
obtain
\begin{multline}
  \label{deltaerror}
  \absv{\delta(\phi)-\tdelta_H(\phi)}\leq \absv{  
    \phi(0)\left( 1 - \innerprod{\delta_H}{ \IND{\Omega} }{\Omega}
    \right) +  \sum_{1 \le \absv{\alpha}\le m}
    \frac{\partial^\alpha \phi (0)}{\alpha !}
    \innerprod{\delta_H}{\mb{x}^\alpha } {\Omega}}
  \\
  + \absv{\sum_{|\beta|\ge m+1}\innerprod{\delta_H}{R_\beta(\mathbf{y})
      (\mb{x})^\beta}{\Omega} }.
\end{multline}
Clearly, $\tdelta_H \rightarrow \delta$ in the sense of distributions if
the right hand side of \eqref{deltaerror} can be made small as
$H\rightarrow 0$. In practice, we wish to design $\tdelta_H$ to achieve
a given rate of convergence; for example, for fixed $H>0$ we seek to
minimize the first term on the right of \eqref{deltaerror}. To this end,
given $m\in \mathbb{N}$ and $H>0$, we choose regularizations
$\delta_H\in \H_0^s(\Omega)$ to be compactly supported within a ball of
radius $H$ such that
\begin{equation}
  \label{momentsgeneral}
  \innerprod{\delta_H}{ \IND{\Omega} }{\Omega}  = 1 
  \quad \text{and} \quad
  \innerprod{\delta_H}{\mb{x}^\alpha } {\Omega} = 0 
  \quad \text{for } 1 \le \absv{\alpha} \le m. 
\end{equation}
These equations are called the {\it moment conditions}. Such
regularizations $\tdelta_H$ will, thanks to \eqref{deltaerror}, satisfy
\begin{equation}\label{weak-star-estimate}
  \absv{\delta(\phi)-\tdelta_H(\phi)}\leq \frac{1}{\beta!}
  \max_{|\alpha|=|\beta|} \max_{|y|<r} |\partial^\alpha
  \phi(\mathbf{y})| \cdot |\left(\delta_H,\xi^\alpha\right)_{\Omega}| =
  C(\phi, m) H^{m+1}.
\end{equation}
Note that we never need to differentiate $\delta_H$, and hence even if
the Sobolev index $s>0$ is small, we may still ask for a higher rate $m$
of convergence for $\tdelta_H \rightarrow \delta$.

For the construction of discrete regularizations (such as the
methodology in Tornberg et~al.~\cite{tornberg-engq}) the $L^2$-inner
products $(\cdot, \cdot)_{\Omega}$ appearing in our moment conditions
are replaced by quadratures, and the corresponding equations take the
form 
\begin{equation} 
  \label{momentsdiscrete}
  \sum_{k=1}^M \delta_H(\mb{x}_k)\, \omega_k = 1
  \qquad  \text{and} \qquad
  \sum_{k=1}^M \delta_H(\mb{x}_k) \mb{x}_k^\alpha \omega_k = 0 
  \quad \text{for } 1 \le \absv{\alpha} <m , 
\end{equation}
where $\mb{x}_k$ and $\omega_k$ are quadrature nodes and weights
respectively.  The discrete approximants $\delta_H$ are piecewise smooth
functions that are chosen to ensure that these {\it discrete moment
  conditions} are satisfied.

\subsection{Convergence in ${(W_{-\alpha})^\ast}$ and moment
  conditions}  
\label{sec:conv-sobolev}

Another mode of convergence for $\tdelta_H\rightarrow \delta$ derives
from the use of weighted Sobolev norms, for which we follow the
treatment in~\cite{Morin}. These weighted norms offer the advantage that
they behave like the usual energy norm on subdomains away from the
location of the point source.  The weight is chosen to suitably weaken
the norm, or more informally to \emph{cancel the singular behaviour} of
solutions to {\it Problem~1} as we approach the point source.
Consequently, using these weighted norms allows us to develop a
posteriori error estimates for solutions of {\it Problem~1} if
$\mathcal{L}$ is an elliptic operator in a certain class. Throughout the
rest of this article we make use of the notation $a \lesssim b$ whenever
there exists a positive constant $C$ such that $a \le C b$ and $C$ is
independent of $b$. We now recall a number of definitions
from~\cite{Morin}.
\begin{definition}
  For $\Omega \subset \reals^n, n \ge 2$ and constant $\beta \in (-\frac{n}{2},
  \frac{n}{2})$, the space $L_\beta^2(\Omega)$ is defined as the set of
  measurable functions $u$ such that
  \begin{equation}
    \label{weightednorm}
    \norm{u}{L_\beta^2(\Omega)} := \left( \int_\Omega \absv{u(x)}^2
      \absv{x}^{2 \beta} dx\right)^{\frac{1}{2}} < \infty. 
  \end{equation}
  The weighted Sobolev space $\H^1_\beta (\Omega)$ is the space of weakly 
  differentiable functions $u$ such that 
  \begin{equation}
    \label{weightedsobolevspace}
    \norm{u}{\H^1_\beta(\Omega)} := \norm{u}{L_\beta^2(\Omega)}
    + \norm{ \nabla u}{L_\beta^2(\Omega)} < \infty.
  \end{equation}
  Finally, we define the subspace with zero boundary values as
  \begin{equation}
    \label{W-space}
    W_\beta := \{ u \in\H^1_\beta(\Omega) : u|_{\partial \Omega} = 0\}.
  \end{equation}
\end{definition}%

Due to our choice of weight function $|x|^{2\beta}$, the Poincar\'{e}
inequality holds in $W_\beta$ (refer to Section~2 of~\cite{Morin} and
Theorem~1.3 of~\cite{fabes}). We may therefore define a norm
$\norm{u}{W_\beta} := \norm{\nabla u}{L^2_\beta}$ that is equivalent to
the full norm $\norm{u}{\H^1_\beta(\Omega)}$ for $u\in W_\beta$. It
follows that if $\frac{n}{2}-1<\alpha<\frac{n}{2}$, then $W_{-\alpha}\subset
\H^1_0(\Omega)\subset W_{\alpha}$ and hence test functions are dense in
$W_\alpha$ (see~\cite{Heinonen}).

We now apply a variant of the central result from Morin's Theorem
4.7~\cite{Morin}.  Let $\frac{n}{2}-1<\alpha<\frac{n}{2}$ and let $B_R$
be a ball of radius $R$ contained in $\Omega$. Then for all $v \in
W_{-\alpha}(\Omega)$,
{
\begin{equation} 
  |\delta(v)| \lesssim R^{\alpha-n/2}\|v\|_{L^2_{-\alpha}(B_R)} +
  C_\alpha R^{\alpha +(2-n)/2} \|\nabla
  v\|_{L^2_{-\alpha}(B_R)} ,
\end{equation}}
\noindent 
from which we conclude that $\delta\in (W_{-\alpha})^\ast$.  This
suggests that if we have a sequence of regularizations $\tdelta_H \in
\H^{-1}(\Omega) = (\H^1_0(\Omega))^\ast \subset (W_{-\alpha})^\ast$, then
for any $0<H<R$, $\tdelta_H$ converges to $\delta$ in
$(W_{-\alpha})^\ast$ if and only if $\|\delta
-\delta_H\|_{(W_{-\alpha})^{\ast}}\rightarrow 0$.  Specifically, using
the definition of the dual norm, we have
\begin{equation}
  \label{weakdistributionalconvergence} \| \tdelta_H -\delta\|_{
    (W_{-\alpha})^\ast} = \sup_{0\not=u\in W_{-\alpha}}\left\{
    \frac{\tdelta_H(u)-\delta(u)}{\|u\|_{W_{-\alpha}}} \right\}.
\end{equation}
{ 
  In order to use an argument similar to that of Section 2.1, we need to 
  regularize elements of $W_{-\alpha}$. To this end, let
  $\chi_\epsilon$ be the $C^\infty$ cutoff function that takes the value 1
  on $B(0,\epsilon)$, 0 outside $ B(0,2\epsilon)$, and varies smoothly
  between 1 and 0 on the annular region $B(0,2\epsilon)\setminus
  B(0,\epsilon)$ and let $\varphi_\epsilon(x) := \epsilon^{-n}
  \varphi(\mb{x}/\epsilon)$ where $\varphi$ is the usual mollifier
  supported on $B(0,1)$: 
  $$ \varphi(\mb{x}) = \left\{ 
    \begin{aligned}
      &  \exp \left(\frac{-1}{1 - |\mb{x}|^2}\right) \quad &&\text{if} \quad |\mb{x}| \le
      1, \\
      & 0 \quad &&\text{if} \quad |\mb{x}| >1.
    \end{aligned} \right.
  $$

  Fix $\alpha>0$ as above, and let $\mu>\alpha$ be given. For any $w\in W_{-\alpha}$, define
  \begin{equation}
    \label{cinfapprox}
    w_{\epsilon, \mu} := ( \varphi_\epsilon \ast w ) (1-\chi_{2\epsilon}) + (
    \varphi_\epsilon \ast \absv{x}^\mu ) \chi_\epsilon,  
  \end{equation}
  which is in $C^\infty(\Omega)$. We observe that $
  w_{\epsilon,\mu}\equiv \phi_\epsilon \ast w$ outside the ball of radius
  $4\epsilon$ centered at the origin. Within this ball,
  $w_{\epsilon,\mu}$ behaves as $|x|^\mu$ as $x\rightarrow 0$ (see
  Figure \ref{fig:proof-schematic}). We now show that
  $w_{\epsilon,\mu}\rightarrow w$ as $\epsilon\rightarrow 0$.
  
  \begin{figure}[htp]
    \centering
    \includegraphics[height=4cm,clip=true, trim = 2cm 7cm 4cm 7cm]{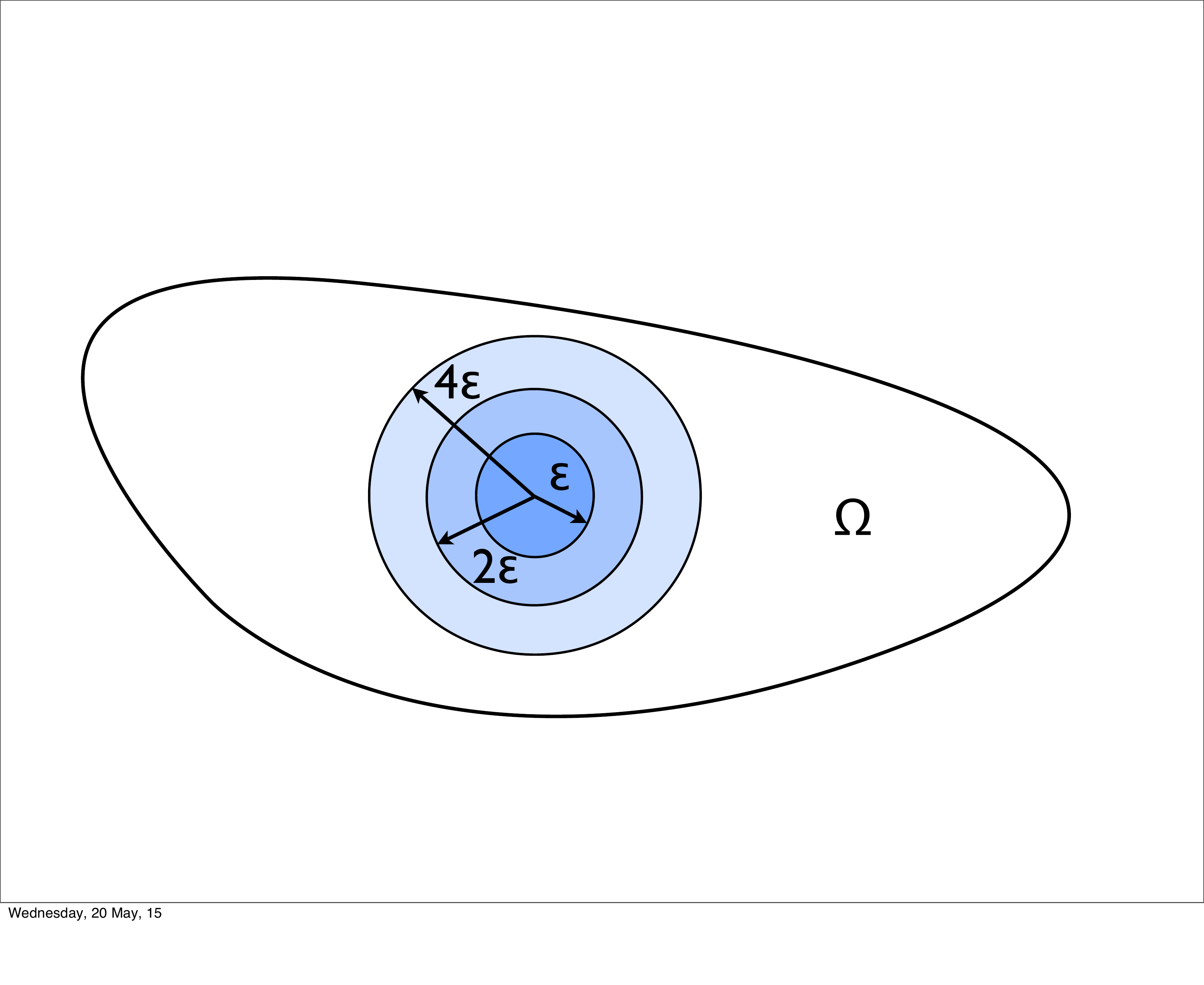}
    \caption{A Schematic showing the different sections of the approximation $w_{\epsilon, \mu}$ in equation \eqref{cinfapprox}.}
    \label{fig:proof-schematic}
  \end{figure}
 
  \begin{proposition}
    Let $n \ge 2$. For all $\mu > \alpha$ with $\frac{n}{2}-1<\alpha<\frac{n}{2}$, we
    have that $\norm{w_{\epsilon, \mu}-w}{W_{-\alpha}} \to 0$ as $\epsilon
    \to 0$.
  \end{proposition}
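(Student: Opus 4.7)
My proof plan is to estimate $\|\nabla(w_{\epsilon,\mu}-w)\|_{L^2_{-\alpha}}$ (which is equivalent to the full $W_{-\alpha}$ norm by the Poincaré inequality stated earlier) by splitting the domain of integration into three zones that reflect the piecewise definition of $w_{\epsilon,\mu}$: the outer region $\Omega_{\mathrm{out}}=\{|x|>4\epsilon\}$ where $1-\chi_{2\epsilon}\equiv 1$ and $\chi_\epsilon\equiv 0$, the inner region $\Omega_{\mathrm{in}}=\{|x|<\epsilon\}$ where $\chi_\epsilon\equiv 1$ and $1-\chi_{2\epsilon}\equiv 0$, and the annular transition region $\Omega_{\mathrm{ann}}=\{\epsilon<|x|<4\epsilon\}$ where the cutoffs and their gradients are non-trivial. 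Applying the product rule gives
\begin{equation*}
  \nabla w_{\epsilon,\mu} = (1-\chi_{2\epsilon})\nabla(\varphi_\epsilon\ast w)-(\varphi_\epsilon\ast w)\nabla\chi_{2\epsilon}+\chi_\epsilon\nabla(\varphi_\epsilon\ast|x|^\mu)+(\varphi_\epsilon\ast|x|^\mu)\nabla\chi_\epsilon,
\end{equation*}
so the estimate decouples into a mollification error, two pointwise error terms, and two cutoff-derivative terms to be controlled on $\Omega_{\mathrm{ann}}$.

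On $\Omega_{\mathrm{out}}$, $w_{\epsilon,\mu}=\varphi_\epsilon\ast w$, and I would invoke the fact that $|x|^{-2\alpha}$ is a Muckenhoupt $A_2$ weight for $0<\alpha<n/2$; standard mollifier convergence in weighted $L^2$ then yields $\nabla(\varphi_\epsilon\ast w)=\varphi_\epsilon\ast\nabla w\to\nabla w$ in $L^2_{-\alpha}(\Omega_{\mathrm{out}})$. On $\Omega_{\mathrm{in}}$, using the distributional identity $\nabla|x|^\mu=\mu|x|^{\mu-2}x$, a straightforward pointwise bound gives
\begin{equation*}
  |\nabla(\varphi_\epsilon\ast|x|^\mu)(x)|\;\lesssim\;\varphi_\epsilon\ast|y|^{\mu-1}\;\lesssim\;\epsilon^{\mu-1}\qquad\text{for }|x|<\epsilon,
\end{equation*}
so that $\int_{\Omega_{\mathrm{in}}}|\nabla(\varphi_\epsilon\ast|x|^\mu)|^2|x|^{-2\alpha}\,dx\lesssim\epsilon^{2(\mu-\alpha)+(n-2)}$, which tends to $0$ because $\mu>\alpha$ and $n\ge 2$. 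The matching term $\int_{\Omega_{\mathrm{in}}}|\nabla w|^2|x|^{-2\alpha}\,dx$ vanishes as $\epsilon\to 0$ by absolute continuity of the integral, since $\nabla w\in L^2_{-\alpha}(\Omega)$.

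The genuine obstacle is the annular region, specifically the term $(\varphi_\epsilon\ast w)\nabla\chi_{2\epsilon}$, since $|\nabla\chi_{2\epsilon}|\lesssim 1/\epsilon$ blows up while $w$ need not be bounded near the origin. The plan is to absorb the cutoff-derivative factor into the weight: using $|x|\sim\epsilon$ on $\Omega_{\mathrm{ann}}$,
\begin{equation*}
  \int_{\Omega_{\mathrm{ann}}}|\varphi_\epsilon\ast w|^2|\nabla\chi_{2\epsilon}|^2|x|^{-2\alpha}\,dx\;\lesssim\;\int_{\Omega_{\mathrm{ann}}}|\varphi_\epsilon\ast w|^2|x|^{-2\alpha-2}\,dx,
\end{equation*}
and then invoke the weighted Hardy inequality $\int|u|^2|x|^{-2\alpha-2}\lesssim\int|\nabla u|^2|x|^{-2\alpha}$ (valid in the range $\tfrac{n}{2}-1<\alpha<\tfrac{n}{2}$ of Definition~2) together with continuity of mollification in $L^2_{-\alpha}$ to dominate this by $\int_{|y|<5\epsilon}|\nabla w|^2|y|^{-2\alpha}\,dy\to 0$. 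The symmetric term $(\varphi_\epsilon\ast|x|^\mu)\nabla\chi_\epsilon$ is handled analogously but more easily: since $|\varphi_\epsilon\ast|x|^\mu|\lesssim\epsilon^\mu$ on $\Omega_{\mathrm{ann}}$, its contribution is bounded by a constant multiple of $\epsilon^{2\mu-2+n-2\alpha}=\epsilon^{(n-2)+2(\mu-\alpha)}\to 0$. Combining these estimates across the three regions and appealing to the triangle inequality gives $\|\nabla(w_{\epsilon,\mu}-w)\|_{L^2_{-\alpha}}\to 0$, which is the desired conclusion.
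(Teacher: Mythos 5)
Your decomposition into outer, inner, and annular regions, the product-rule expansion, and the estimates on the outer and inner pieces all run parallel to the paper's own proof (which splits $\Omega$ into $\Omega\setminus B(0,4\epsilon)$, two annuli, and $B(0,\epsilon)$, and controls the cutoff derivatives via \eqref{cutoffbounds}). The two arguments part ways exactly at the term you yourself single out as ``the genuine obstacle,'' and there your step fails. The weighted Hardy inequality $\int |u|^2|x|^{-2\alpha-2}\,dx \lesssim \int|\nabla u|^2|x|^{-2\alpha}\,dx$ is \emph{not} valid in the range $\frac{n}{2}-1<\alpha<\frac{n}{2}$ for functions that do not vanish at the origin: since $2\alpha+2>n$ there, the weight $|x|^{-2\alpha-2}$ is not locally integrable, so taking $u\equiv 1$ near the origin (which lies in $W_{-\alpha}$ because $2\alpha<n$) makes the left side infinite while the right side is finite. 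The inequality in the form you quote requires $\alpha<\frac{n}{2}-1$, i.e.\ precisely the opposite of the standing hypothesis; this is no accident, since $\delta\in(W_{-\alpha})^\ast$ in the present range means point values cannot be controlled by the gradient alone. Restricting the integral to the annulus does not rescue the step: if $w(0)\neq 0$ then $\varphi_\epsilon\ast w\approx w(0)$ on $\{\epsilon<|x|<4\epsilon\}$, so $\int_{\Omega_{\mathrm{ann}}}|\varphi_\epsilon\ast w|^2|x|^{-2\alpha-2}\,dx\gtrsim |w(0)|^2\,\epsilon^{\,n-2\alpha-2}\to\infty$, whereas your proposed majorant $\int_{|y|<5\epsilon}|\nabla w|^2|y|^{-2\alpha}\,dy$ tends to zero; the claimed domination cannot hold with a constant uniform in $\epsilon$.

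The missing idea is that in this weight range one must subtract a constant before applying any Hardy/Poincar\'e-type inequality. The paper adds and subtracts the weighted mean $\overline{\varphi_\epsilon\ast w}$ over $B(0,4\epsilon)$ and invokes the weighted Poincar\'e inequality (Theorem~4.5 of \cite{Morin}), $\norm{v-\bar v}{L^2_{-\alpha}(B(0,4\epsilon))}\lesssim \epsilon\,\norm{\nabla v}{L^2_{-\alpha}(B(0,4\epsilon))}$, which is exactly the mean-zero version of your inequality and does cancel the factor $\epsilon^{-1}$ coming from $\nabla\chi_{2\epsilon}$; one is then left with a separate constant contribution $\epsilon^{-1}\norm{\overline{\varphi_\epsilon\ast w}}{L^2_{-\alpha}(B(0,4\epsilon))}$, which is where the behaviour of $w$ at the origin enters and which your route skips entirely. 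Your treatment of the outer region (via the $A_2$ property of $|x|^{-2\alpha}$), of the inner region, and of the $(\varphi_\epsilon\ast|x|^\mu)\nabla\chi_\epsilon$ term are all sound and reproduce the scaling $\epsilon^{2(\mu-\alpha)+n-2}\to 0$; the proof breaks only, but decisively, at the Hardy step.
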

  \begin{proof}
    Using the definition of the norm and \eqref{cinfapprox}, we can expand
    $ \norm{w_{\epsilon, \mu} - w}{W_{-\alpha}(\Omega)}$ in terms of
    contributions in three subdomains:
    \begin{multline}
      \label{prop1eq1}
      \norm{w_{\epsilon, \mu} - w}{W_{-\alpha}(\Omega)}^2 =  
      \norm{ \nabla ( \varphi_\epsilon \ast w - w)
      }{L^2_{-\alpha}(\Omega \setminus B(0,4\epsilon))}^2
      \\ + \norm{ \nabla ( (\varphi_\epsilon \ast w)(1-\chi_{2\epsilon})
        - w)}{L^2_{-\alpha}(
        B(0,4\epsilon)\setminus B(0,2\epsilon))}^2 +
      \norm{ \nabla ( (\varphi_\epsilon \ast |x|^\mu)\chi_\epsilon - w)}{L^2_{-\alpha}(
        B(0,2\epsilon)\setminus B(0,\epsilon))}^2 + \\ \norm{ \nabla (
        \varphi_\epsilon \ast \absv{x}^\mu  - w )}{L^2_{-\alpha}
        B(0,\epsilon))}^2 .
    \end{multline}
    The first term vanishes since $\varphi \ast w \rightarrow w$ as
    $\epsilon \rightarrow 0$ outside $B(0,4\epsilon)$, while the last
    term also vanishes since both $\absv{x}^\mu$ and $w$ belong to
    $W_{-\alpha}(\Omega)$.  Before treating the remaining terms, we first
    recall the following bounds (see Theorem 3.6 in \cite{mclean}):
    \begin{equation}
      \label{cutoffbounds}
      \absv{\partial^k \chi_\epsilon} \lesssim \epsilon^{-\absv{k}} 
      \qquad \text{and} \qquad
      \absv{ \partial^k \varphi_{\epsilon}} \lesssim \epsilon^{-\absv{k}} 
    \end{equation}
    for all multi-indices $k$. Let us also define $S(4\epsilon):=
    B(0,4\epsilon) \setminus B(0,2\epsilon)$, Now using the triangle
    inequality for the second term in \eqref{prop1eq1} we get
    \begin{multline*}
      \norm{ \nabla ( (\varphi_\epsilon \ast w)(1-\chi_{2\epsilon}) - w)}{L^2_{-\alpha}(
        S(4\epsilon))}  \\ \le \norm{  (\varphi_\epsilon \ast \nabla
        w)(1-\chi_{2\epsilon}) - \nabla w}{L^2_{-\alpha}(
        S(4\epsilon))} + \norm{  (\varphi_\epsilon \ast
        w) \nabla (1-\chi_{2\epsilon}) }{L^2_{-\alpha}(
        S(4\epsilon))} 
    \end{multline*}
    We then use \eqref{cutoffbounds} and add and subtract the mean
    $\overline{\varphi_\epsilon \ast w} := (4\epsilon)^{-1} \int_{B(0,4\epsilon)}
    (\varphi_\epsilon \ast w ) |x|^{-\alpha} dx$ to get 
    \begin{multline*}    \label{prop1secondterm}
      \norm{ \nabla ( (\varphi_\epsilon \ast w)(1-\chi_{2\epsilon}) - w)}{L^2_{-\alpha}(
        S(4\epsilon))}\\
      \le \norm{  (\varphi_\epsilon \ast \nabla
        w) - \nabla w}{L^2_{-\alpha}(
        S(4\epsilon))} + \epsilon^{-1} \norm{\varphi_\epsilon \ast w - \overline{\varphi_\epsilon \ast w} }{L^2_{-\alpha}(
        B(0,4\epsilon))} + \epsilon^{-1} \norm{\overline{\varphi_\epsilon
          \ast w}}{L^2_{-\alpha}(
        B(0,4\epsilon))}.
    \end{multline*}
    The first term vanishes due to density of test functions, the
    second term vanishes following the weighted Poincar\'e inequality (Theorem~4.5 in~\cite{Morin}) and the third
    term also vanishes trivially. A similar argument (without adding
    and subtracting the weighted mean as above) shows that the
    third term in \eqref{prop1eq1} will also vanish as $\epsilon \to
    0$.
  \end{proof} }

Having established the density in $W_{-\alpha}$ of approximations of
type~\eqref{cinfapprox}, we now derive a bound for the error
$\big\|\delta - \tdelta_H\big\|_{(W_{-\alpha})^\ast}$. For a fixed $\mu$
consider the approximating sequence $w_{\epsilon, \mu} \to w$ for $w \in
W_{-\alpha}$. {Choose a {$\delta_H \in H^s_0(\Omega)$} which is compactly
supported in a ball of size $H>0$ around the origin, and let $\tdelta_H \in (H^s_0)^\ast \cap (W_{-\alpha})^\ast$
be the associated linear functional via \eqref{rieszrep}}. Then we have by
Taylor's theorem that
{\begin{align*}
  \absv{
    \frac{(\tdelta_H -
      \delta)(w_{\epsilon,\mu})}{\|w_{\epsilon,\mu}\|_{W_{-\alpha}}}} 
  = \Bigg| w_{\epsilon,\mu}(0)
  \innerprod{\delta_H}{\IND{\Omega}}{\Omega} 
  & + \sum_{1 \le \absv{\alpha}\le m} 
  \frac{\partial^\alpha w_{\epsilon,\mu} (0)}{\alpha !}
  \innerprod{\delta_H}{\mb{x}^\alpha }
  {\Omega}
  + \bigg( \delta_H, \sum_{|\beta|=m+1}R_\beta(\mathbf{y})
  (\mb{x})^\alpha \bigg)_\Omega \Bigg|, \nonumber
\end{align*} }
where $R_\beta$ is the Taylor remainder of $w_{\epsilon,\mu}$.  If we
choose $\delta_H$ to satisfy the continuous moment conditions
\begin{equation}
    \innerprod{\delta_H}{ \IND{\Omega} }{\Omega} = 1
    \qquad \text{and} \qquad 
    \innerprod{\delta_H}{\mb{x}^\alpha } {\Omega} = 0 
    \qquad \text{for } 1 \le \absv{\alpha} <m,  
\end{equation} 
up to order $m$, then it follows that
 $ \absv{(\delta - \tdelta_H)(w_{\epsilon,\mu})} \lesssim H^{m+1} \sup 
  \absv{ \partial^{m+1} w_{\epsilon,\mu}}.$
But from \eqref{cinfapprox} and \eqref{cutoffbounds} we see that
\begin{equation}
  \label{derivativebound}
  \absv{\partial^{m+1} w_{\epsilon,\mu}} \leq 
  C(w) \epsilon^{\mu - m - 1},   
\end{equation}
where $C(w)>0$ depends on $w$, and so
\begin{equation*}
  \absv{(\delta - \tdelta_H)(w_{\epsilon,\mu})} \leq  H^{m+1} C(w)
  \epsilon^{\mu - m -1}. 
\end{equation*}

In applications, the parameter $\epsilon$ can be { informally} identified with the PDE
discretization parameter $h$, and the support of $\delta_H$ is varied
with $h$.  For example, consider a specific relationship between the
support of the regularized delta and $\epsilon$, letting $H =
\bigO(\epsilon^\beta)$ for fixed $\beta>0$. Since
$\inf_{\mu>\alpha}H^{\frac{1}{\beta}(\mu + (\beta-1)( m +1))} =
H^{\frac{1}{\beta}(\alpha + (\beta-1)( m +1))}$, we have the bound
\begin{equation}
  \label{uniformboundedness}
  \absv{(\delta - \tdelta_H)(w_{\epsilon,\mu})} \le C(w,
  \mu)H^{\frac{1}{\beta}(\alpha + (\beta-1)( m +1))}, 
\end{equation}
where $C(w,\mu)$ depends on $w$ and $\mu$ but not on $H$.  Applying the
uniform boundedness principle (Lemma 2.3 in~\cite{Brezis}) gives
\begin{equation}
  \label{uniformboundedness2}
  \begin{aligned}
   \lim\limits_{H\rightarrow0} \absv{(\delta - \tdelta_H)(w_{\epsilon,\mu})} 
     \leq C(w,\alpha)
    \liminf\limits_{H\rightarrow0} H^{\frac{1}{\beta}(\alpha + (\beta
      -1)(m+1))}.
  \end{aligned}
\end{equation}
From this result, we can see that if $\tdelta_H$ satisfies the moment
conditions, then $\tdelta_H\rightarrow \delta$ at
$\bigO(H^{\frac{1}{\beta}(\alpha + (\beta -1)(m+1))})$ in the
$W_{-\alpha}^\ast$ norm.  It is important to note that this is a {\it
  different rate} than that obtained in the weak-$\ast$ sense.

Estimate \eqref{uniformboundedness} is a central result of this
paper. It allows us to precisely quantify the interplay between the
support $H$ of the regularized delta distribution, and the mesh size $h$
of the PDE discretization. One can take $\epsilon\equiv h$ to be of the
same order as the resolution of the numerical scheme used to solve {\it
  Problem~2}; then the case $\beta <1$ corresponds to taking the
support of $\delta_H$ much bigger than the spatial resolution of the
PDE. From \eqref{uniformboundedness} we see immediately that
convergence of $\tdelta_H\rightarrow \delta$ is poor in this case.  The case when
$\beta \approx 1$ is the most interesting one, wherein the support of
$\delta_H$ and the mesh spacing $h$ are comparable in size,
corresponding to the approach most commonly used in practical
applications \cite{beyer-leveque, tornberg-engq, tornberg}. Then we
expect to obtain order $\alpha$ convergence regardless of how many
moments $m$ are satisfied. We provide numerical evidence of this
convergence rate in Section~\ref{Numerics:weighted-sobolev}. The case
$\beta>1$ can lead to improved convergence as long as $\delta_H$
satisfies moment conditions for large enough $m$.  This situation is
difficult to observe in practice because it corresponds to the support
of the regularization $H\ll h$. Hence, if we use a quadrature rule based
on the PDE mesh size $h$, the quadrature error will typically overwhelm
the calculation. We emphasize that in this setting, the error
$\|u-u_H\|_X$ due to {\it regularization} would be well-controlled in
any reasonable norm, and it is the {\it discretization error}
$\|u_H-u_{H,h}\|$ that dominates due to a poor choice of quadrature.

\subsection{Moment conditions}
\label{subsection:moment-conditions}

We have just seen that if $\delta_H$ is restricted to be a sufficiently
smooth and integrable function and is compactly supported inside a set
of diameter less than $H$, then satisfying the moment conditions
\eqref{momentsgeneral} will lead to a $\bigO(H^{m+1})$ rate of
convergence for $\tdelta_H\rightarrow \delta$ in the weak-$\ast$ sense,
and $\bigO(H^{\frac{1}{\beta}(\alpha + (\beta -1)(m+1))})$ in the
$W_{-\alpha}^\ast$ norm, with the latter estimate degenerating as
$\alpha \rightarrow \frac{n}{2}-1$. The number $m$ of moment conditions
satisfied depends on the (problem-specific) choice of smoothness
parameter $s$. If one uses a weighted Sobolev norm, this choice is
related to the weight $\alpha$.

In this subsection, we recall important features of the continuous moment
problem that will be employed in our constructions. We make concrete the
class of regularizations we are concerned with.

\begin{definition}
  \label{compactmomentconditions} 
  Let $\delta_H\in \H^s_0(\Omega)$ and $\tdelta_H\in \H^{-s}$,
  defined as in \eqref{rieszrep}, be a regularization of $\delta$.
  We say that $\tdelta_H$
  is a distribution satisfying \emph{compact $m$-moment conditions} if
  and only if $\delta_H\in \H_0^s(\Omega)$ satisfies the following
  conditions: $\delta_H$ is compactly supported in a ball
  $B(0,H)\subset \Omega$ and
  \begin{subequations}
    \label{momenttrunc}
    \begin{align}
      \label{momenttrunc1}
      \left(\delta_H,\IND{\Omega}\right)_{\Omega}
      &= \innerprod{\delta_H}{\IND{\Omega} }{B(0,H)} = 1, \\
      \label{momenttrunc2}
      \innerprod{\delta_H}{\mb{x}^\alpha }{\Omega} 
      &= \innerprod{\delta_H}{\mb{x}^\alpha }{B(0,H)} = 0, 
    \end{align}
  \end{subequations}
  for $1 \le \absv{\alpha} \le m$.
\end{definition}%

As we have already seen in \eqref{momentsgeneral}, if a regularization
$\tdelta_H$ satisfies the compact $m$-moment condition for some fixed $m
\ge 1$, then from \eqref{deltaerror} we have that $ \absv{\delta(\phi)-
  \tdelta_H(\phi)} \le C(\phi, m) H^{m+1}.$
This shows that satisfying a larger number $m$ of
compact $m$-moment conditions implies weak-$\ast$ convergence
(convergence in the sense of distributions) of higher order as $H \to
0$. A similar result holds for convergence in the $W_{-\alpha}^\ast$
norm for $\beta >1$.

The problem of identifying a $\tdelta_H$ that satisfies the compact
$m$-moment conditions for fixed $m \in \mathbb{N}$ and $H>0$ lies in a
well-known problem class that we define next.
\begin{definition}{\bf (Finite dimensional moment
    problem~\cite[p.~141]{kabanikhin})} Let $\mathcal{X}$ be a Hilbert
  space and fix $m\in \mathbb{N}$.  Given a set of linearly independent
  functions $\{\varphi_i\}_{i=0}^m$ in $\mathcal{X}$ and scalars $c_i$ for
  $i=0,\dots, m$, find the element $q \in \mathcal{X}$ such that
  \begin{equation}
    \label{momentproblem}
    \innerprod{q}{\varphi_i}{\mathcal{X}} = c_i \qquad \text{for } i
    = 0,\dots,m. 
  \end{equation}
\end{definition}%
Clearly, the existence of solutions to the finite dimensional moment
problem depends on the choice of $\{\varphi_i\}_{i=0}^m$, for which 
solvability is well-studied~\cite{krein, kabanikhin}.
Suppose that $\{\varphi_i\}_{i=0}^m$ form a linearly independent set in
a Hilbert space $\mathcal{X}$, and suppose further that
$\{\psi_k\}_{k=0}^\infty$ form a Riesz basis of unit vectors in
$\mathcal{X}$. Moreover, let $\Span\{\varphi_i\}_{i=0}^{m} = \Span \{
\psi_{k}\}_{k=0}^m$. Then, any $\bar{q} \in \mathcal{X}$ of the form
$\bar{q} = \sum_{j=0}^m \beta_j \psi_j$
is a solution to the finite moment problem provided
that $\beta_j$ for $j=0,\dots,m$ solve the linear system 
\begin{equation} 
  \label{linearsysmoments}
  \sum_{i=0}^m \innerprod{ \varphi_i}{\psi_j}{\mathcal{X}} \beta_j =
  c_i \qquad \text{for } i=0,\dots,m.
\end{equation}
If $\mathcal{X}$ is infinite dimensional then the solution $\bar{q}\in
\mathcal{X}$ is not unique because 
\begin{equation}
  \label{nonuniqueness}
  \tilde{q} = \bar{q} + \beta_{j+1} \psi_{j+1}
\end{equation}
also satisfies the finite moment problem if $\psi_{j+1}$ is
orthogonal to $\Span \{ \psi_0, \dots \psi_m\}$.

We solve the finite dimensional moment problem with $\mathcal{X}=L^2(\Omega)$.
We emphasize that \eqref{momenttrunc} are the {\it continuous} moment
conditions.
 One
can replace the $L^2$ inner products in \eqref{momentproblem} by 
suitable quadrature rules and thus obtain analogous {\it discrete}
moment conditions. However, the use of the continuous moment conditions
allows greater flexibility in designing problem-specific regularized
approximations $\tdelta_H$.

%%%%%%%%%%%%%%%%%%%%%%%%%%%%%%%%%%%%%%%
%%%%%%%%%%%%%%%%%%%%%%%%%%%%%%%%%%%%%%%
%%%%%%%%%%%%%                                     %%%%%%%%%%%%%%
%%%%%%%%%%%%%         Section 3                   %%%%%%%%%%%%%%
%%%%%%%%%%%%%                                     %%%%%%%%%%%%%%
%%%%%%%%%%%%%%%%%%%%%%%%%%%%%%%%%%%%%%%
%%%%%%%%%%%%%%%%%%%%%%%%%%%%%%%%%%%%%%%

%\input{Section3}
\section{High-order approximations to the delta distribution} 
\label{section:regularizations}

The previous discussions provide a unified framework within which to
construct regularizations $\tdelta_H$. We will use \eqref{momenttrunc}
and the solvability of
the finite moment problem to construct $\delta_H\in \H^s_0(\Omega)$,
and use \eqref{rieszrep} to obtain $\tdelta_H\in \H^{-s}(\Omega)$ that satisfy the
compact $m$-moment conditions. If needed, we can take advantage of the
non-uniqueness demonstrated in \eqref{nonuniqueness} to ensure that
$\delta_H\in \H^{-s}(\Omega)$. Concretely, for some given instance of
{\em Problem~1}, we need to perform the following steps:
\begin{itemize}
\item Decide on a regularity index $s>0$ appropriate for the specific
  problem. Then fix $m\in \mathbb{N}$ to determine the approximation
  order for $\tdelta_H$.
\item Pick a domain $\Omega_H \subset \Omega$ with diameter $H$ that
  contains the origin.
\item Pick a convenient orthonormal basis $\{\psi_k\}_{k} \in
  L^2(\Omega_H)$. For example, if $\Omega_H$ is an interval then one may
  use a basis of polynomials or trigonometric functions.
\item Set $\delta_H = \sum_{j=0}^p \beta_j \psi_j$ with $p>m$.
\item Solve the linear system consisting of
  \begin{equation}
    \begin{aligned}
      \sum \beta_k \innerprod{\psi_k}{ \IND{\Omega} }{\Omega}  = 1
      \qquad \text{and} \qquad
      \sum \beta_k \innerprod{\psi_k}{\mb{x}^\alpha }{\Omega} = 0,
    \end{aligned}
  \end{equation} 
  for $1 \le \absv{\alpha} <m$, with possibly added constraints to
  ensure that $\delta_H$ has the desired regularity as a function on
  $\Omega$.
\end{itemize}
Additional symmetry constraints may also be incorporated into this
construction. In the next two subsections, we examine the
following two classes of regularizations in $\mathbb{R}^n$:
{\it radially symmetric} and {\it tensor-product} distributions. The latter
possess mirror symmetry across the Cartesian axes.  For both classes,
we show how simple ideas underlying the solution of the continuous
finite moment problem can be used to construct various polynomial
and trigonometric regularizations that satisfy the compact $m$-moment
conditions. It is clear that for both classes, if $\tdelta_H$ satisfies
the compact $2m$-moment condition, it automatically satisfies the
compact $(2m+1)$-moment conditions due to symmetry.

\subsection{Radially symmetric approximations}

The delta distribution $\delta$ is radially symmetric and so it is
natural to seek regularizations $\tdelta_H $ that are also radially
symmetric, as well as homogeneous in the following sense.  Let
$\delta_H$ be defined as
\begin{equation}
  \label{scaleddelta}
  \delta_H(\mb{x}) = \begin{cases}
    \displaystyle
    \frac{1}{H^n} \, \eta_{m} (\mb{x}/H), & \text{for } \mb{x} \in B(0,H),\\[0.1cm]
    0, & \text{otherwise},
  \end{cases}
\end{equation}
where $\eta_m(\mb{z}) : B(0,1) \rightarrow \mathbb{R}$. Here, the
subscript $m$ denotes the number of moment conditions satisfied by
$\delta_H$.  We will need to add additional conditions on $\eta_m$ to
ensure $\delta_H$ has certain desirable continuity properties; for
example, that $\delta_H$ belongs to $\mathcal{H}^s_0(\Omega)$.

Next, we change variables by letting $\mb{x}=H\mb{z}$ so that the moment
conditions \eqref{momenttrunc} can be written in terms of the rescaled
function $\eta_m$ on the unit ball $B(0,1)$ centered at the origin
\begin{equation}
  \label{momentscaled}
    \innerprod{\eta_{m}}{ \IND{\tilde{\Omega}} }{B(0,1)} = 1,  \quad\mbox{and}\quad 
    \innerprod{\eta_m}{\mb{z}^\alpha } {B(0,1)} = 0, \quad \mbox{for}\quad 1
    \le \absv{\alpha} \le m.   
\end{equation}
where $\bar{\Omega}:= \{\mb{z}~\vert~H \mb{z} \in \Omega\}$. 
Because $\eta_m$ is radially symmetric, we can write $\eta_m(\mathbf{z})
\equiv \eta_m(|\mathbf{z}|)$ where the context is clear. Working in the
unit ball in spherical coordinates, the moment conditions
\eqref{momentscaled} reduce to the following one-dimensional integrals
\begin{equation}\label{momentsymm}
    \nu(n) \innerprod{\eta_{m}(r)}{ r^{n-1} }{r \in [0, 1]} = 1 \quad \mbox{and}\quad 
    \innerprod{\eta_{m}(r)}{r^{|\alpha|+n-1} } {r \in [0,1]} = 0 \quad 
    \text{for } 1 \le |\alpha| \le m, 
\end{equation}
where $r$ is the radial coordinate and $\nu(n)$ is the area of the unit
ball $B(0,1)$ in $\reals^n$.
The role of $\{\phi_k\}_{k=0}^m$ in the finite moment problem \eqref{momentproblem} is now
played by the monomials $r^k$.
  
We now have considerable freedom in how to solve the finite moment
problem.  Different solutions $\eta_m$ can be obtained depending on the
choice of basis functions $\{\psi_j\}$.  We can enforce additional
continuity conditions on $\eta_m$ by adding contributions from the
orthogonal complement of $\Span{\phi_k}$. We note that this freedom is
an advantage that derives directly from our working with continuous
moment conditions. We also emphasize that the support $H$ of the
regularizations can be chosen {\it independently of the mesh-size}
$h>0$. Indeed, at this juncture there is no underlying discretization
yet applied to the PDE operator.  This feature allows us to explore good
choices of $H$, an issue that will be addressed later in this section.

\subsection{Tensor-product approximations}

Another approach that has been used in the literature for constructing
approximants $\tdelta_H$ in higher dimensions is the product formula for
discrete regularizations of the delta distribution due to Peskin
\cite{peskin}.  As well as being employed in the immersed boundary
framework, this tensor-product approach has been applied more widely in
the literature~\cite{tornberg, Walden}.  We study this class of
approximations next, developing the analogous formulation for continuous
regularizations.  In particular, we aim to construct an approximation
$\tdelta_H(\mb{x})$ to the delta distribution in $\reals^n$ using
a tensor product of lower-dimensional approximations $\tdelta_H(x_i)$.
\begin{definition}
  {\bf (Tensor product of distributions \cite{Friedman})}
  Consider a test function $\phi(\xi, \zeta) \in \Dee(\reals^\ell \times
  \reals^k)$. Given two distributions $S_\xi \in \Dee^*(\reals^\ell)$ and
  $T_\zeta \in \Dee^*(\reals^k)$, their tensor product in
  $\Dee^*(\reals^\ell \times \reals^k)$ is defined as
  \begin{equation*}
    \label{tensorproddist}
    S_\xi \otimes T_\zeta (\phi) = S_\xi ( T_\zeta (\phi(\xi,\zeta))).
  \end{equation*}
  If the test function has the separable form 
  $\phi(\xi, \zeta) = \phi_1(\xi)\phi_2(\zeta),$
  then we have 
  \begin{equation*}
 S_\xi \otimes T_\zeta (\phi) = S_\xi (\phi_1) T_\zeta (\phi_2) =
    T_\zeta \otimes S_\xi (\phi). 
  \end{equation*}
\end{definition}%
Based on this definition, a {tensor-product approximation} $\tdelta_H$
of $\delta$ has the form
\begin{equation*}
  \label{generaltensorproddelta}
  \tdelta_{H}(\mb{x}) := \tdelta_H(x_1) \otimes \tdelta_H(x_2)
  \otimes \cdots \otimes \tdelta_H( x_n) , 
\end{equation*}
where $\tdelta_H(x_k)$ are 1-D regularizations of the delta distribution
in $\mathbb{R}$.  We require that each 1D approximation $\tdelta_H$
satisfies the compact $m$-moment conditions, with the associated
$\delta_H$ being supported
on $[-H,H]$. The tensor product $\tdelta_H(\mb{x})$ is therefore
supported on the hypercube $[-H,H]^n$.

The problem of constructing the tensor product regularizations has now
been reduced to that of finding a 1D regularization, which can be done
in many ways. For example, one could construct an even function
$\tdelta_H(x)$ on $[-H,H]$ satisfying the compact $m$-moment
conditions, which will converge to the 1D $\delta$ with order
$\bigO(H^{m+1})$.  Taking a tensor product of these leads to a
regularization which we might hope converges to $\delta$ in
$\mathbb{R}^n$.  Unfortunately, since $[-H, H]^n \not \subset
B(0,H;\reals^n)$, the product of such distributions does {\it not}
satisfy even the compact 0-moment condition in \eqref{momenttrunc} if we
view this as a distribution in $\mathbb{R}^n$.
 
One remedy for this problem is to take the 1D approximations to have
support on a smaller interval $[-\tilde{h}, \tilde{h}]$ so that
$[-\tilde{h}, \tilde{h}]^n \subset B(0,H;\reals^n)$. For example, to
construct a tensor-product approximation in two dimensions ($n=2$) we
could simply take $\tilde{h} = H/\sqrt{2}$ so that the square
$[-\tilde{h}, \tilde{h}]^2$ fits inside the ball of radius $H$. Another
solution is to redefine the compact $m$-moment conditions in
\eqref{momenttrunc} so that they hold on the hypercube $[-H,H]^n$.
 
Another potential issue is more problematic: because the tensor product
construction is based on a Cartesian frame, the orientation of the
underlying grid will affect the approximant $\tdelta_H$.  In
Section~\ref{Numerics:wave-2D} we shall see instances where this feature
of a tensor-product distribution leads to unwanted numerical errors,
particularly when solving hyperbolic PDEs.  We emphasize that
this issue arises with {\it any} tensor-product approximation: one must
be careful of the support size when constructing regularizations of the
delta distribution out of lower-dimensional approximations. Suppose that
$\tdelta_H(x_1,\dots, x_m)$ is an approximation in $\reals^m$ supported
on $B(0,H;\reals^m)$, while $\tdelta_H(x_{m+2}, \dots , x_n) $ is an
approximation in $\reals^{n-m}$ that is supported on
$B(0,H;\reals^{n-m})$.  Then the tensor product $\bar{\delta}_H(\mb{x})
:= \delta_H(x_1, \dots, x_m) \otimes \delta_H(x_{m+1}, \dots, x_n)$ is
supported on $B(0,Hh;\reals^m) \times B(0,H;\reals^{n-m}) \not \subset
B(0, H;\reals^n)$.
 
{ So far our discussion indicates that radially symmetric
  and tensor product approximations should yield the same rates of
  convergence in the weak-$\ast$ and weighted Sobolev topologies
  assuming that the definitions of the moment conditions are
  consistent. However, in \cite{tornberg} it was
  observed that the discrete radial approximations can lead to lower order
  numerical convergence. It is clear that this unexpected error is due
to the choice of the quadrature method. Integrating a radially
symmetric regularization on a Cartesian grid can lead to large
numerical errors. Similarly, integrating a tensor product
approximation on a radial grid is likely to be inaccurate. Then in
practical applications the
choice of the regularization depends on the grid, the quadrature
method and also the PDE operator (see section 4.3.1 below).}

\subsection{Examples}

We now illustrate the preceding ideas through a number of concrete
examples.  It will prove useful to introduce some new notation, denoting
by $\eta_{m,p}(r)$ a polynomial of degree $p$ in $r\in [0,1]$ that
satisfies the finite moment problem \eqref{momentsymm} with $m$
conditions.
 
\subsubsection{Radially symmetric polynomial regularizations} 
\label{Section:etadefinitions}

We begin with the simplest case, namely that of the radially symmetric
regularizations satisfying the compact $m$-moment conditions. From
\eqref{scaleddelta}, we aim to find $\delta_H$ of the form
$\delta_H({\mathbf x}) = \eta_{m,p}(|\mathbf{x}|/H)/{H^n}$ for
$|\mathbf{x}|<H$, and equal to 0 in the rest of $\Omega$.
Problem~\eqref{momentsymm} is a finite moment problem with $\phi_k=r^k$
for $k=1, \dots, m+1$ and $r\in[0,1]$.  Therefore, a good choice of
basis $\psi_k$ is provided by the {\it shifted Legendre polynomials}
on $[0,1]$:
\begin{equation*}
    \label{legendrebasis}
    P_k (r) = (-1)^k \sqrt{2k+1}\; \sum_{j=0}^k \binom{k}{j}
    \binom{k+j}{j} \left(-r\right)^j,
\end{equation*}
which are $L^2-$orthonormal on $[0,H]$. Since we want $\eta_{m,p}(r)$ to
be a polynomial of maximal degree $p$, it must have a {\it finite}
expansion $\eta_{m,p}(r) \equiv \eta_m(r) = \sum_{j=1}^{p}
\beta_jP_j(r)$ in the shifted Legendre polynomials $\{P_k(r)\}_{k=1}^m$.
Then by orthogonality of the polynomials we can reduce
the problem \eqref{momentsymm} to finding $\beta_j$ such that
\begin{align}
  \label{reducedmomentsys}  
  \sum_{j=1}^p \beta_j \innerprod{r^{ n -1}}{P_j}{B(0,1)} = \frac{1}{\nu(n)} 
  \qquad \text{and} \qquad 
  \sum_{j=1}^p \beta_j  \innerprod{r^{\theta + n -1}}{P_j}{B(0,1)}  = 0    \qquad \text{for }
  \theta=1,\dots,m. 
\end{align}
We require that $p=m$ for the system of \eqref{reducedmomentsys} to be
solvable and, as discussed above, the freedom to choose $p \ge m$ allows
us to impose additional smoothness conditions at $r=0$ or $1$.  For
example, if we wanted $\delta_H \in \H_0^s(\Omega)$ for some $s\in
\mathbb{N}$, then we would take $p = m+2s-1$ and append the extra
equations
\begin{align}
  \label{contcond}
  \left. \frac{\partial^k \eta_{m}}{\partial r^k} \right|_{r=1} = 0
  \;\; \text{for } k= 0,\dots,s-1
  \quad \text{and} \quad
  \left. \frac{\partial^{\ell} \eta_{m}}{\partial r^{\ell}} \right|_{r=0} =
  0 \;\; \text{for } \ell= 1,\dots,s-1. 
\end{align} 

In one dimension, the ball $B(0,H)\equiv [-H, H]$ and we seek
$\delta_H(x)$ satisfying the scaling in \eqref{scaleddelta}. Following
the discussion above, if we require a radially-symmetric, compact,
1-moment approximation in $\mathbb{R}$, then $\delta_H$ must be an even
function that satisfies
\begin{equation*}
  \label{1D-m1}
  \begin{aligned}
    \innerprod{\delta_{H}}{ \IND{}}{[-H,H]} & = 1
    \qquad \text{and} \qquad 
    \innerprod{\delta_{H}}{ x}{[-H,H]} & = 0.
  \end{aligned}
\end{equation*}
We can set $\delta_H(x) = \frac{1}{H}\, \eta_{m,p}
\left({|x|}/{H}\right)$. Two common choices for $\eta_{1,p}$ in this
class are obtained by setting the polynomial degree $p=0$ and $1$ in
\eqref{reducedmomentsys}, which yield respectively the piecewise
constant approximation and the hat function. To see this, observe that
the constant function is a zeroth-order polynomial function
$\eta_{1,0}(r)$ on $[0,1]$ that satisfies the compact 1-moment
condition, and from which $\delta_H(x)$ is obtained using
\eqref{scaleddelta}:
\begin{equation*}
  \label{piecewiseconst}
  \eta_{1,0}(r) = \frac{1}{2} \quad \text{for }  r \in [0, 1] 
  \quad \Longrightarrow \quad \delta_H(x) :=
  \begin{cases}
    \frac{1}{2H}, & |x|\leq H, \\ 
    0,            & |x|>    H.
  \end{cases}
\end{equation*}
This approximation automatically satisfies the compact 1-moment
condition due to symmetry, and so the regularization converges to
$\delta$ in the weak-$\ast$ sense at $O(H^2)$.

Similarly, suppose we seek a degree-1 polynomial approximation
$\delta_H(x)$ satisfying the compact 1-moment condition on $[-H,H]$. We
can set $\delta_H(x) = \frac{1}{H}\, \eta_{1,1}({|x|}/{H})$, where
$\eta_{1,1}(r)$ is a linear combination of shifted Legendre polynomials
$P_0(r)$ and $P_1(r)$ for $r\in [0,1]$.  We enforce continuity at $r=1$
(equivalently, $|x|=H$) by enforcing $\eta_{1,1}(1)=0$.  A polynomial
with these properties is
\begin{equation}
  \label{hat-eta}
  \eta_{1,1}(r)= \frac{1}{2}P_0(r)
  -\frac{1}{2\sqrt{3}}P_1(r).
\end{equation}
We then obtain $\delta_H(x)$ as a scaled, even extension of
$\eta_{1,1}(r)$, which is the hat function
\begin{equation*}
  \label{hatdelta}
 \delta_H(x) =
 \begin{cases}
   \frac{1}{H} \eta_{1,1} \left(\frac{|x|}{H}\right) = 
   \left(1 - \frac{x}{H}\right), & \text{if } |x|\leq H, \\
   0, & \text{if } |x|>H.
 \end{cases}
\end{equation*} 
Note that despite being continuous, the hat function only satisfies the
first moment condition from the construction of
$\eta_{1,1}(r)$. Therefore, this regularization converges to $\delta$ at
the same rate, $\bigO(H^2)$, in both the weak-$\ast$ sense and
for the piecewise constant regularization.

Proceeding analogously, we can construct regularizations that satisfy
the compact 2-moment conditions. We work once again with the scaled
distribution $\eta_{m,p}(z)$ and solve the moment equations
\eqref{reducedmomentsys} with $m=2$ and desired $p$ for $z\in [0,1]$. We
recover $\delta_H$ by using \eqref{scaleddelta}. A degree 2 polynomial
that solves \eqref{reducedmomentsys} with $m=2$ is given by
\begin{equation}
  \label{secondmomentdiscont}
  \eta_{2,2}(r) = \frac{1}{2} \left( P_0(r) - \sqrt{3}P_1(r) + 
    \sqrt{5}P_2(r) \right) = \frac{9}{2} - 18 r + 15 r^2, \qquad r\in
  [0,1]. 
\end{equation}
Note that this $\eta_{2,2}(r)$ is non-vanishing at $r=1$, which means
that the corresponding $\delta_H(x)$ is not continuous at $\pm
H$. However, exploiting the non-uniqueness of solutions of the finite
moment problem, we are free to impose an extra condition, which we do in
this case by enforcing continuity at $r=1$. This leads to a {\it cubic}
polynomial
\begin{equation}
  \label{secondmomentcont}
  \begin{aligned}
    \eta_{2,3}(r) & = \frac{1}{2} \left( P_0(r) - \sqrt{3}P_1(r) +
      \sqrt{5}P_2(r) - \frac{3}{\sqrt{7}}P_3(r)
    \right),  \\
    & = -30 r^3 + 60 r^2 - 36 r + 6,
  \end{aligned}
\end{equation}
from which we obtain $\delta_H(x) = \frac{1}{H}
\left(-30\frac{|x|^3}{H^3} + 60 (\frac{|x|}{H})^2 - 36\frac{|x|}{H}
  +6\right)$ for $|x|\leq H$ and $\delta_H(x)=0$ elsewhere.  We
emphasize the fact that the forgoing calculations are quite simple to
perform.

These and a number of other radially-symmetric regularizations in 1 and
2 space dimensions are summarized in Table~\ref{table:delta-summary}.
Some of these are well-known in the literature, but to the best of our
knowledge the other regularizations are reported here for the first
time.

\renewcommand\arraystretch{1} 
\newcolumntype{L}{>{\raggedright\arraybackslash}X}%
\begin{table}[htp]
  \centering\scriptsize
  \begin{tabularx}{1 \textwidth}{@{}| L | L | L| L| L| c| L|@{}}
    \hline
 Symbol & Dim & Type & Moment & Smoothness & Definition & Reference  \\
    \hline\hline
    $\eta_{1,0}$ & 1D & Legendre & $1$ & $L_1$ & $\frac{1}{2}$ & --  \\ \hline
    $\eta_{1,1}$ & 1D & Legendre & $1$ & $C^0$ & $1-r$ & \cite{tornberg, beyer-leveque, Walden} \\ \hline
    $\eta_{2,2}$ & 1D & Legendre & $2$ & $L_1$ & $\frac{9}{2} - 18 r + 15r^2$ & -- \\ \hline
    $\eta_{2,3}$ & 1D & Legendre & $2$ & $C^0$ & $-30 r^3 + 60r^2 - 36r + 6$ & -- \\ \hline
    $\eta_{2,5}$ & 1D & Legendre & $2$ & $C^1$ & $168 r^5 - \frac{945}{2}r^4 + 450 r^3 - 150 r^2 + \frac{9}{2}$ & -- \\ \hline
    $\eta_{1,1}$ & 2D & Legendre & $1$ & $L_1$ & $\frac{6}{\pi} (3-4r)$ & -- \\ \hline 
    $\eta_{1,2}$ & 2D & Legendre & $1$ & $C^0$ & $\frac{12}{\pi} (5 r^2 -8r +3)$ & -- \\ \hline
    $\eta_{2,2}$ & 2D & Legendre & $2$ & $L_1$ & $\frac{12}{\pi} (15 r^2 -20r +6)$ & -- \\ \hline
    $\eta_{2,3}$ & 2D & Legendre & $2$ & $C^0$ & $\frac{-60}{\pi} (7 r^3 -15r^2 +10r -2)$ & -- \\ \hline
    $\eta_{2,5}$ & 2D & Legendre & $2$ & $C^1$ & $\frac{84}{\pi} (24r^5 - 70 r^4 + 70 r^3 - 25 r^2 +1)$ & -- \\ \hline
    $\eta_{1,cos}$ & 1D & Trig. & $1$ & $C^0$ & $\frac{1}{2} (1 - \cos(\pi r))$ & 
    \cite{tornberg, tornberg-quad, beyer-leveque} \\ \hline
    $\eta_{2,cos}$ & 1D & Trig. & $2$ & $C^0$ & 
    See equation \eqref{cos-2} & -- \\ \hline
    $\eta_{2,cos}$ & 2D & Trig. & $2$ & $C^0$ &See equation \eqref{cos-2d-2}
    & -- \\ \hline
  \end{tabularx}
  \caption{Polynomials $\eta_{m,p}(r)$ of degree $p$ (first 10 rows) and
    trigonometric polynomials (last 3 rows) that satisfy
    \eqref{reducedmomentsys}.  The corresponding regularized delta in
    each case is $\delta_H(\mb{x}) := \frac{1}{H^n}\eta_{m,p}
    \left({|\mb{x}|}/{H}\right)$ for $|\mb{x}|\leq H$, with
    $\delta_H(\mb{x})=0$ elsewhere.}  
  \label{table:delta-summary}
\end{table}

\begin{figure}[htbp]
  \centering
  \includegraphics[width=0.75\textwidth]{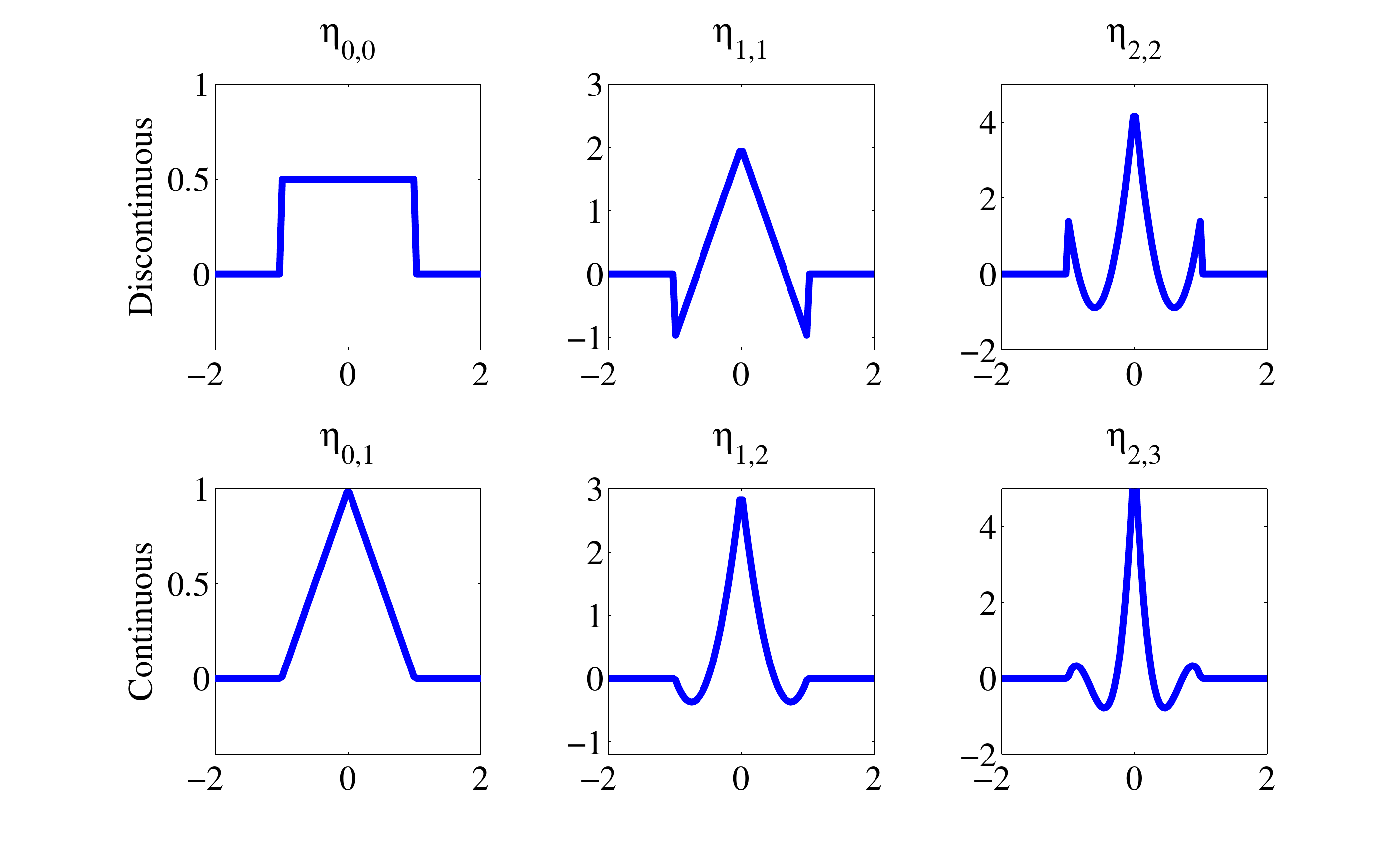} 
  \caption{Various 1D approximations of the delta distribution.}
\end{figure}

\begin{figure}[htbp]
  \centering
  \includegraphics[width=0.95\textwidth]{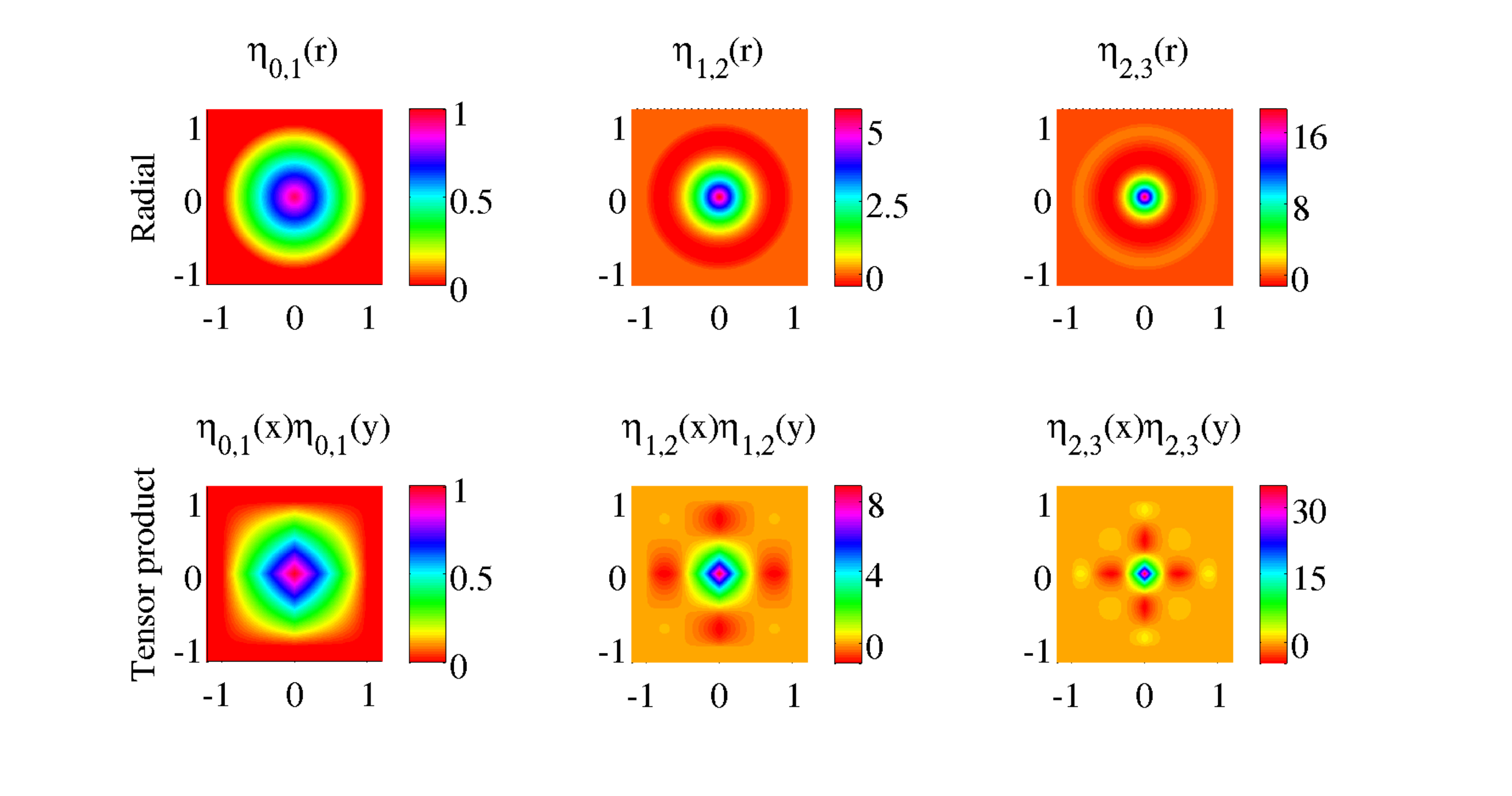} 
  \caption{Comparison of radial approximations to the delta distribution 
    with their tensor product counterparts in 2D.}
\end{figure}

\subsubsection{Trigonometric regularizations}

The shifted Legendre polynomials provide a good basis for solving the
moment problem as long as the approximations we are seeking are likewise
polynomials.  As mentioned before, the solution to the finite moment
problem is not unique and so any other orthogonal basis functions can be
used to solve the system. As an example, we can use trigonometric
polynomials and specifically the cosine basis for even functions in
$L^2(-1,1)$, taking $\{\psi_k(r)\}_k = \{\cos(k\pi r)\}_{k=0}^\infty$.
We seek a regularization of $\delta$ in terms of the scaled,
radially-symmetric functions $\eta_{m,\cos}(r)$ and stipulate that
$\eta_{m,cos}(r)$ has a finite expansion in the cosine basis with
\begin{equation}
  \label{cos-eta}
  \eta_{m,\cos} := \sum_{j=0}^{m+s-1} \beta_j \psi_j(r).
\end{equation}
Here, $s$ is the degree of regularity of the solution to be specified at
$r=1$. Note that in this case we do not need to impose any regularity
condition at the origin since the solution is already in
$C^\infty(B(0,1))$. We may then project the moment conditions onto the
trigonometric basis, similar to our approach in \eqref{reducedmomentsys}, to obtain the moment conditions
\begin{equation*}
  \label{trig-moment-sys} 
  \sum_{j=1}^p \beta_j  \left( r^{\theta + n-1}, \psi_j \right)_{B(0,1)}
  = \frac{1}{\nu(n)}
  \qquad  \text{and} \qquad
  \sum_{j=1}^p \beta_j  \left( r^{\theta + n-1}, \psi_j \right)_{B(0,1)}
  = 0
\end{equation*}
for $\theta= 1,\dots,m$.  

For instance, consider the radially-symmetric trigonometric
regularization that satisfies the compact 0-moment condition
\begin{equation}
  \label{cos-1}
  \eta_{0,\cos} = \frac{1}{2} (1 - \cos(\pi r)) \qquad \text{for } 0\leq
  r\leq 1.  H
\end{equation}
This is the well known cosine function approximation \cite{tornberg, Walden} of the delta
distribution, which automatically satisfies the compact 1-moment
condition as well because of symmetry; consequently, the regularization
$\tdelta_H := \frac{1}{2H}(1-\cos(\pi x/H))$ actually converges to
$\delta$ with rate $\bigO(H^{2})$.  A second trigonometric
regularization that is radially-symmetric and satisfies the compact
2-moment condition is
\begin{equation}
  \label{cos-2}
  \eta_{2,\cos} = \left( \frac{3}{64}\pi^2+ \frac{9}{16}   \right)
  \cos(3 \pi r) + \frac{1}{6} \pi^2 \cos (2 \pi r)+ \left(
    \frac{23}{192} \pi^2 
    - \frac{1}{16} \right) \cos(\pi r) + \frac{1}{2}.
\end{equation}

Performing the analogous calculations in 2D we obtain scaled,
radially-symmetric trigonometric regularizations that satisfy the
compact 0-moment (and 1-moment) conditions
\begin{equation}
  \label{cos-2d-1}
  \eta_{0,\cos} = \frac{2\pi}{\pi^2 - 4} \big(\cos(\pi r)+1\big),
\end{equation}
with the corresponding 2-moment approximation
\begin{multline}
  \label{cos-2d-2}
  \eta_{2,\cos} = \frac{-1}{9 \pi^4 - 104 \pi^2 + 48} \Bigg(
  \frac{81 \pi( 3 \pi^4 - 32 \pi^2 + 48)}{16} \cos(3 \pi r)\\     
  + 2\pi( 9 \pi^4 - 80 \pi^2 + 48) \cos(2 \pi r) 
  + \frac{\pi( 45 \pi^4 + 32 \pi^2 - 48)}{16} \cos(\pi r)
  + 144 \pi \Bigg).
\end{multline}

%%%%%%%%%%%%%%%%%%%%%%%%%%%
%%%%%%%%%%%Numerics for distributions%%%%%%%%%%%%
%%%%%%%%%%%%%%%%%%%%%%%%%%%%%%%%%%%

\subsection{Numerical results: Convergence of $\tdelta_H \rightarrow
  \delta$}
\subsubsection{Weak-$\ast$ convergence}

The results of Section 2.1 are based on the idea of finding distributions
$\tilde{\delta}_H$ that converge to the delta distribution in the
weak-$\ast$ topology.  Therefore, it is fitting at this point to
consider some numerical examples that investigate this weak-$\ast$
convergence.  We use a squared exponential function 
\begin{equation}
  \label{mollifier}
  \phi(\mb{x}) = e^{-\absv{\mb{x}}^2}, \quad \mb{x} \in \reals^2,
\end{equation}
to test the action of the different approximations and measure the
error using
\begin{equation}
  \label{weak-error}
  E_{\text{weak}}(H) := \absv{ \tilde{\delta}_H(\phi) - \phi(0) } =
  \absv{\int \delta_H(\mb{x}) \phi(\mb{x}) d\mb{x} - \phi(0) } , 
\end{equation}
as $H \to 0$. In order to the improve convergence of the quadrature
scheme when $H$ is small, we integrate over twice the support
of the associated $\delta_H$ so the discontinuities remain inside the
domain.  We also apply the following change of variables
\begin{equation}
  \label{quadchangeofvariables}
  \int_{B(0,2H)} \delta_H(\mb{x}) \phi( \mb{x}) d\mb{x} =
  \int_{B(0,2)} \eta_m(\mb{y}) \phi( H\mb{y}) d\mb{y}. 
\end{equation}
MATLAB's integral2 function is used to perform the quadrature with
relative tolerance set to $10^{-10}$.  
We first report the weak-$\ast$ convergence of the radial approximations
and some of the tensor product approximations.  Results of this
experiment are presented in Figure \ref{fig:weakconv}, from which it is
clear that satisfying more moment conditions results in faster
convergence.
\begin{figure}[htbp]
  \begin{subfigure}[t]{0.45\textwidth}
    \centering
    \includegraphics[width=\textwidth]{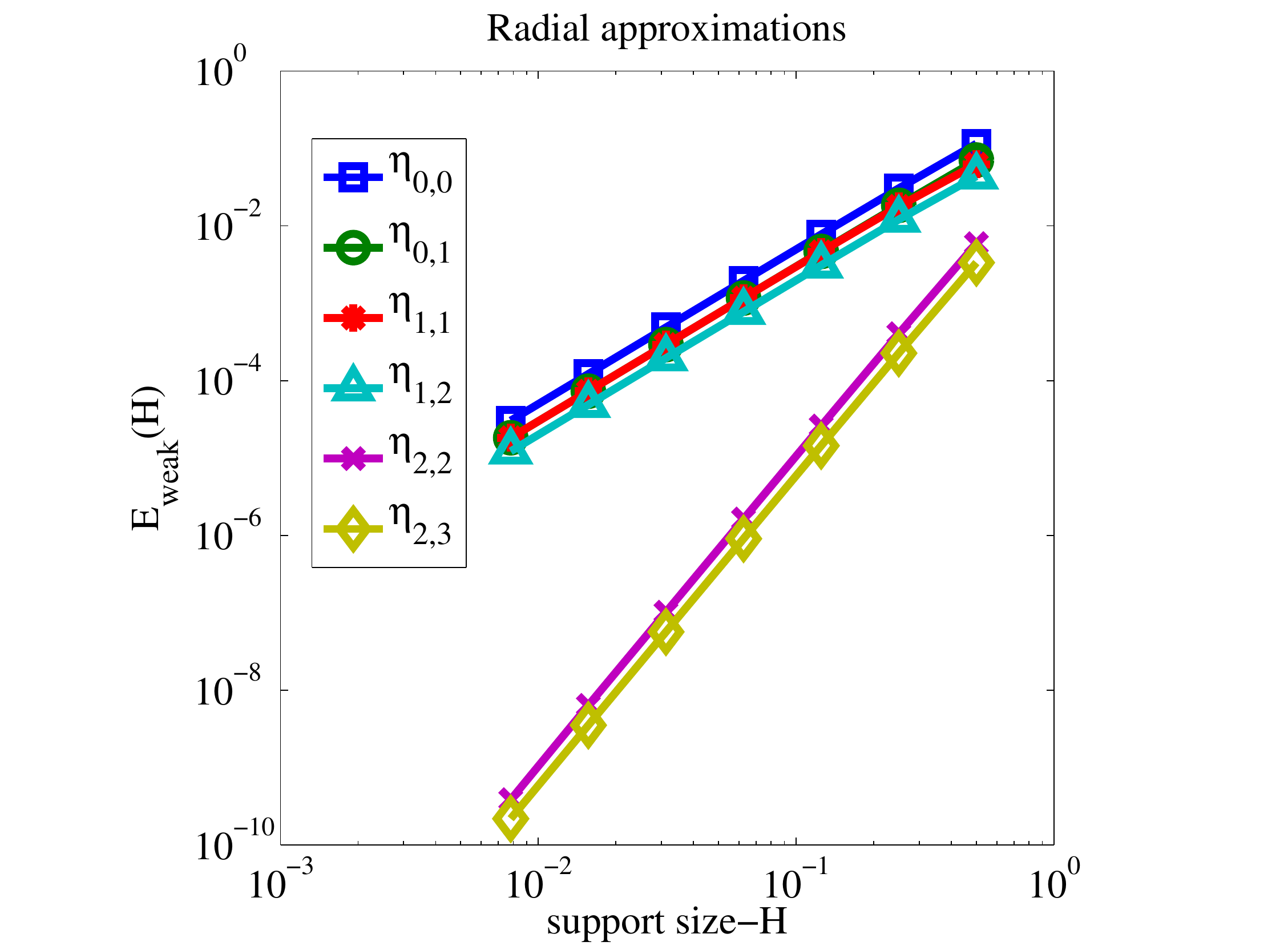} 
  \end{subfigure}
  \begin{subfigure}[t]{0.45\textwidth}
    \centering
    \includegraphics[width=\textwidth]{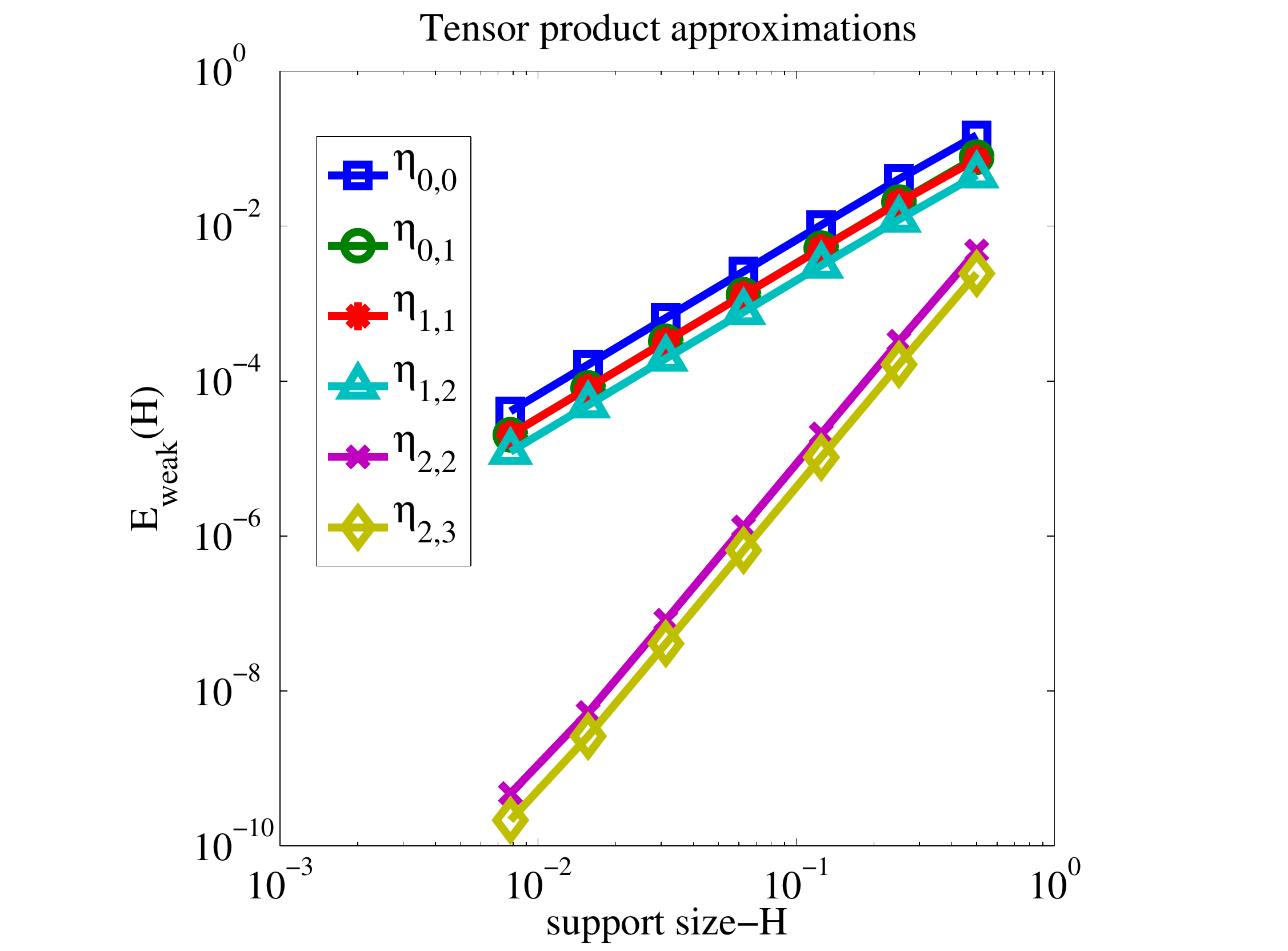}  
  \end{subfigure}
  \caption{Weak-$\ast$ convergence of radial and tensor product
    distributions.}
  \label{fig:weakconv}
\end{figure}

Next, we compare the accuracy of our approximations to a number of other
delta distribution approximants that are commonly used in the literature
(where these regularizations can be derived from discrete moment
conditions \eqref{momentsdiscrete} along with some other additional
conditions being imposed). Specifically, we compare the regularizations
constructed in Section~\ref{Section:etadefinitions} with four common
regularizations obtained by solving discrete moment conditions (see
\cite{tornberg}, for example). In 2D, we construct all regularizations
using tensor products.  Following \cite{tornberg}, we consider two hat
functions with support $[-H,H]$ and $[-2H,2H]$ respectively:
\begin{gather}
  \label{L1}
  \eta_{\text{hat},1} (r) := 
  \begin{cases}
    1 - \absv{r},& |r|<1,\\
    0,& \mbox{otherwise}, 
  \end{cases}
  \qquad \mbox{and} \qquad 
  \eta_{\text{hat},2} (r) := 
  \begin{cases}
    \frac{1}{4} \left(2 - \absv{r} \right), &|r|<2,\\
    0,& \mbox{otherwise}.   \end{cases}
\end{gather}
Both $\eta_{\text{hat},1}$ and $\eta_{\text{hat},2}$ are equivalent
(modulo scaling) to the regularization $\eta_{1,1}(r)$ obtained in
\eqref{hat-eta}.  Both of these hat functions satisfy the same number of
compact $m$-moment conditions, but in practice $\eta_{\text{hat},2}$ is
preferred since more quadrature points are present within its support
\cite{tornberg}.

We consider two additional regularizations defined in \cite{tornberg}
based on the cosine function
\begin{equation}
  \label{cos}
  \eta_{\cos} (z) := \frac{1}{4} \left(1 + \cos(\pi z/2) \right),
\end{equation}
and piecewise cubic function 
\begin{equation}
  \label{cubic}
  \eta_{\text{cubic}} (z) := \left\{
    \begin{aligned}
      & 1 - \frac{1}{2} \absv{z} - \absv{z}^2 + \frac{1}{2} \absv{z}^3,
      \qquad && \absv{z} \le 1,\\
      & 1 - \frac{11}{6} \absv{z} + \absv{z}^2 - \frac{1}{6} \absv{z}^3,
      \qquad &&1 <\absv{z} \le 2,
    \end{aligned}
  \right.
\end{equation}
both of which are supported on $ [-2H, 2H]$.  It is easy to see that
$\eta_{\cos}$ is equivalent to our first moment approximation
$\eta_{1,\cos}$ obtained using the trigonometric basis \eqref{cos-1}. We
expect this approximation to have second-order weak-$\ast$ convergence
{\it if all quadratures were exact}. However, as discussed in
\cite{tornberg}, $\eta_{\cos}$ only has first-order convergence when
used to approximate the delta distribution \cite{tornberg} on a uniform
grid and using a trapezoidal rule discretization for the moment
conditions.

Figure \ref{fig:tensorcomparison} compares the weak-$\ast$
convergence of our tensor product distributions with the other discrete
approximations mentioned above. It is clear that $\eta_{L1}$,
$\eta_{L2}$ and $\eta_{\cos}$ all have second order convergence as
expected. The only nontrivial case is $\eta_{\text{cubic}}$, which is not
directly related to any of our previous approximations. It is
interesting that even though this approximation is built to satisfy
three discrete moment conditions \cite{tornberg}, it appears to satisfy
three continuous approximations as well.
\begin{figure}[htbp]
  \centering
  \includegraphics[width=0.5\textwidth]{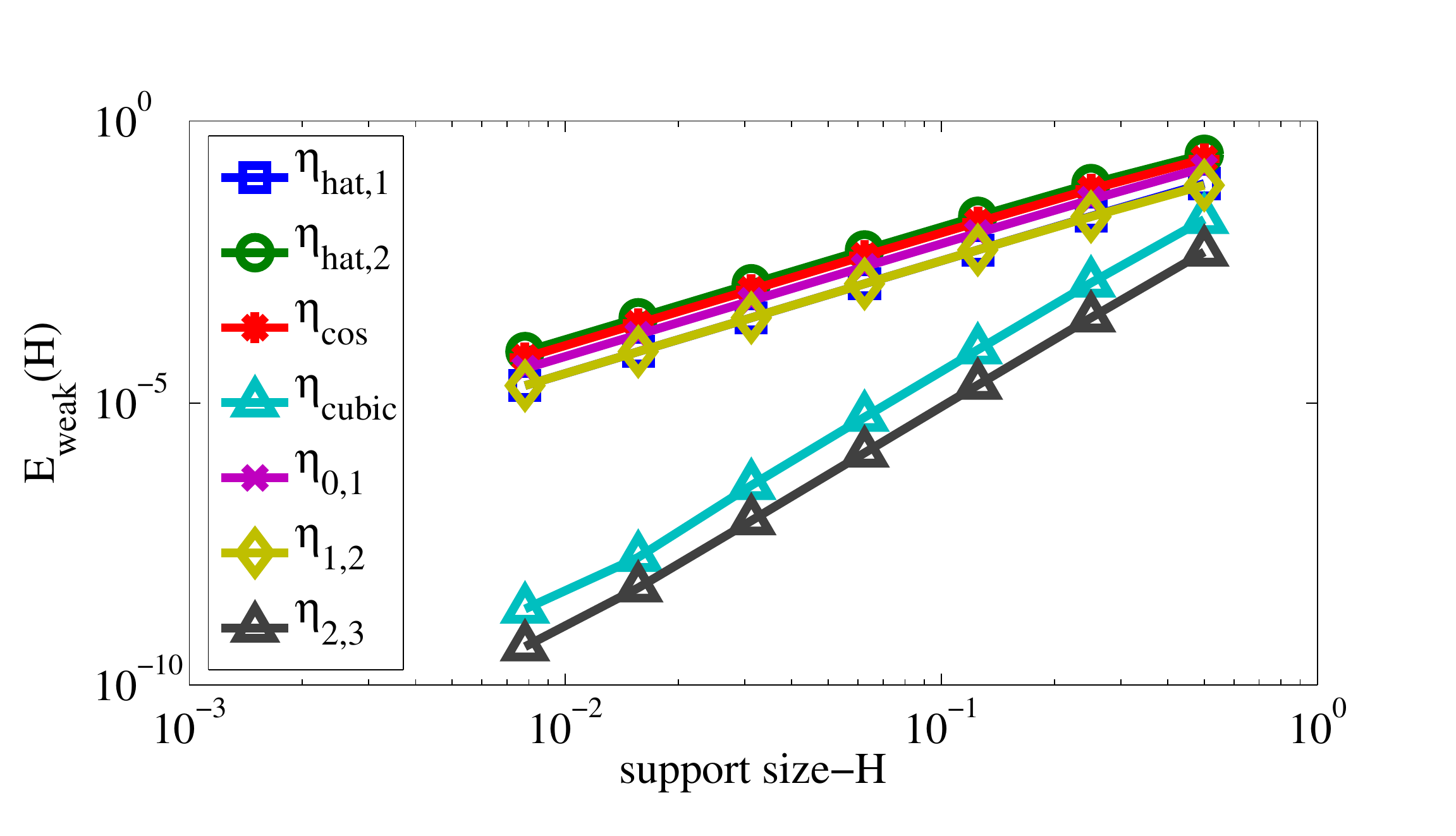}
  \caption{Comparing weak-$\ast$ convergence of the discrete
    approximations in \cite{tornberg} against our continuous tensor
    product distributions. }
  \label{fig:tensorcomparison}
\end{figure}

%%%%%%%%%%%%%%%%%%%%%%%%%%%%%%%%%%%%%%%
%%%%%%%%%%%%%%%%%%%%%%%%%%%%%%%%%%%%%%%
%%%%%%%%%%%%%                                     %%%%%%%%%%%%%%
%%%%%%%%%%%%%         Section 4                   %%%%%%%%%%%%%%
%%%%%%%%%%%%%                                     %%%%%%%%%%%%%%
%%%%%%%%%%%%%%%%%%%%%%%%%%%%%%%%%%%%%%%
%%%%%%%%%%%%%%%%%%%%%%%%%%%%%%%%%%%%%%%

%\input{Section4}
\section{Applications to prototypical PDEs with approximate point
  sources} 
So far, we have been concerned with constructing approximate
distributions that converge to the delta distribution in two specific
modes. We now turn our attention to the error inherited by the solutions
of a PDE where point sources are replaced by approximations
$\tdelta_H$. That is, we wish to examine the error $\|u-u_H\|_X$ in some
norm $X$, where $u$ and $u_H$ refer to solutions of \eqref{Problems} and $H$
denotes the support of $\delta_H$.

In practice, a PDE discretization with mesh size $h$ is used to
approximate solutions to $\Ell u_H = \delta_H$. Suppose we fix $H>0$,
then $\lim_{h\rightarrow 0} u_{H,h} = u_H$ provided we have chosen a
convergent numerical method. If $\tdelta_H$ we choose is regular enough,
then $u_H$ may itself be smooth enough for pointwise comparisons to be
meaningful.  The limit must be taken in an appropriate norm; however in
practice we {\it simultaneously} vary $H$ and $h$, and must therefore be
able to directly compare $u_{H,h}$ with $u$. The solution $u$ of $\Ell
u=\delta$ may possess singularities. For example, if $\Ell$ is the
Laplace operator with Dirichlet conditions on a disk, then $u$ is the
Green's function on the disk.  Because this $u$ has a logarithmic
singularity, we cannot compare $u_{H,h}$ to $u$ in a pointwise sense.
 
We must therefore address {\it Question 4} raised in the Introduction:
in what norm should we measure the convergence of $u_{H,h}\rightarrow
u$?  As might be expected, the answer depends on the PDE operator
$\Ell$.
One choice of norm $\|\cdot \|_H$ comes from taking $\|u -
u_{H,h}\|_{\H_0^s(\Omega)}$, where the value of $s$ depends on $\Ell$
and must be sufficiently large so that $\delta\in \H^{-s}(\Omega)$.
This choice is equivalent to comparing the Fourier coefficients of the
two approximations.  We use this approach to study scalar hyperbolic
problems in Sections~\ref{wave-in-1D} and~\ref{Numerics:wave-2D}, and it
is particularly instructive in the KdV equation which we consider in
Section~\ref{Numerics:KDV}.
  
Another choice of norm is based on comparing functions pointwise in
$\Omega$ away from the support of $\tdelta_H$. In other words, we use
\begin{equation}\label{BHnorm} 
  \|w\|_{BH} := \| w (1 - \chi_{B(0,H)}) \|_{L^{\infty}(\Omega)},
\end{equation} 
where $\chi_\Gamma$ is a usual $C^{\infty}$ cut-off function that takes
the value $1$ on $\Gamma$ and smoothly decays to zero away from
$\Gamma$. We apply this norm in Section~\ref{Section:Helmholtznumerics}
when studying the Helmholtz equation.  The disadvantage of this choice
is that as $H$ changes, so does the definition of the norm.
Another norm that is intermediate between $\| \cdot \|_{BH}$ and
$\| \cdot \|_{\H_0^s(\Omega)}$ is the 
$W_{\alpha}$--norm from Definition \eqref{weightedsobolevspace}.
We can use any of these to replace the norm $\|\cdot\|_X$
in the expression \eqref{reg_error_and_dis_error}.
  
For fixed $H>0$, the behaviour of the discretization error
$\|u_{H}-u_{H,h}\|_X$ will depend on the choice of numerical method and
on the grid parameter $h$. However, even if we pick an excellent
numerical method, if the error due to the regularization $\|u-u_H\|_X$
is not properly controlled then $u_{H,h}$ will not be a good
approximation to $u$. We present examples highlighting this point below.

\subsection{Elliptic PDEs}

In this subsection, we consider the case when $\Ell$ is a linear
second-order elliptic operator with zero boundary conditions. We first
consider the simple situation corresponding to a constant coefficient
operator, and then generalize to the situation where the coefficients
may vary.

Suppose first that we denote by $L$ a constant-coefficient
elliptic
operator. Then, for $H>0$, let $\tdelta_H$ denote a regularization that
satisfies the compact $m$-moment conditions. We then consider the
problems
\begin{align}
  &L u = \delta,     & \text{for } x \in \Omega,\qquad  
  && \text{with } u=0 \quad \text{for } x \in \partial \Omega, \label{ellpde}\\
  &L u_H = \tdelta_H,& \text{for } x \in \Omega,\qquad 
  && \text{with } u_H=0 \quad \text{for } x \in \partial \Omega. \label{ellpderegularized}
\end{align}

\begin{theorem} 
  \label{PDEpointwiseapprox} 
  Let $u$ and $u_H$ solve problems \eqref{ellpde} and 
  \eqref{ellpderegularized} respectively, and let $m$ be the number of
  compact $m$-moment conditions satisfied by $\tdelta_H$.  For all $x \in
  \Omega \backslash B(0,H)$ we have
  \begin{equation}
    \label{pointwiseestimate}
    \absv{u(x) - {u}_H(x)} \le C_m H^{m+1} ,
  \end{equation}
  and therefore $\|u-u_H\|_{BH} = C_m H^{m+1}$, where $C_m>0$ is a
  constant that depends on $\Omega$ and $m$ but not on $H$.
\end{theorem}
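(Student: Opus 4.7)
The plan is to use the Green's function representation for the operator $L$. Let $G(x,y)$ denote the Green's function of $L$ on $\Omega$ with homogeneous Dirichlet data. By linearity,
\begin{equation*}
u(x) = G(x, 0), \qquad u_H(x) = \int_{B(0,H)} G(x,y)\, \delta_H(y)\, dy.
\end{equation*}
Using the zeroth compact moment condition \eqref{momenttrunc1} to rewrite $u(x) = G(x,0)\, \innerprod{\delta_H}{\IND{\Omega}}{\Omega}$, we obtain
\begin{equation*}
u(x) - u_H(x) = \int_{B(0,H)} \bigl[G(x,0) - G(x,y)\bigr]\, \delta_H(y)\, dy.
\end{equation*}

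For any $x \in \Omega \setminus B(0,H)$ the map $y \mapsto G(x,y)$ is smooth on a neighborhood of $\overline{B(0,H)}$, since its singularity sits at $y = x$. I would Taylor expand $G(x,\cdot)$ about the origin to order $m$,
\begin{equation*}
G(x,y) = G(x,0) + \sum_{1 \le |\alpha| \le m} \frac{\partial_y^\alpha G(x,0)}{\alpha!}\, y^\alpha + R_{m+1}(x,y),
\end{equation*}
with $|R_{m+1}(x,y)| \lesssim |y|^{m+1} \sup_{y' \in B(0,H)} |D_y^{m+1} G(x,y')|$. Substituting into the integral representation and invoking the higher compact moment conditions \eqref{momenttrunc2} eliminates every Taylor-polynomial contribution, leaving
\begin{equation*}
u(x) - u_H(x) = -\int_{B(0,H)} R_{m+1}(x,y)\, \delta_H(y)\, dy.
\end{equation*}
Bounding $|y| \le H$ on $B(0,H)$ and using $\|\delta_H\|_{L^1(B(0,H))} = \|\eta_m\|_{L^1(B(0,1))}$, which is independent of $H$ by the scaling $\delta_H(\mb{x}) = H^{-n}\eta_m(\mb{x}/H)$, then produces the desired $\bigO(H^{m+1})$ bound.

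The main obstacle is controlling $\sup_{y \in B(0,H)} |D_y^{m+1} G(x,y)|$ uniformly. Since $G$ has a diagonal singularity, its $(m+1)$-st derivative behaves like $|x-y|^{-(n-2)-(m+1)}$ (with a logarithmic analogue in $n=2$), which blows up as $x$ approaches $\partial B(0,H)$. Hence $C_m$ must be understood as depending on the deleted neighborhood under consideration: for $x$ bounded away from the origin by any fixed amount $R>0$ and $H<R/2$, interior elliptic regularity for $L$ provides a uniform bound on $|D_y^{m+1} G(x,\cdot)|$ over $B(0,H)$, giving the pointwise estimate. The $\|\cdot\|_{BH}$ formulation is then immediate because the smooth cutoff $1 - \chi_{B(0,H)}$ vanishes on a neighborhood of $B(0,H)$, enforcing exactly the separation required to close the bound.
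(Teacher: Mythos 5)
Your proof is essentially the paper's: the paper likewise represents $u_H$ through the Green's function, treats $y \mapsto \Gee(x-y)$ as the test function, and invokes the Taylor-expansion/moment-condition estimate \eqref{weak-star-estimate} together with smoothness of $\Gee$ away from the diagonal. Your additional observation --- that $\sup_{y \in B(0,H)} \absv{D_y^{m+1} G(x,y)}$ degenerates as $x$ approaches $\partial B(0,H)$ for $n \ge 2$, so the constant is uniform only once $x$ is separated from the origin by a fixed amount (which the cutoff in $\|\cdot\|_{BH}$, and the fixed radius $\bar{H}$ used in the paper's numerics, effectively enforce) --- is a legitimate refinement that the paper's one-line proof does not address.
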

\begin{proof}
  The solution of \eqref{ellpde} is $u\equiv \Gee$, the Green's function
  of the elliptic operator $L$ in $\Omega$.  Let $\delta_H$ be an
  approximation of $\delta$ that satisfies the compact $m$-moment
  condition. Then for $x\in \Omega \setminus B(0,H)$,
  \begin{equation}
    \label{proofpointwiseestimate}
    \begin{aligned}
      \absv{u(x) - {u}_H(x) } &= \absv{ \Gee(x) - \int_{\Omega} \delta_H(y)\Gee(x-y)dy} 
      &\le \absv{ C(\Gee, m) H^{m+1}},
    \end{aligned}  
  \end{equation}
  where the last inequality follows from the estimate in
  \eqref{weak-star-estimate} and the fact that the Green's function
  $\Gee$ is infinitely differentiable away from the origin (see Theorem
  6.5 in \cite{mclean}). Therefore, $\|u-u_H\|_{BH} =
  \|(u(x)-u_H(x))(1-\chi_{B(0,H)})\|_{L^\infty(\Omega)} \leq
  C(\Gee,m)H^{m+1}$.
\end{proof}

We also examine the difference $u-u_H$ in a weighted Sobolev norm in
$\mathbb{R}^n$ for $n=2,3$. Our starting point is the recent work of
\cite{Morin} and \cite{Dangelo}.  Specifically, we use the key result in
Section~2.1 of \cite{Morin}: given $F\in W_{-\alpha}^*(\Omega)$, the
constant-coefficient second order elliptic PDE $Lw=F$ in $\Omega$ with zero Dirichlet
data
possesses a unique solution $w\in W_{\alpha}(\Omega)$ provided that $\alpha
\in (\frac{n}{2}-1,1)$ (the result in \cite{Morin} was actually proved
for more general elliptic operators).  Moreover, the solution $w$
satisfies 
\begin{equation}
  \label{morinbound}
  \norm{w}{W_\alpha}\leq C_* \norm{F}{(W_{-\alpha})^*}.
\end{equation}
Here $C_*>0$ is a positive constant that depends on $\Omega$, the PDE
coefficients and $\alpha$; as $\alpha \rightarrow \frac{n}{2}-1$, the
constant $C_*$ blows up. The regularity and bounds can be improved under
certain assumptions on the coefficients of $L$.  Now suppose that $u \in
W_{\alpha}$ solves the PDE when $F=\delta$, whereas $u_H$ solves the
problem when $F=\tdelta_H$, a regularization that satisfies the compact
$m$-moment condition with support size $H$. By linearity and the bound
in \eqref{uniformboundedness}, it is easy to see that
\begin{equation}
  \label{errorbound}
  \norm{u-u_H}{W_\alpha} \leq C_* \norm{\delta -
    \tdelta_H}{(W_{-\alpha})^*} \leq \tilde{C}_*H^{\frac{1}{\beta}(\alpha
    + (\beta-1)(m+1))}.
\end{equation}

We can use this result to interpret the rate of convergence of different
numerical schemes for solving elliptic PDEs.  We can also make an
statement about convergence in the $L^2_{\alpha}$ spaces.  Given the
solution $u \in W_{\alpha}$ and the approximation $u_H$ as before,
suppose that $H=h^\beta$ where $h$ is a discretization parameter for the
PDE, and assume that $\|u-u_H\|_{L^2(\Omega)\setminus B(0,h)}$ is small.
We then have from the weighted Poincar\'e inequality that
\begin{equation}
  \label{L2bound} 
  \norm{u - \tilde{u}}{L^2_{\alpha}(B(0,h))} \leq C(\alpha) h
  H^{\frac{1}{\beta}(\alpha + (\beta -1)(m+1))} =  C(\alpha)
  H^{\frac{1}{\beta}(\alpha + (\beta -1)(m+1)+1)} .
\end{equation}
Consequently, { in 2D and in the limit as $\alpha \to 0$ and $\beta \to 1$}, we
cannot obtain better than first-order convergence in $L^2(\Omega)$.

\subsection{Numerical experiments with the Helmholtz equation} 
\label{Section:Helmholtznumerics}

We now present numerical experiments that support our estimates of the
regularization error in the case
of elliptic PDEs with singular source terms. In particular, we
solve the Helmholtz equation in one and two dimensions with homogeneous
boundary conditions.  Let $u$ and $u_H$ denote solutions of the
problem
\begin{equation}
  \label{primary-helm}
  \Delta u + k_0^2 u =F \quad \text{in } B(0,1) 
  \qquad \text{and } u = 0 \quad\text{on } \partial \Omega,
\end{equation}
with $F=\delta$ and $F=\delta_H$ respectively. Then set the wavenumber
$k_0=10$ and consider solutions of \eqref{primary-helm} on the unit
ball $B(0,1)$ in dimensions $n=1,2$.  Our goal is to study the
convergence of $u_H$ to $u$ using different measures of the error as $H
\to 0$.

The solution $u_H$ of the regularized PDE does not have a closed form
expression, and so we consider a numerical approximation $u_{H,h}$.  In
the following we use {\tt ChebFun} \cite{chebfun-guide} to solve for
$u_{H,h}$ on a fine collocation grid, so that numerical errors $\|u_h
-u_{H,h}\|$ are negligible compared to the approximation error $\|u-u_H
\|$.  For an elliptic problem like the Helmholtz equation, we expect
high regularity away from the source.  Therefore, a good choice of norm
to compare $u$ and $u_{H,h}$ is $\|\cdot\|_{BH}$ as defined in
\eqref{BHnorm}. Because we want to vary both $H$ and $h$, we make a
specific choice of norm
\begin{equation}
  \label{infterrornorm}
  E_{\text{pointwise}}(H):=\|u-u_{H,h}\|_{B\bar{H}}= \norm{ (u -
    u_{H,h})(1 -  \chi_{B(0,\bar{H})}) }{L^\infty(B(0,1))}, 
\end{equation}
where $\chi_{B(0,\bar{H})}$ is the standard $C^{\infty}$-cutoff function
for the ball centered at the origin with radius $\bar{H}$ given by
the largest support size of $\delta_H$ in our experiments.  As a result,
when $H\rightarrow 0$ we are always comparing $u$ and $u_{H,h}$
pointwise over the same set.  We also define the quantity 
\begin{equation}
  \label{conv-ratio}
  R_{\text{pointwise}}(H) := \log_2 \left(
    \frac{E_{\text{pointwise}}(H)}{E_{\text{pointwise}}(H/2)} \right), 
\end{equation}
which will be used with multiple values of $H$ to measure the rate of
convergence of numerical solutions. In particular, as $H \to 0$ the
value of $R_{\text{pointwise}}(H)$ should saturate to the expected rate
of convergence from our analysis.

\subsubsection{Helmholtz in $\reals^1$: Pointwise convergence
  in a deleted neighborhood} 

We begin by considering the Helmholtz equation \eqref{primary-helm} in
1D, with $\Omega=B(0,1)\equiv [-1,1]$.  
The fundamental solution in this case is
\begin{equation}
  \label{Poisson1Dfund}
  u(x) =- \frac{\sin \left( \frac{k_0}{2} (1+  \min \left\{ x, 0
      \right\} )\right) \sin \left( \frac{k_0}{2} \left( 1 -  \max
        \left\{ x,0 \right\}\right) \right)}{k_0 \sin(k_0)},
\end{equation}
and belongs to $\mathcal{H}_0^1([-1,1])$. We take several values of the
support size $H = 1/2^m$ for $m= 2,3,4,5$ and take the cut-off radius
$\bar{H} = 1/4$. Computed results for continuous delta approximations 
$\eta_{\text{cos}}$ and
$\eta_{\text{cubic}}$ in the 1D Legendre basis are presented in Figure~\ref{fig:helm1d}. In order
to study the asymptotic rate of convergence of our approximations we
report values of $R_{\text{pointwise}}(H)$ in Table~\ref{tab:helm1d} for
different support sizes, and these results indicate that the ratios
saturate to the expected rates of pointwise convergence of
\eqref{pointwiseestimate}.

\begin{table}[htbp]  \centering\small
  \begin{tabular}{ | c | c | c | c| c| c| }
    \hline
    \multicolumn{5}{|c|}{$R_{\text{pointwise}}(H)$} & \multirow{2}{*}{expected rate} \\ \cline{1-5}
    support size ($H$) &$1/4$ & $1/8$ & $1/16$ & $1/32$ & \\   
    \hline\hline
    $\eta_{0,1}$          &  1.7770  &  1.9438  &  1.9859  &  1.9965 & 2\\
    \hline
    $\eta_{1,2}$          &  1.4589  &  1.8761  &  1.9696  &  1.9924 & 2 \\
    \hline
    $\eta_{2,3}$          &  3.7442  &  3.9371  &  3.9843  &  3.9961 & 4\\
    \hline
    $\eta_{\text{cos}}$   &  1.3331  &  1.8249  &  1.9558  &  1.9889 & 2 \\
    \hline
    $\eta_{\text{cubic}}$ &  3.3330  &  3.8319  &  3.9579  &  3.9895 & N/A \\
    \hline  
  \end{tabular}
  \caption{Convergence rates for the 1D Helmholtz solution using the
    measure $E_\text{pointwise}(H)$ as defined in \eqref{infterrornorm}
    for pointwise solution error away from the support of the delta
    distribution. The expected rates of convergence are obtained as in
    \eqref{pointwiseestimate} and depend on the number of moment
    conditions that are satisfied by the approximation.} 
  \label{tab:helm1d}
\end{table}

\begin{figure}[htbp]
  \centering
  \includegraphics[width=0.4\textwidth]{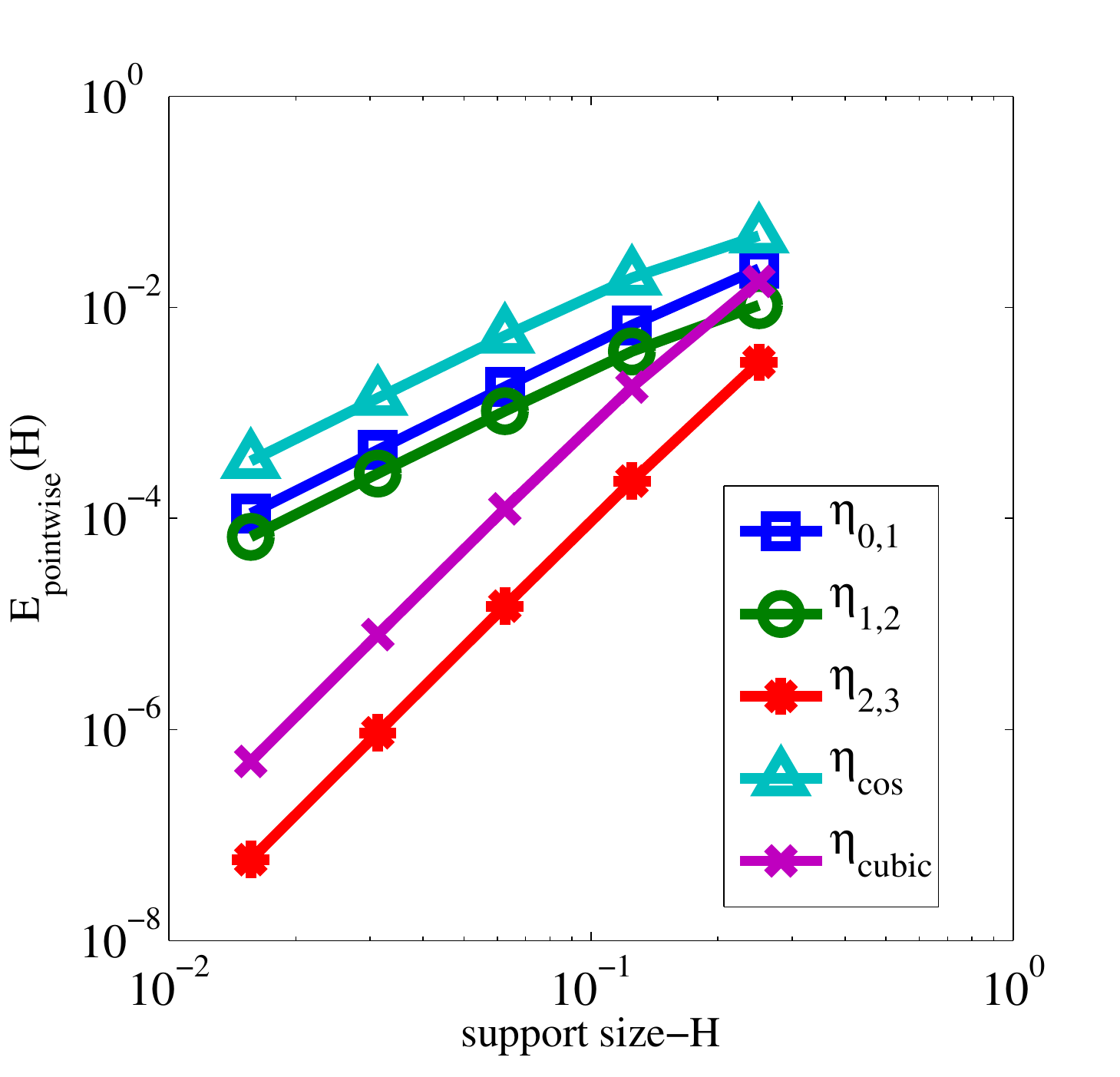}
  \caption{Pointwise error in the 1D Helmholtz solution, with
    three continuous polynomial approximations of the delta distribution
    (see Table~\ref{table:delta-summary}) compared to two approximations
    \eqref{cos} and \eqref{cubic}.}
  \label{fig:helm1d}
\end{figure}

Although these results demonstrate that $\eta_{\text{cubic}}$ and
$\eta_{2,3}$ both exhibit fourth-order convergence, Figure~\ref{fig:helm1d}
shows that the solution using $\eta_{2,3}$ is almost an order of
magnitude more accurate for any given value of the support size
$H$. This hints at a trade-off between choosing a more
regular solution that is easier to compute numerically but is less
accurate, and an approximation that is more difficult to resolve but
gives a more accurate solution.

\subsubsection{Helmholtz in $\reals^2$: Pointwise convergence
  in a deleted neighborhood}

We next consider the Helmholtz equation \eqref{primary-helm} on the unit
disk in 2D,
with the main purpose of this example being to test the radially
symmetric delta approximations in Table~\ref{table:delta-summary}.  The
Green's function for the Helmholtz equation on the unit disk can be
written in terms of Bessel functions as~\cite{duffy}
\begin{equation}
  \label{helmholtz2D_anal}
  u(r,\theta) = -\frac{1}{4} Y_0(k_0 r) + \frac{1}{4}
  \sum_{n=-\infty}^\infty \frac{J_n(0) Y_n(k_0)}{J_n(k_0)} J_n(k_0
  r)e^{in\theta} . 
\end{equation}
Because the radial delta approximations are symmetric, the solution
$u_H$ is also clearly symmetric.

We then perform a numerical convergence study with parameters identical
to those used in the previous one-dimensional Helmholtz example.  Plots
of the pointwise error are presented in Figure~\ref{fig:helm2d} and the
corresponding ratios of successive errors $R_{\text{pointwise}}(H)$ are
listed in Table~\ref{tab:helm2d}. We see once again that the
numerical rates of convergence are in agreement with the estimates from
\eqref{pointwiseestimate}.
\begin{figure}[htbp]
  \centering
  \includegraphics[width=0.4\textwidth]{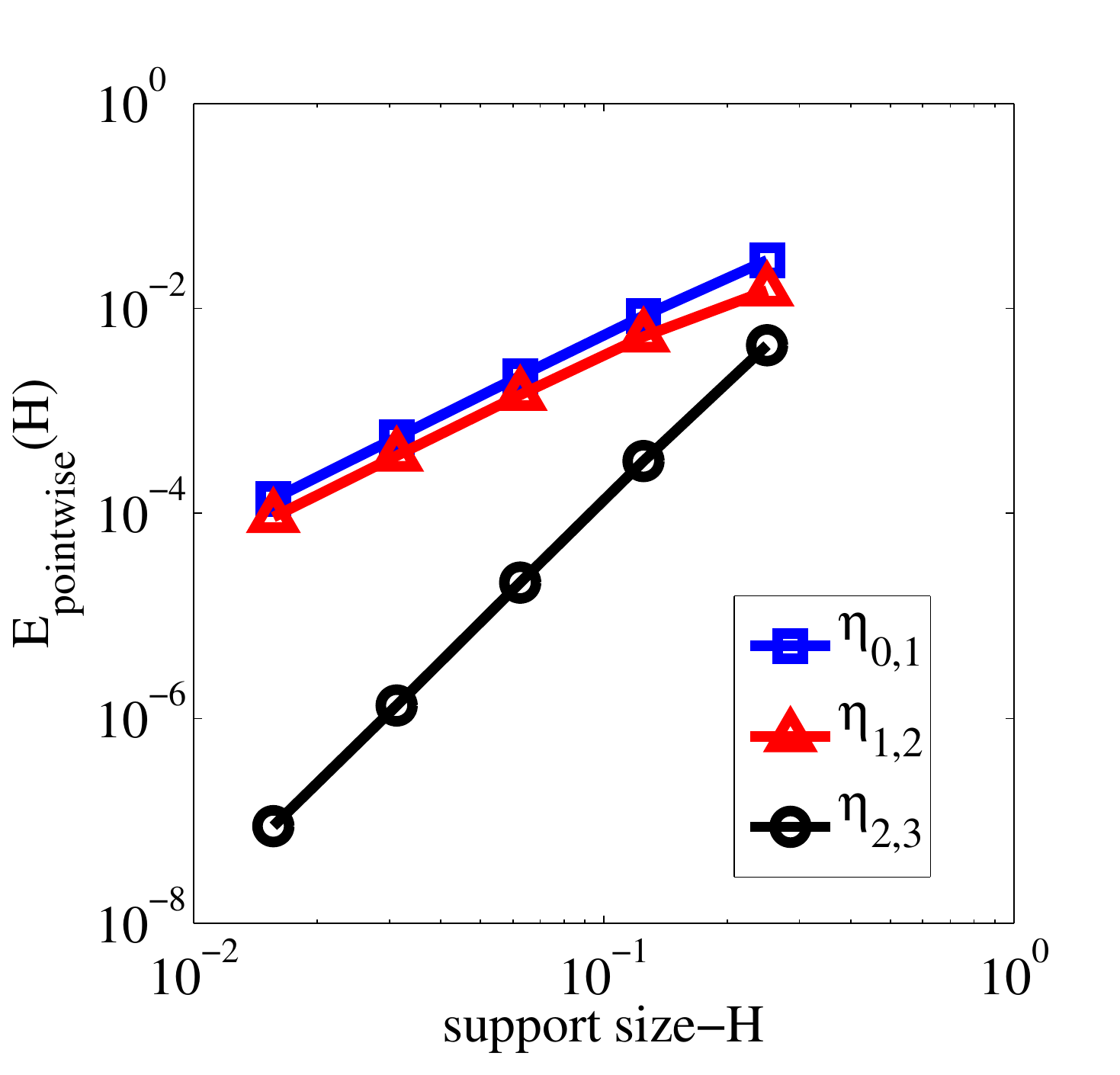}
  \caption{Convergence rates for the 2D Helmholtz solution using the
    error measure \eqref{infterrornorm} and several radially-symmetric
    delta approximations.}
  \label{fig:helm2d}
\end{figure}

\begin{table}[htbp]
  \centering\small
  \begin{tabular}{ | c | c | c | c| c| c|}
    \hline        
    \multicolumn{5}{|c|}{$R_{\text{pointwise}}(H)$} &  \multirow{2}{*}{expected rate}\\ \cline{1-5}
    support size ($H$) &$1/4$ & $1/8$ & $1/16$ & $1/32$ & \\   
    \hline\hline
    $\eta_{0,1}$ & 1.8010  &  1.9498  &  1.9874  &  1.9968 & 2 \\
    \hline
    $\eta_{1,2}$ & 1.4964  &  1.8840  &  1.9715  &  1.9930 & 2 \\
    \hline
    $\eta_{2,3}$ & 3.7560  &  3.9401  &  3.9822  &  3.9127 & 4\\
    \hline
  \end{tabular}
  \caption{Convergence rates for the Helmholtz solution in 2D, compared 
    to the expected analytic rates of convergence in
    \eqref{pointwiseestimate}.}   
  \label{tab:helm2d}
\end{table}

\subsubsection{Helmholtz in $\reals^2$: Convergence in
  $\|\cdot \|_{W_\alpha(B(0,1))}$} 
\label{Numerics:weighted-sobolev}

In this subsection we study the rate of convergence of numerical
solutions to the 2D Helmholtz equation in the weighted Sobolev norms of
Section~\ref{sec:conv-sobolev}. We consider a unit disk as above and
study the same radial regularizations of the delta distribution. The
main difference is that now the error is measured in the
$W_{\alpha}(B(0,1))$ norm using
\begin{equation}
  \label{weightednormfortest}
  E_{W}(H):= \norm{u - u_{H,h}}{W_{\alpha}} := \int_{B(0,1)}
  \absv{\nabla u - \nabla u_{H,h}}^2 \absv{x}^{2\alpha} dx. 
\end{equation}
For our numerical simulations, we consider three different values of
$\alpha = \{ 0.25, 0.5, 0.9 \}$ to investigate the estimate in
\eqref{errorbound} and also take support of size $H=1/2^{n}$ for $n =
\{2, 3, \dots, 8\}$.  Figure \ref{fig:Walphaconv} depicts convergence
plots for various radial approximations (see Table \ref{table:delta-summary}) to the delta distribution and
for different values of $\alpha$.  We also list the error ratios
$R_W(H)$ computing using \eqref{weightednormfortest} and
\eqref{conv-ratio}, and report the corresponding results in
Table~\ref{tab:helm2d-sobolev}.  Note that in the limit as $h \to 0$,
the rates saturate toward the estimate derived in
\eqref{uniformboundedness} as $\beta$ approaches 1. The reason why we
observe this mode of convergence is that $\beta > 1$ corresponds to the
case where the support of the associated $\delta_H$ goes to zero faster than the
resolution of the numerical method. If we assume that this resolution is
of the same order as the mesh size, then $\beta >1$ means that the
support becomes smaller than the mesh size; but this is precisely the
case when our quadrature schemes fail since there will not be sufficient
quadrature points to integrate the function accurately. In our numerical
experiments, we can study the limit of $h \to 0$ while the mesh is small
enough so that the quadrature is still sufficiently accurate.

\begin{figure}[htbp]
  \begin{subfigure}[t]{0.32\textwidth}
    \centering
    \includegraphics[width=1\textwidth]{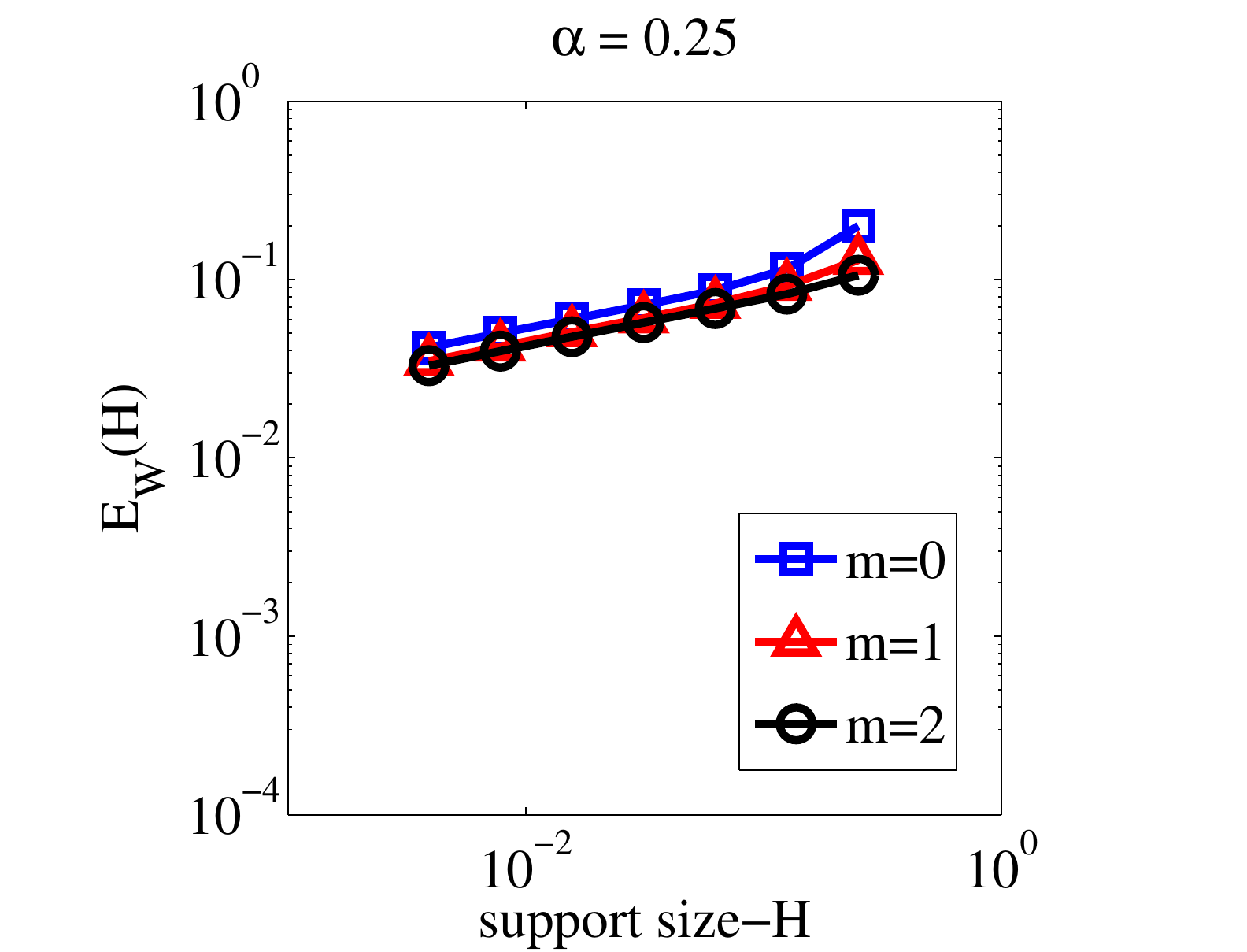}
  \end{subfigure}
  \begin{subfigure}[t]{0.32\textwidth}
    \centering
    \includegraphics[width=1\textwidth]{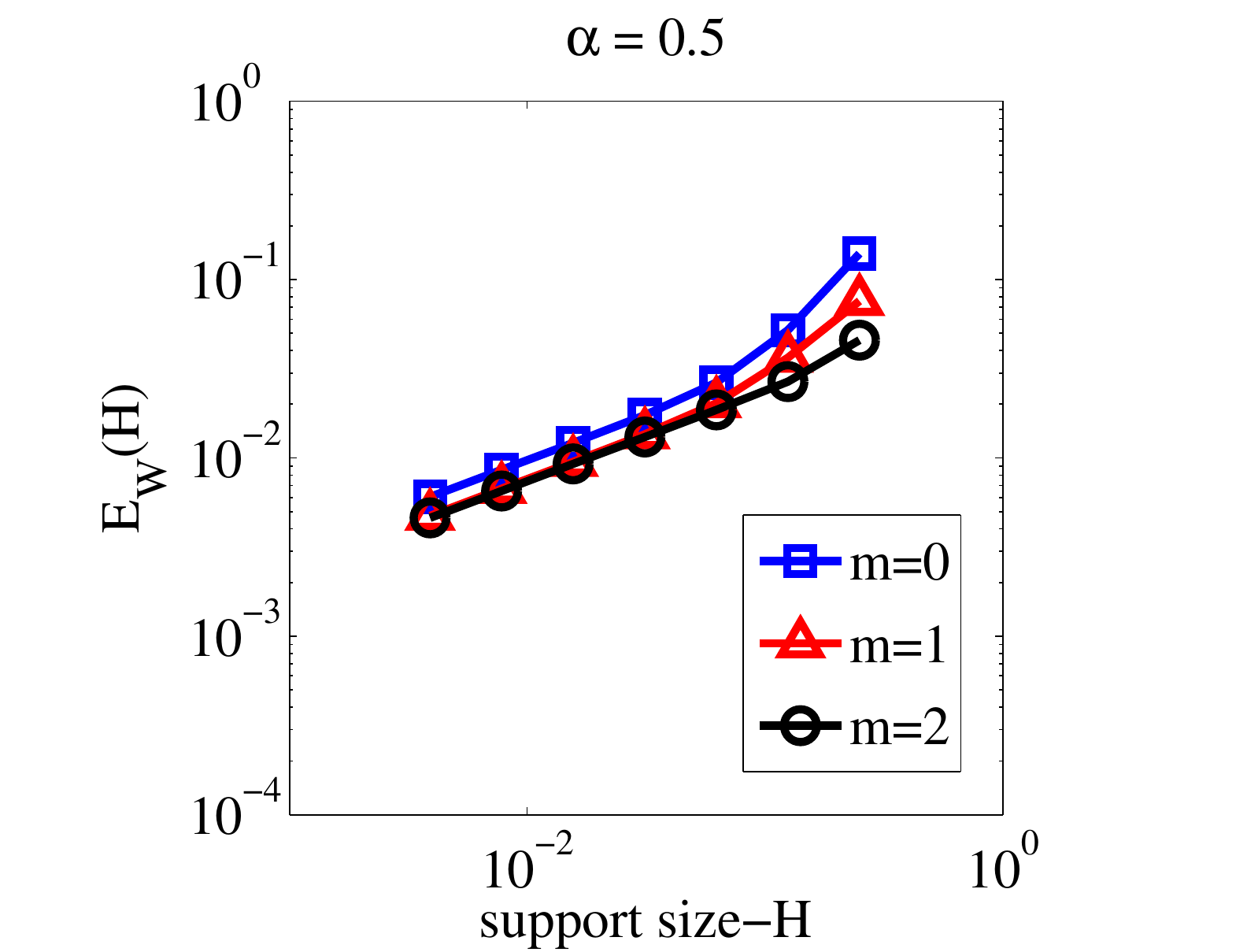}
  \end{subfigure}
  \begin{subfigure}[t]{0.32\textwidth}
    \centering
    \includegraphics[width=1\textwidth]{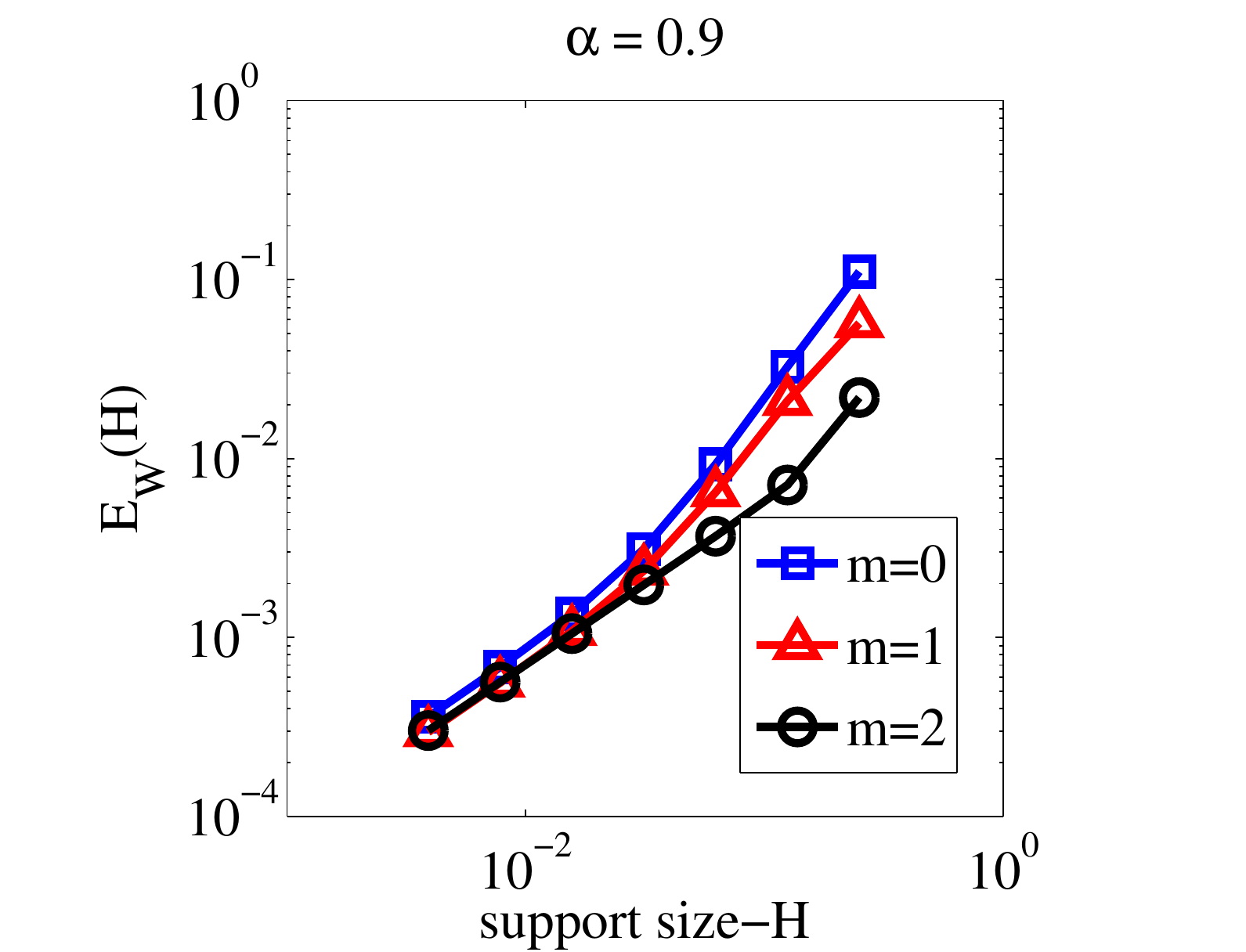}
  \end{subfigure}
  \caption{Error in the 2D Helmholtz solution using the $E_W(H)$ error
    measure from \eqref{weightednormfortest} based on the weighted Sobolev
    norms $W_\alpha$ for  several values of $\alpha$.}
  \label{fig:Walphaconv}
\end{figure}

\renewcommand\arraystretch{1}

\begin{table}[htbp]
  \centering\footnotesize
  \begin{tabularx}{0.99\textwidth}{ |X |X  | X | X | X | X| X |X| c|}
    \hline        
    \multicolumn{8}{|c|}{$R_W(H)$} & expected rate \\ \cline{1-8}
    \multicolumn{2}{|c|}{support size ($H$)} &$1/4$ & $1/8$ & $1/16$ & $1/32$ & $1/64$ & $1/128$ &  for $\beta =1$ \\   
    \hline\hline
    \multirow{3}{*}{$\alpha=0.25$}& $\eta_{0,1}$ &  0.8020  &  0.3946  &  0.2777  &  0.2613  &  0.2615   & 0.2656& \multirow{3}{*}{$0.25$}  \\
    &$\eta_{1,2}$ & 0.4834  &  0.3309  &  0.2695  &  0.2625  &  0.2654  &  0.2719 & \\
    &$\eta_{2,3}$ &   0.3540  &  0.2684  &  0.2611  &  0.2624  &  0.2670  &  0.2746& \\
    % \multicolumn{7}{|c|}{$\alpha=0.5$} \\ \hline
    \hline
    \multirow{3}{*}{$\alpha=0.5$} &$\eta_{0,1}$ &  1.4268  &  0.9757  &  0.6074 &   0.5178  &  0.5035  &  0.5018 & 
    \multirow{3}{*}{$0.5$}\\
    &$\eta_{1,2}$ & 1.0581 &   0.8471  &  0.5775  &  0.5130 &   0.5031  &  0.5025 & \\
    &$\eta_{2,3}$ &   0.7701  &  0.5264  &  0.5061 &   0.5020 &   0.5015   & 0.5024 &\\
    \hline
    \multirow{3}{*}{$\alpha=0.9$}&  $\eta_{0,1}$ &   1.7643   & 1.8157   & 1.5731  &  1.2033   & 0.9878  &  0.9207 & 
    \multirow{3}{*}{$0.9$} \\
    &$\eta_{1,2}$ &   1.4424   & 1.6975   & 1.4557  &  1.1223   & 0.9599  &  0.9138 &\\
    &$ \eta_{2,3}$ &   1.6225 &   0.9483 &   0.9073&    0.9018 &   0.9004&    0.9001 & \\
    \hline  
  \end{tabularx} \noindent
  \caption{Rate of convergence of the 2D Helmholtz solution in the
    weighted Sobolev norm of $W_{\alpha}$ for different 
    number of moments and different values of $\alpha$.  The rates are
    compared to the expected rate of convergence from \eqref{errorbound}
    when $\beta \to 1$. This shows that the rates are independent of the
    number of moments conditions in the limit as support size
    approaches the resolution of the numerical scheme.}
  \label{tab:helm2d-sobolev}
\end{table}

%%%%%%%%%%%%%%%%%%%%%%%%%%%%%%%%%%%%%%%%%%%%%%%%%%%%%%%%%%%%%%%%%%%%%%%%%%%%%%%%%%%%%%%%%%%%%%%%%%%%%%%%%%%%%%%Section 5%%%%%%%%%%%%%%%%%%%%%%%%%%%%%%%%%%%%%%%%%%%%%%%%%%%%%%%%%%%%%%%%%%%%%%%%

\subsection{Hyperbolic problems and the wave equation}

In the previous section we applied regularized point sources to the
solution of elliptic PDEs, and found that the solution exhibits
pointwise convergence as long as we are sufficiently far away from the
source.  Also, the numerical solution converges weakly to the
fundamental solution.  In the following, we perform the analogous
simulations for hyperbolic PDEs and see that these statements do not
necessarily hold.

\subsubsection{First-order wave equation in 1D}
\label{wave-in-1D} 

Consider the first-order wave equation on a periodic domain with an approximate impulse
initial condition:
\begin{equation}
  \label{wave-first-order}
  \left\{
    \begin{aligned}
      &u_{H,t} + u_{H,x} = 0 &&\text{ in } \mathbb{T}(0,2\pi) \times (0,T), \\
      & u_H(x,0) = \delta_H(x) &&\text{ on } \mathbb{T}(0,2\pi) \times \{t=0\}.
    \end{aligned} \right.
\end{equation}
The analytic solution $u_H(x,t)$ to this problem is a simple translation
of the initial condition: $u_H(x,t) = \delta_H(x-t)$. If $u(x,t)$
denotes the solution of \eqref{wave-first-order} with initial condition
$\delta$, then $u(x,t)$ is a delta distribution supported at $(x-t)$. This
means that the difference $|u-u_H|$ is exactly the regularization error
$|\delta-\delta_H|$ and so we can only study convergence of the solution
in the weak-$\ast$ sense.
However, if we solve \eqref{wave-first-order} {\it numerically}, then we
can study the pointwise error $|u_{H,h}(x,T)- u_H(x,T)|$ for large
$t$. Note that $u_H(x,t)=0$ outside the support of $\delta_H(x-t)$, but
as we shall see shortly this is not true for long-time discrete
solutions $u_{H,h}(x,T)$ owing to numerical dispersion.

To compute $u_{H,h}(x,t)$, we use a Fourier spectral collocation method
in space with a leap-frog scheme in time, and implement a code based on
Program 6 of \cite{trefethen}.   In our simulations we use a uniform
spatial grid of 1000 nodes to ensure that the initial condition
$\delta_H(x)$ is captured accurately by the method.  Time increments of
size $\Delta t= \frac{1}{8} \Delta x$ are used to ensure that the CFL
condition is satisfied.

We report the pointwise error at times $t=0$ and $t=36\pi$ (after 13
periods) and since $u_{H,h}(x,0)= \delta_H(x)=u_{H}(x,36\pi)$ on the grid, the
pointwise error becomes
\begin{equation}
  \label{pointwise-error-wave-1D}
  E(x) = \absv{ u_{H} (x,36\pi) - u_{H,h}(x, 36\pi)}
  = \absv{ u_{H,h} (x,0) - u_{H,h}(x, 36\pi)}. 
\end{equation}
\begin{figure}[htbp]
 \centering
 \begin{subfigure}{0.40\textwidth}
   \includegraphics[width=0.9\textwidth]{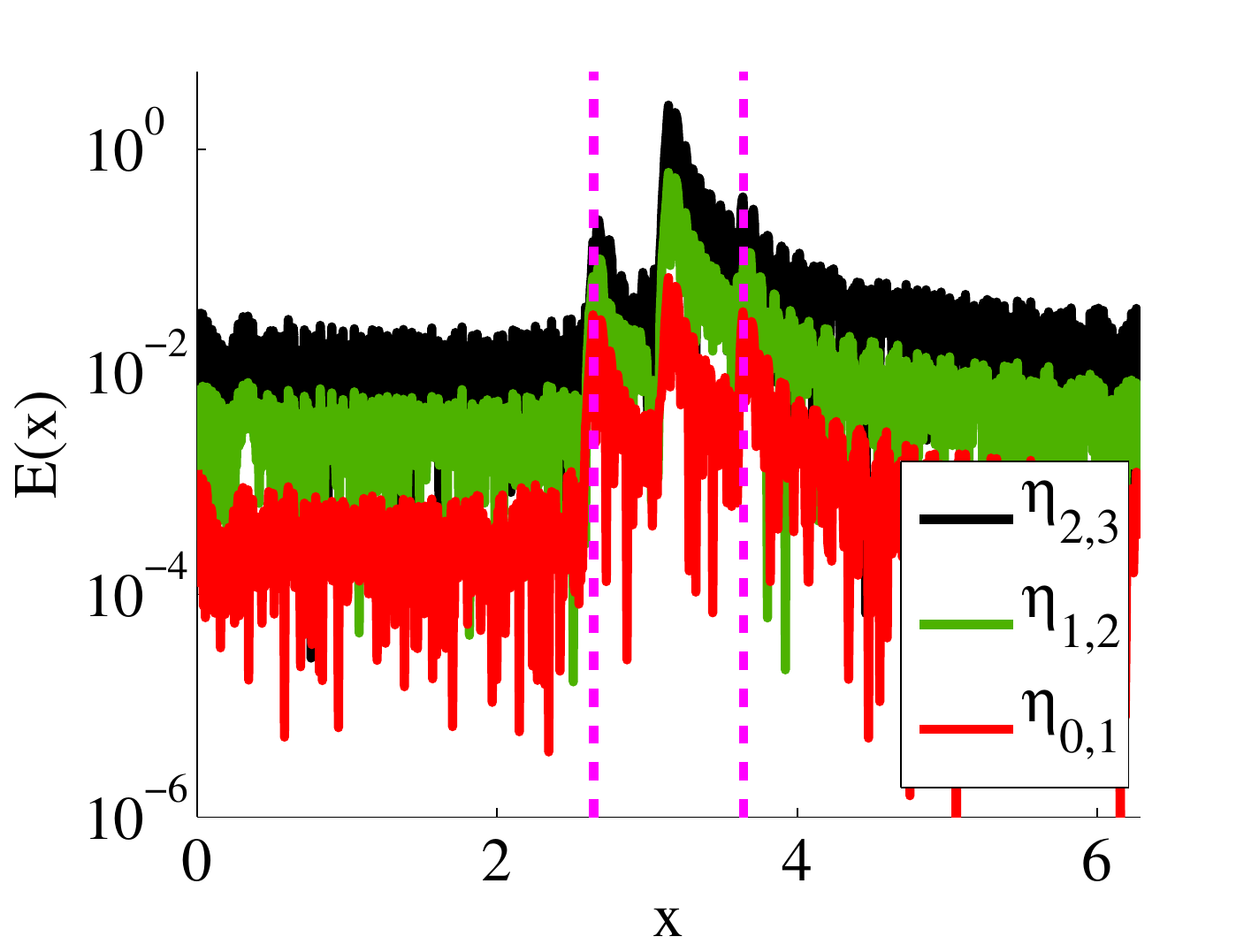}
 \end{subfigure}
 \begin{subfigure}{0.40\textwidth} 
   \includegraphics[width=0.9\textwidth]{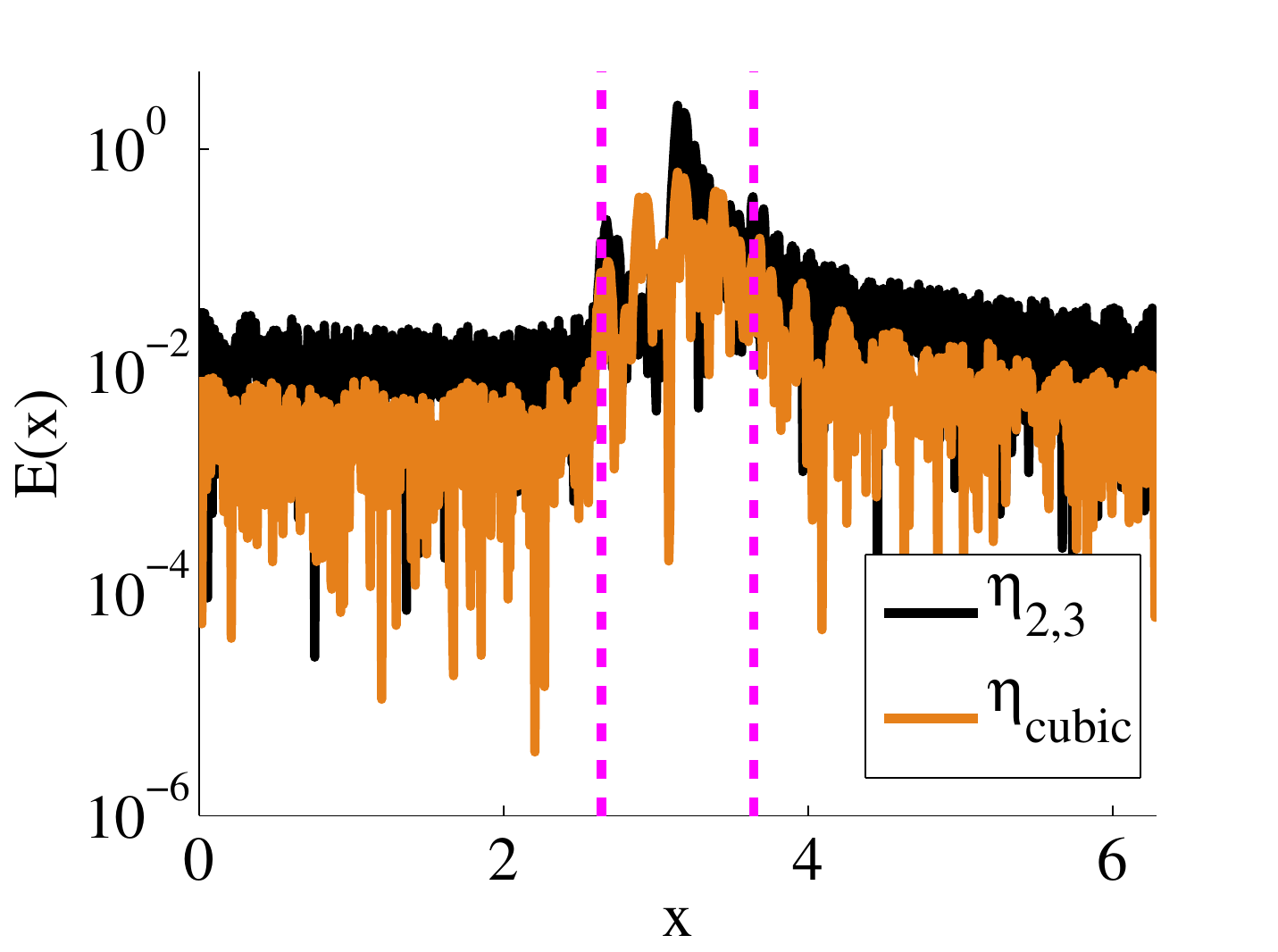}
 \end{subfigure}
 \caption{Pointwise error of solutions to the first order wave equation
   \eqref{wave-first-order} after 13 periods. Dashed lines indicate the
   support of the $\delta_H$ approximations.}
\label{wave-1D-pointwise-error}
\end{figure}
The results are displayed in Figure \ref{wave-1D-pointwise-error}, from
which we see that the error is larger for higher moment
approximations both inside and outside the support of the regularized
distributions. This demonstrates that the the error arises purely from
the discretization of the PDE and not from the regularization, since
$u_H(x,t)=\delta_H(x-t)$ is zero outside the interval $|x-t|\leq H$.
More importantly, $u_{H,h}(x,t)$ does not converge to the true solution
$u(x,t)$ outside the support of $\delta_H$.

Further insight into the cause of this growing numerical error is
afforded by viewing the results from Figure
\ref{wave-1D-pointwise-error} in the Fourier domain. Figures
\ref{wave-1D-fourier-loglog} compare the
discrete Fourier transform of the initial and final solutions for
various approximate delta distributions.  Keeping in mind that the
Fourier coefficients of the Dirac delta distribution are identically
equal to 1, the quality of an approximation $\delta_H$ to the delta
distribution can be measured by looking for Fourier modes that decay as
slowly as possible with increasing wavenumber. However, we note that it
is precisely the higher frequency modes that result in large
(accumulating) discretization errors.
\begin{figure}[htbp]
  \centering
  \begin{subfigure}{0.40\textwidth}
    \includegraphics[width=0.9\textwidth]{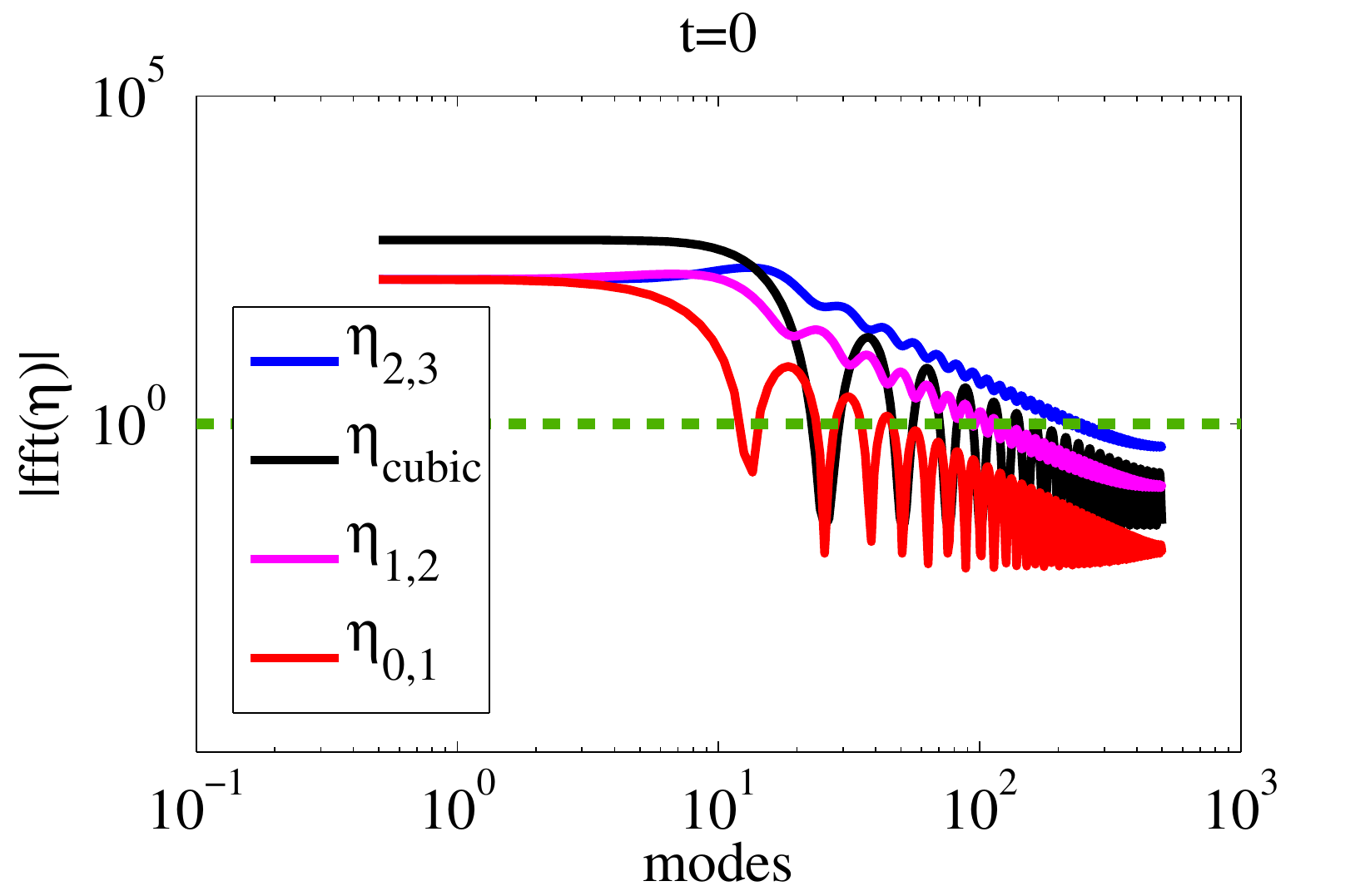}
  \end{subfigure}
  \begin{subfigure}{0.40\textwidth} 
    \includegraphics[width=0.9\textwidth]{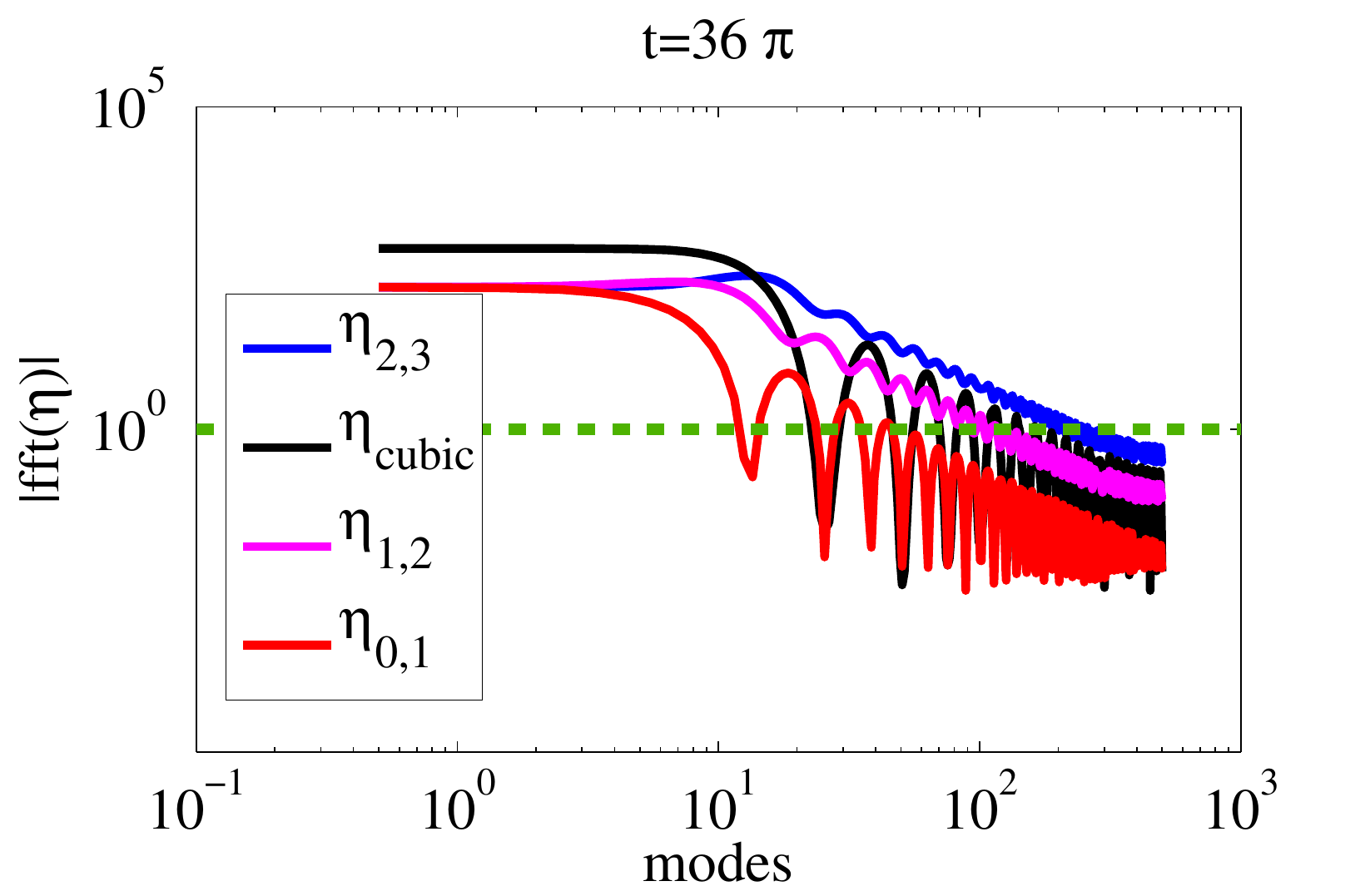}
  \end{subfigure}
  \caption{Comparison of the Fourier transform of the solution to the
    first order wave equation \eqref{wave-first-order} at $t=0$ and
    $t=36\pi$. The green dashed line indicates the exact Fourier
    transform of the delta distribution.}
  \label{wave-1D-fourier-loglog}
\end{figure}

This simple example identifies a number of issues that arise in the
numerical solution of hyperbolic problems with singular delta sources
when approximations to the delta distribution are used.  In essence,
the numerical solutions are dominated by dispersive errors that grow as
the simulation time $T$ increases.

\subsubsection{Second-order wave equation}
\label{Numerics:wave-2D}

Let $u(x,t)$ be the solution to the free-space 1-D wave equation $u_{tt}
= u_{xx}$ with initial condition $u(x,0)=\delta(x)$ and zero initial
velocity.  Suppose that $u_H$ solves the same problem but with initial
condition $u_H(x,0)= \tdelta_H(x)$, where $\tdelta_H$ has support $H$ and
satisfies the compact $m$-moment condition.  For fixed $x$, $u_H=0$ for
$t \le \absv{x}-H$, and $u_H(x,t)$ has a transient behavior for
$\absv{x} - H \le t \le \absv{x} + H$.  But for $t \ge \absv{x} + H$,
$u_H$ will converge to the fundamental solution $u$ with
$\bigO(H^{m+1})$ where $m$ is the number of moment conditions satisfied
by $\tdelta_H$. This becomes clear by noting that for fixed $x$, $B(0,H)
\subseteq B(x,t)$ for $t \ge \absv{x} + H$ and so
\begin{equation*}
  \label{convwaveeven}
  \begin{aligned}
    \int_{ B(x,t)}\delta_H(y)(t^2 - \absv{y - x}^2)^{-1/2} dy =
    (t^2 - \absv{x}^2)^{-1/2} + \bigO(H^{m+1}).  
  \end{aligned}
\end{equation*}
We expect, therefore, that $u_H$ will be close to the free-space
fundamental solution, at least away from the support of $\delta_H$.  In
fact, $u_H(x,t)$ converges pointwise to the free-space fundamental
solution $u(x,t)$ away from the wave front, regardless of the
symmetry of the approximation $\delta_H$. We expect to see the lack of
symmetry only within the wave front and not away from it.

While this result holds for the analytic solution $u_H(x,t)$, when
computing a numerical solution $u_{H,h}(x,t)$ dispersive errors lead to
qualitative differences. To illustrate this effect, we consider the
wave equation on the unit square with homogeneous Dirichlet boundary
conditions over the time interval $t\in [0,0.7]$:
\begin{equation}
  \label{wave-2D-test}
  \begin{cases}
    u_{tt} = u_{xx}, &\text{in } [-1.1]^2 \times (0, 0.7],\\
    u(x,0) = \delta_H,  &u_x(x,0) = u(-1,t) = u(1,t)=0.
  \end{cases}
\end{equation}
For short times, we expect solutions to be the same as for the
free-space wave equation, and in particular we expect them to be
radially symmetric.

We use the radially symmetric
discontinuous approximation $ \delta_H(r)= \frac{1}{H^2}
\eta_{2,2}(r/H)$ and the discontinuous tensor product approximation
$\delta_H(x,y) = \frac{1}{H^2} \eta_{2,2}(x/H) \eta_{2,2}(y/H)$. Both
regularizations satisfy two moment conditions, and we take $H=1/4$. As
we shall see, the discontinuity at the endpoints amplifies the effect of
dispersive errors and so this problem can be viewed as a worst case
scenario.

We use two numerical methods to demonstrate the interplay between
numerical dispersion and lack of symmetry for the tensor-product
$\tdelta_H$.  In the left column of Figure~\ref{fig:wave2D}, we present
the numerical solution using a spectral collocation method with
Chebyshev basis functions and a fourth order Runge-Kutta time-stepping
scheme. This code is available as program~20 of \cite{trefethen}, and we
choose $128^2$ collocation points.  In the right column of
Figure~\ref{fig:wave2D}, we applied a finite element method on a uniform
mesh with $128^2$ elements, using piecewise linear Lagrange
elements for the spatial discretization and Crank-Nicolson
time-stepping.  The code in this case is available as step-23 in the
tutorials for the {\tt deal.II} software package \cite{deal-II}.  We
anticipate that dispersive numerical errors from this finite element
scheme are larger than those in the spectral method.
\begin{figure}[htbp]
\noindent
  \begin{subfigure}[t]{0.40\textwidth}
    \centering
    \includegraphics[width=0.85\textwidth,tics=10]{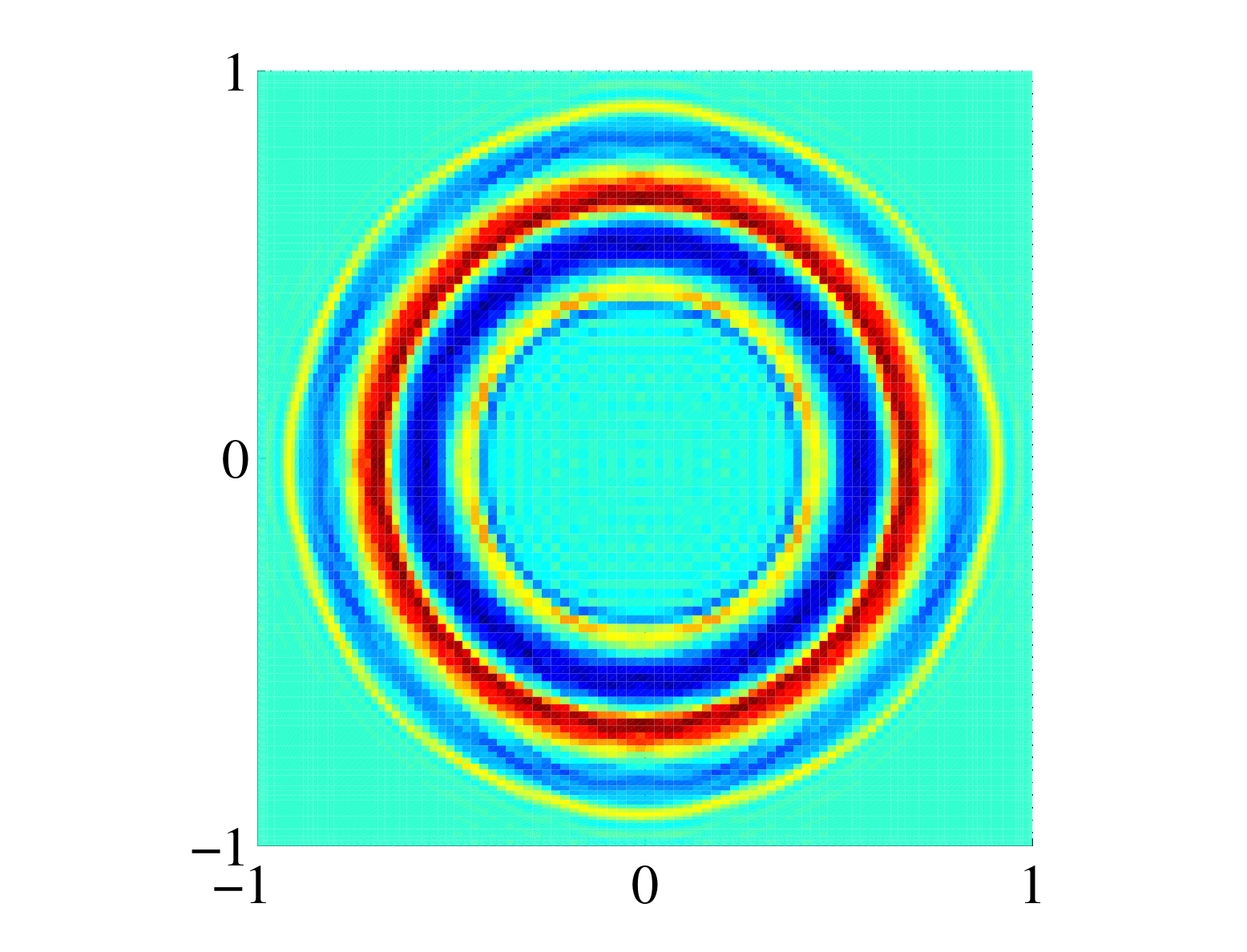}
  \end{subfigure}
  \begin{subfigure}[t]{0.40\textwidth}
    \centering
    \includegraphics[width=0.8\textwidth,tics=10]{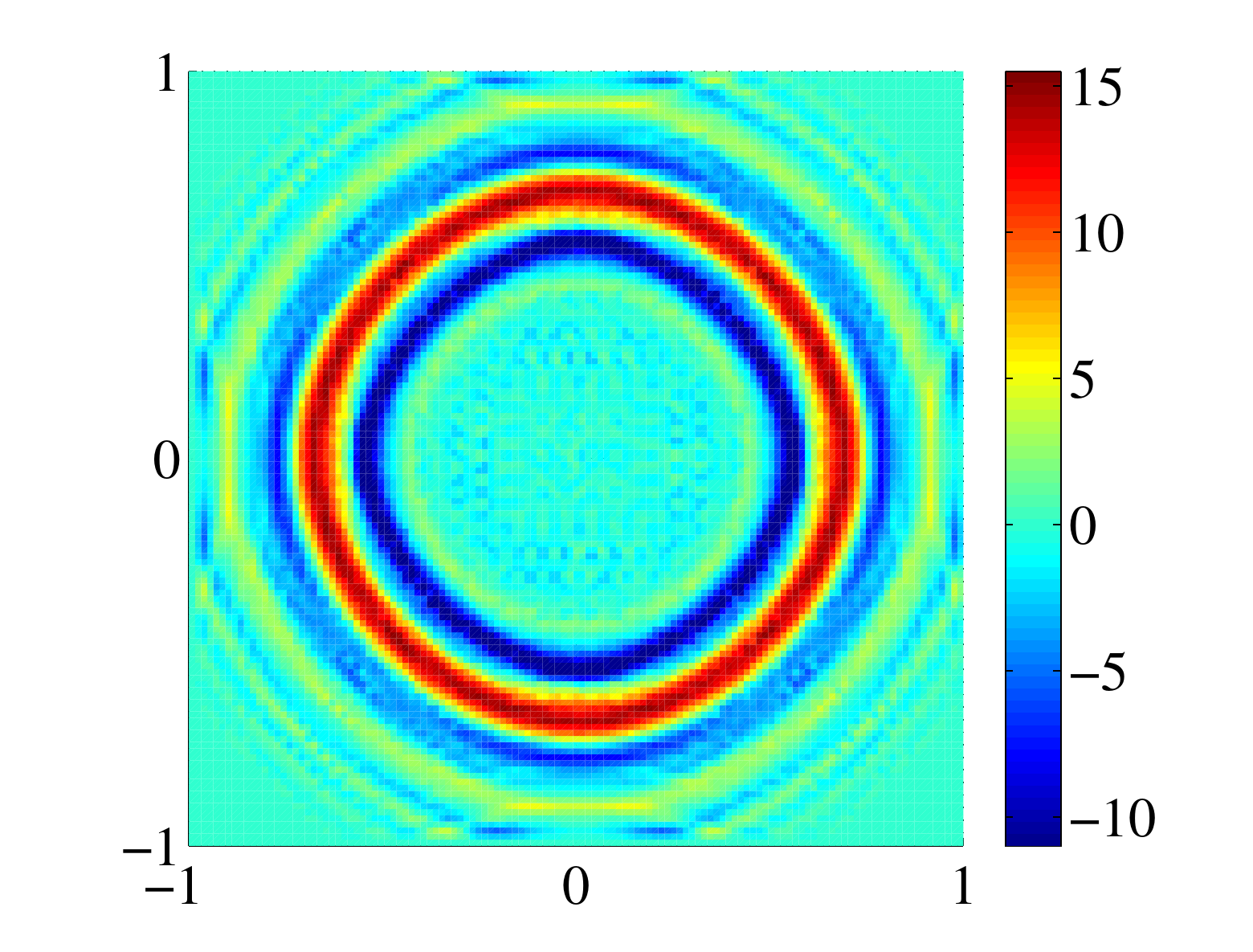}
  \end{subfigure}\\
  \begin{subfigure}[t]{0.40\textwidth}
    \centering
    \includegraphics[width=0.8\textwidth,tics=10]{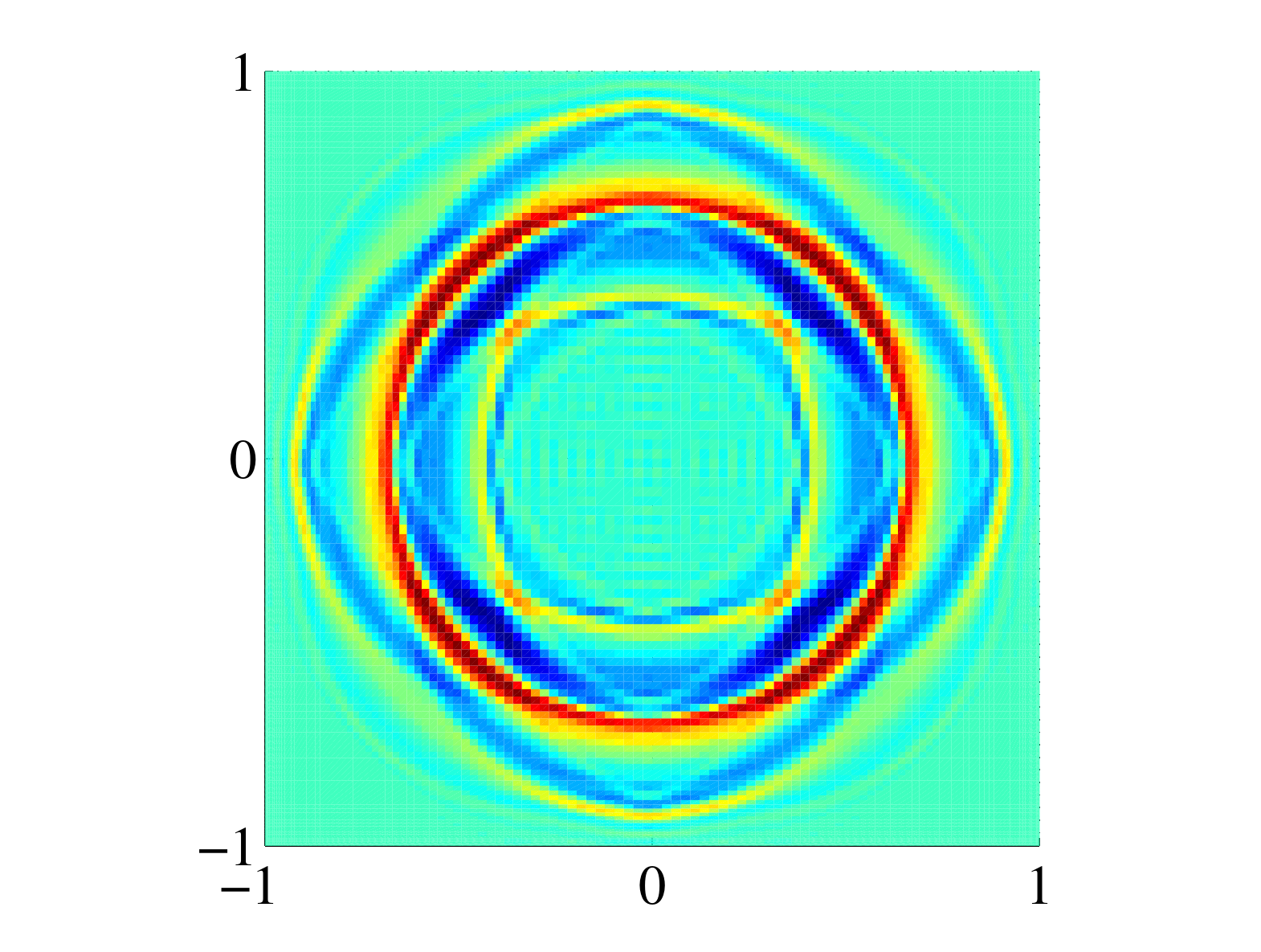}
  \end{subfigure}
  \begin{subfigure}[t]{0.40\textwidth}
    \centering
    \includegraphics[width=0.8\textwidth,tics=10]{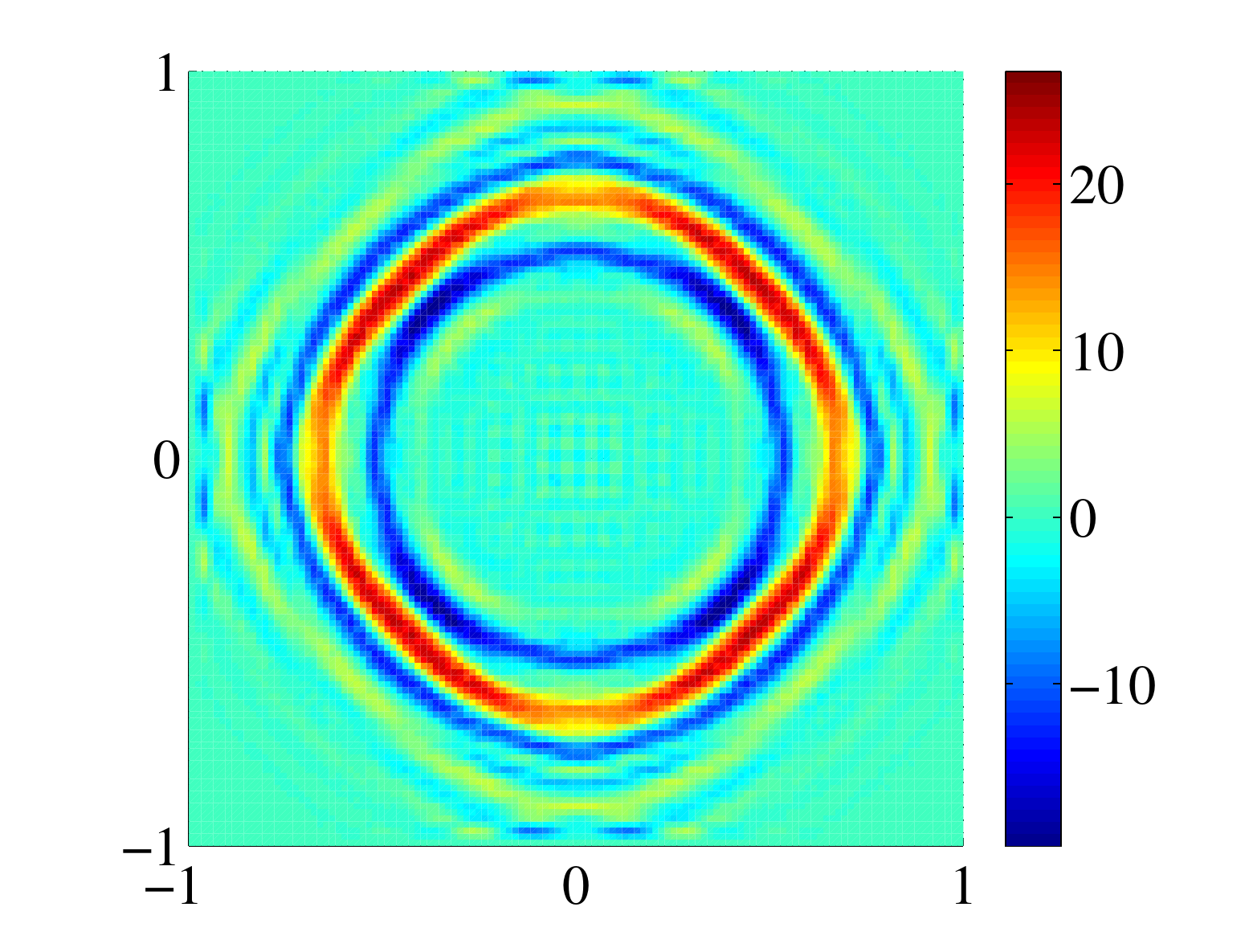}
  \end{subfigure}
  \caption{Numerical solution of the 2D wave equation using radially
    symmetric (top row) and tensor product (bottom row) approximations
    to the delta distribution.  The left column contains the spectral
    collocation results and the right column those with the finite
    element method. Note the reflected waves that appear in the images
    on the right, that indicate waves moving faster than the original
    wave front and which are reflected by the boundary.}
  \label{fig:wave2D}
\end{figure}

For both numerical methods, it is clear that the solution $u_{H,h}(x,t)$
for the tensor-product $\tdelta_H$ does not possess the expected radial
symmetry of the exact solution.  Furthermore, dispersive errors are 
more severe in case of the finite element solver, as expected; in
particular, the finite element solution exhibits 
artificial waves moving faster than the original wave
front. This issue is present for both radial and tensor-product
$\tdelta_H$ approximations, but the fluctuations behind the wave front
(closer to the origin) are significantly larger in case of the tensor
product approximation for both spectral and finite element solvers.

Two points are illustrated here. First of all, a tensor-product
approximation to the delta source term will yield numerical solutions
that do not possess the symmetries of the exact solution, and this
result is independent of the discretization method used. Secondly, the
choice of numerical method is vital in terms of controlling dispersive
errors.

%%%%%%%%%%%%%%%%%%%%%%%%%%%%%%%%%%%%%%%
%%%%%%%%%%%%%%%%%%%%%%%%%%%%%%%%%%%%%%%
%%%%%%%%%%%%%                                     %%%%%%%%%%%%%%
%%%%%%%%%%%%%         Section 6                   %%%%%%%%%%%%%%
%%%%%%%%%%%%%                                     %%%%%%%%%%%%%%
%%%%%%%%%%%%%%%%%%%%%%%%%%%%%%%%%%%%%%%
%%%%%%%%%%%%%%%%%%%%%%%%%%%%%%%%%%%%%%%

\subsection{Korteweg-de Vries (KdV) equation}
\label{Numerics:KDV}

In this final section we present numerical examples that illustrate
issues that can arise when computing approximate solutions of
nonlinear evolution PDEs with a regularized distribution as an initial
condition.  This is closely related to a common class of
nonlinear problems with singular source terms. For example, in the
immersed boundary framework the force from an elastic membrane enters
the fluid momentum equations via a singular line source or chain of
delta distributions \cite{peskin}.  We pick as a nonlinear test
problem the KdV equation on the periodic domain $[-8\pi, 8\pi]$:
\begin{equation}
  \label{KdV}
  %\left\{
  %  \begin{aligned}
      u_t + 6uu_x + u_{xxx} = 0 \quad \text{ in } \;\; \mathbb{T}([-8\pi,
      8\pi]) \times (0, 0.05] \quad \mbox{and}\;\;  u(x,0) = \tdelta_H.
\end{equation}
We have chosen a relatively large spatial domain and a short time interval in
order to minimize boundary effects; sufficiently accurate numerical
solutions of \eqref{KdV} in this setting will be good approximations of
the free-space solution. 

The free-space KdV equation with delta initial condition 
has no closed form solution.  This problem has nonetheless been
studied extensively and is known to consist of a single soliton moving
to the right along with radiative waves propagating to the left (see
\cite{drazin}), with the soliton portion of the solution given by
\begin{equation}
  \label{KDV-single-soliton}
  u(x,t) \approx \frac{1}{2} \text{sech}^2\left( \frac{1}{2} (x - t)
  \right). 
\end{equation}
In numerical solutions of \eqref{KdV}, we therefore expect to see a
similar soliton combined with radiative waves, at least for short times.

To solve this problem numerically, we use the Fourier spectral solver
implemented in Program 27 of \cite{trefethen} with $N=512$ Fourier
modes, and a fourth-order Runge-Kutta time-stepping method.  We observe
that the solution is very sensitive to the smoothness of the $\delta_H$
approximation. Furthermore, controlling dispersive error is
challenging, as is usually the case whenever simulating nonlinear wave
equations.

As a first test, we aim to characterize the impact of support size for
$\delta_H$ on the computed solutions. We perform our initial simulation
using the delta approximation $\eta_{2,5}(r)$ from
Section~\ref{Section:etadefinitions}, which is a 5th order polynomial
satisfying $m=2$ moment conditions and is
differentiable everywhere (see Table~\ref{table:delta-summary}). We
construct $\delta_H$ from $\eta_{2,5}(r)$ as 
\begin{gather*}
  \delta_H(x)=\frac{1}{H}\eta_{2,5}\left(\frac{|x|}{H} \right) \quad
  \text{where } x\in [-8\pi,8\pi],
\end{gather*}
and solve the KdV equation using our spectral scheme for $H = \pi,
\pi/2, \pi/4$. At least in the limit as $H\rightarrow 0$, we expect
$u_{H,h}$ to exhibit the expected soliton and radiative waves. Figure
\ref{fig:KDV-1} depicts the numerical solutions $u_{H,h}(x,t)$ for
several choices of $H$, with the left column showing solution in the
$(x,t)$ plane, while the right column shows the time evolution of the
Fourier modes. Note that the soliton portion of the solution
(the light green strip in the middle of the plots) is captured well for
all $H$.  However, the lower order radiative waves differ considerably
with $H$.
\begin{figure}[htbp]
  \noindent
  \centering
  \begin{subfigure}[t]{0.45\textwidth}
    \centering
    \includegraphics[width=0.9\textwidth]{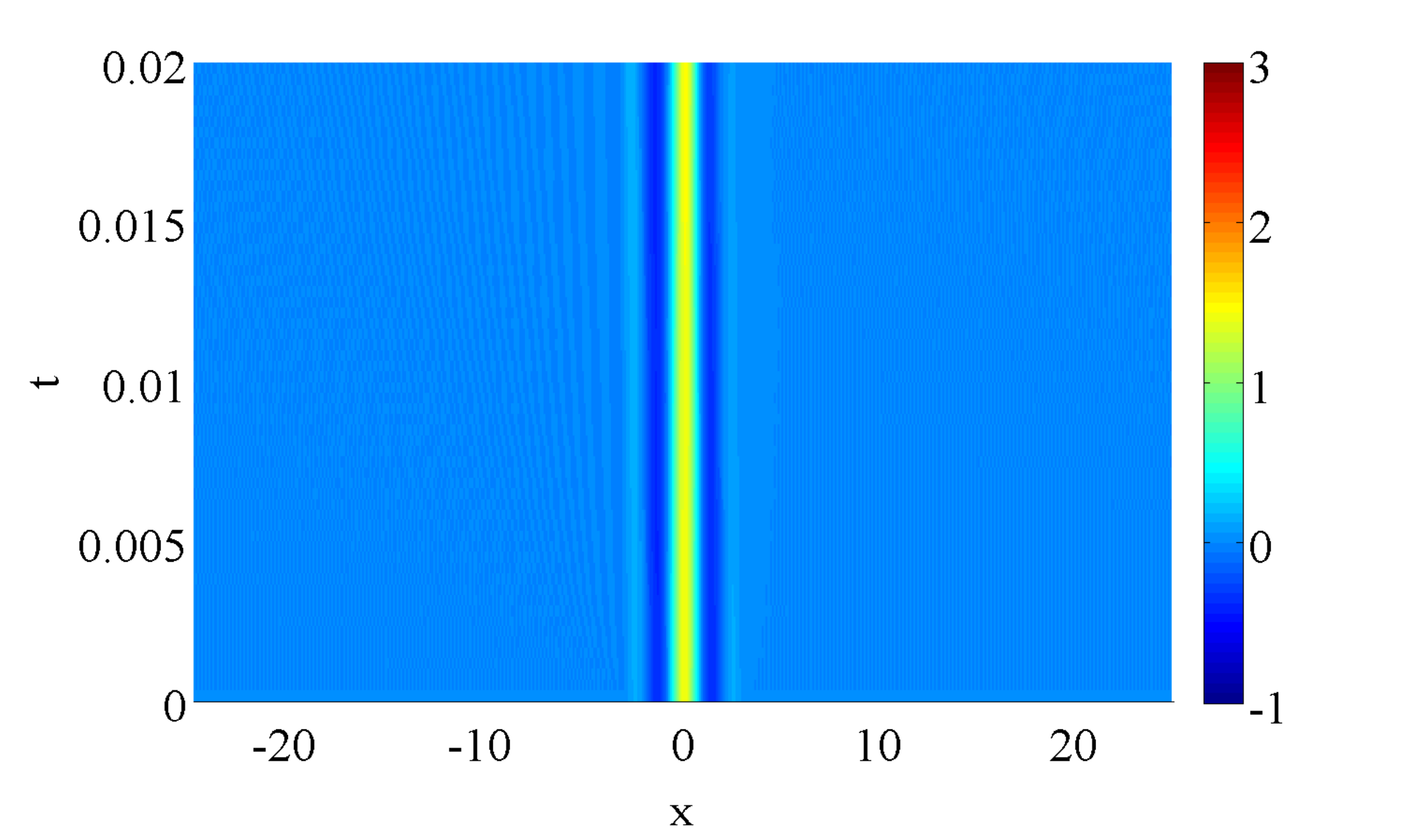}
  \end{subfigure}
  \begin{subfigure}[t]{0.45\textwidth}
    \centering
    \includegraphics[width=0.9\textwidth]{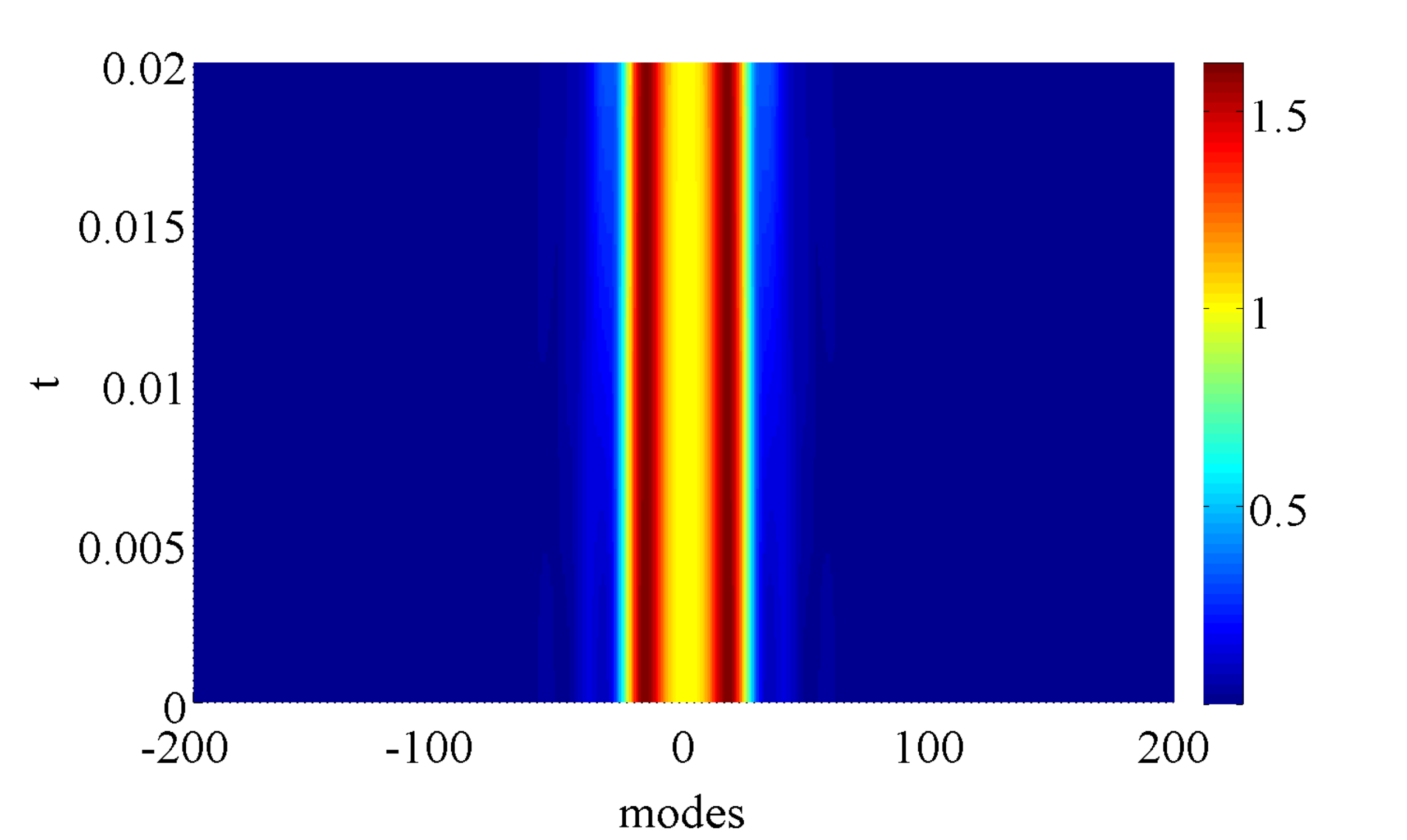}
  \end{subfigure} \\
  \begin{subfigure}[t]{0.45\textwidth}
    \centering
    \includegraphics[width=0.9\textwidth]{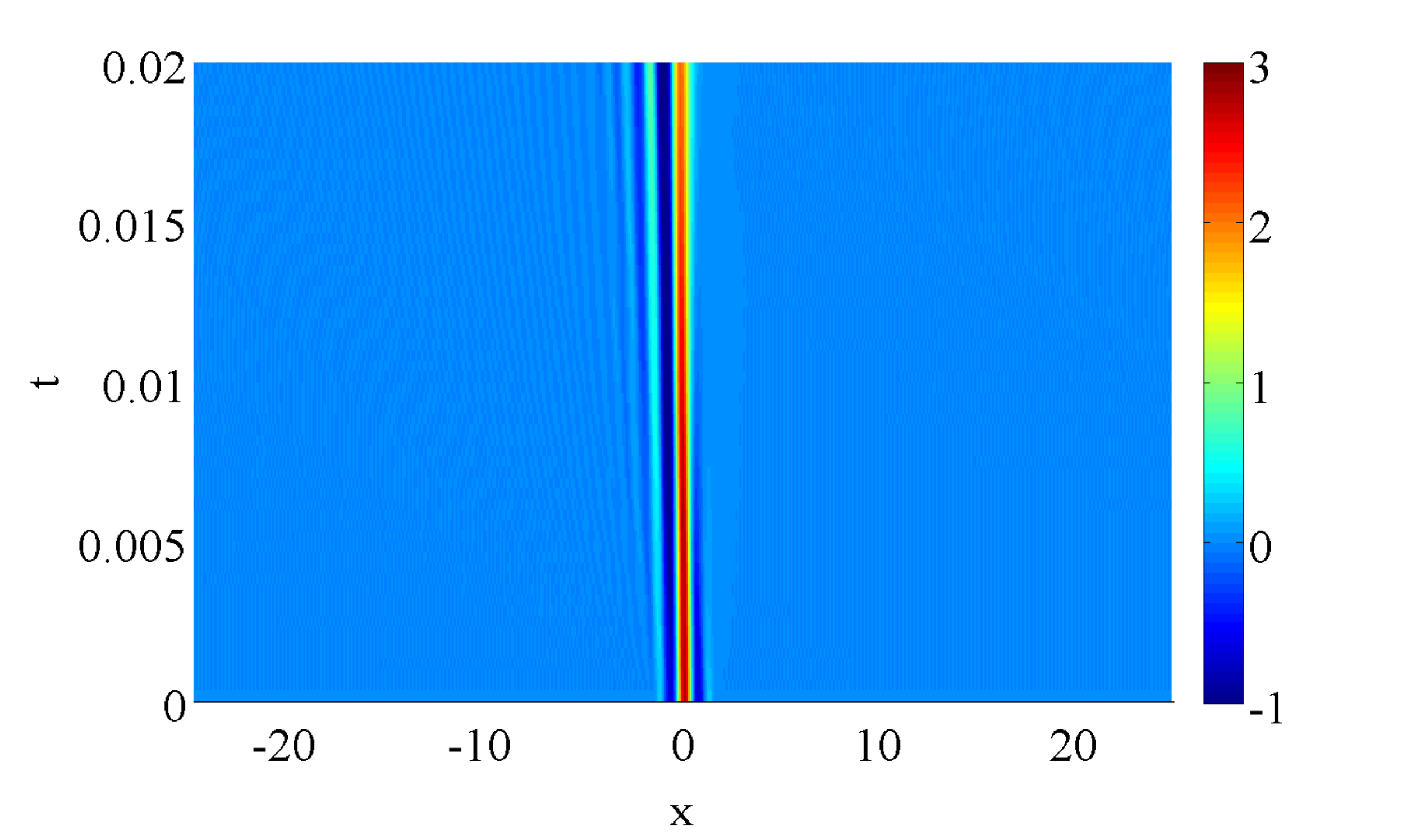}
  \end{subfigure}
  \begin{subfigure}[t]{0.45\textwidth}
    \centering
    \includegraphics[width=0.9\textwidth]{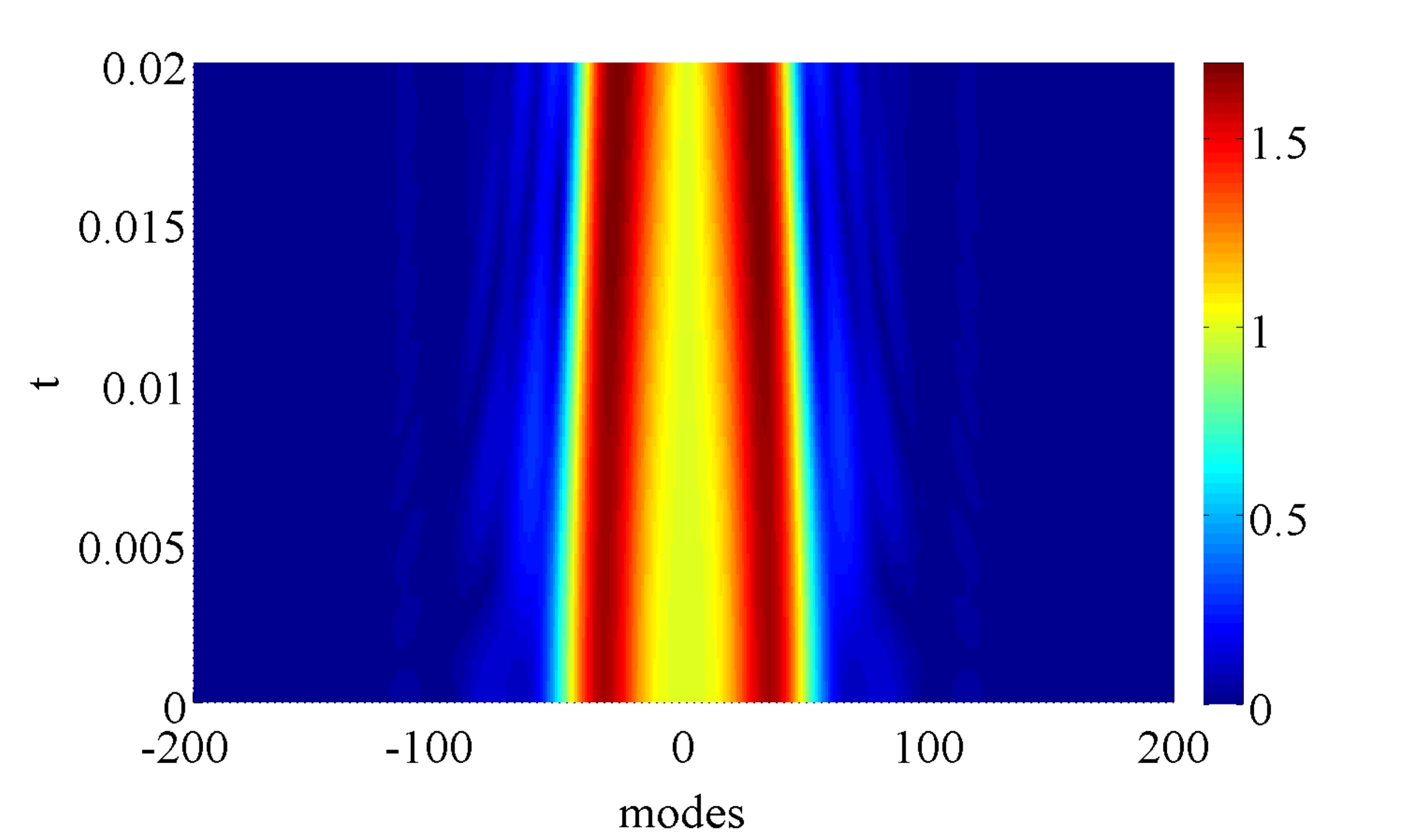}
  \end{subfigure}\\
  \begin{subfigure}[t]{0.45\textwidth}
    \centering
    \includegraphics[width=0.9\textwidth]{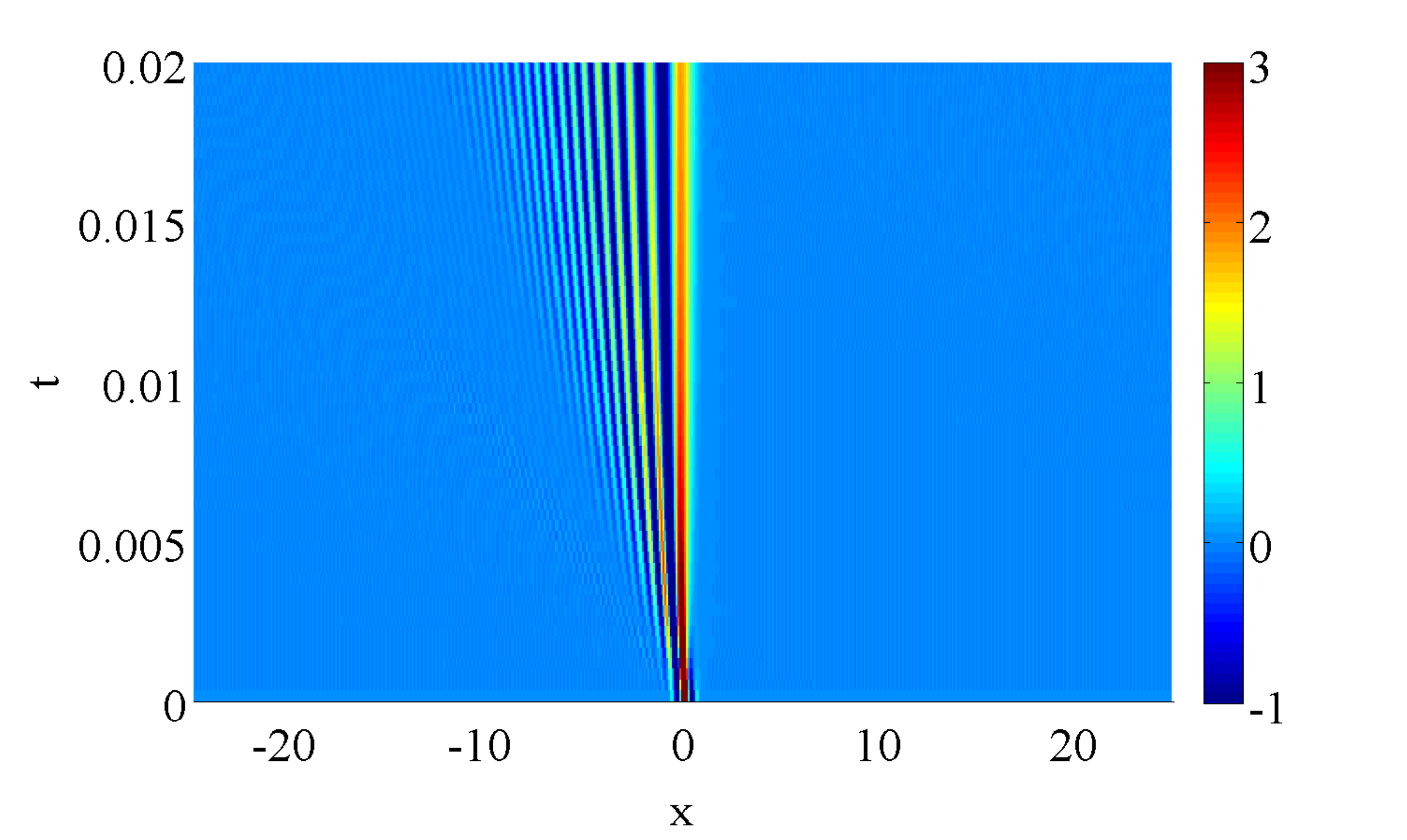}
  \end{subfigure}
  \begin{subfigure}[t]{0.45\textwidth}
    \centering
    \includegraphics[width=0.9\textwidth]{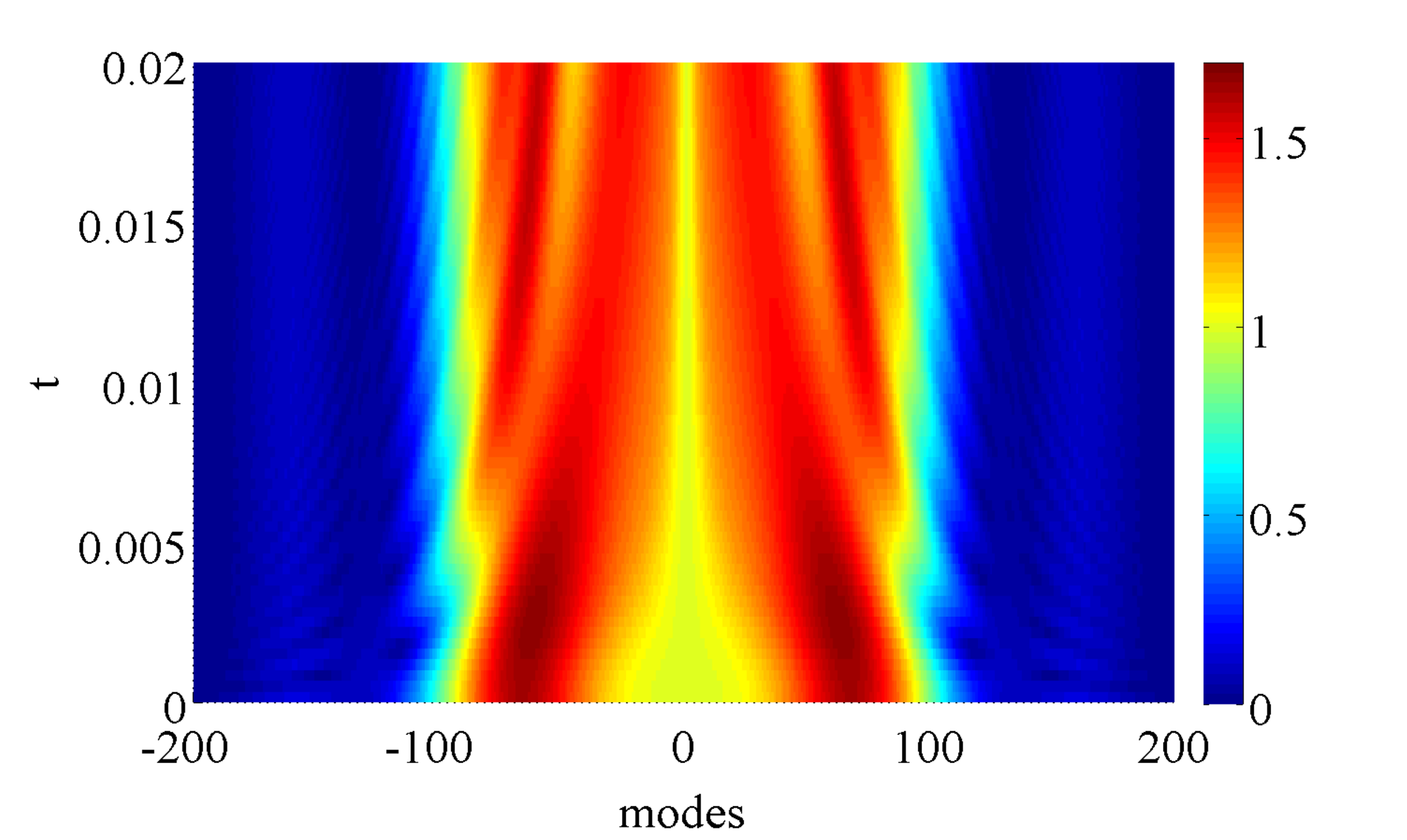}
  \end{subfigure}
  \caption{Solutions of the KdV equation with impulse initial condition
    using a smooth second moment approximation, $\eta_{2,5}$. Rows from
    top to bottom depict solutions for support of size $H=\pi$,
    $\frac{\pi}{2}$ and $\frac{\pi}{4}$. Images in the right column show
    the Fourier transform of the solution.}
  \label{fig:KDV-1}
\end{figure}

Next, we want to demonstrate the effect of different choices of moments
on the computed solution. Since $\tdelta_H$ approximates $\delta$ with a
rate depending on $m$, one may expect that the error $\|u-u_H\|_X$
should be improved with higher $m$. However, for the nonlinear wave
equations, 
the error $\|u-u_{H,h}\|_X$
critically depends on the choice of the PDE discretization method; that
is, the error is dominated by $\|u_H-u_{H,h}\|_X$. To demonstrate this, we
study solutions of \eqref{KdV} with $\delta_H$ based on $\eta_{0,1}$,
$\eta_{cubic}$ and $\eta_{2,5}$, and fix $H=\pi/4$ in each
case. Recall that $\eta_{\text{cubic}}$ is a widely-used regularization
that satisfies two discrete moment conditions \cite{tornberg}. The
$\delta_H$ based on $\eta_{2,5}$ satisfies two continuous moment
conditions, and is smooth. To provide a point of reference we also
compute the solution to equation \eqref{KdV} with a Gaussian source term
of the form
\begin{equation}
  \label{eq:1}
  \delta_\sigma(x) = \frac{1}{\sqrt{2 \pi \sigma^2}} \exp \left( \frac{- (x -
  1/2)^2}{\sigma^2} \right)
\end{equation}
where we take $\sigma = 1/64 \pi$ and solve the problem on a very fine
mesh of 8196 collocation points. The Gaussian source term is
infinitely 
differentiable and so has optimal decay of Fourier modes.

\begin{figure}[htbp] 
  \centering
  \begin{subfigure}[t]{0.45\textwidth}
    \centering
    \includegraphics[width=0.9\textwidth]{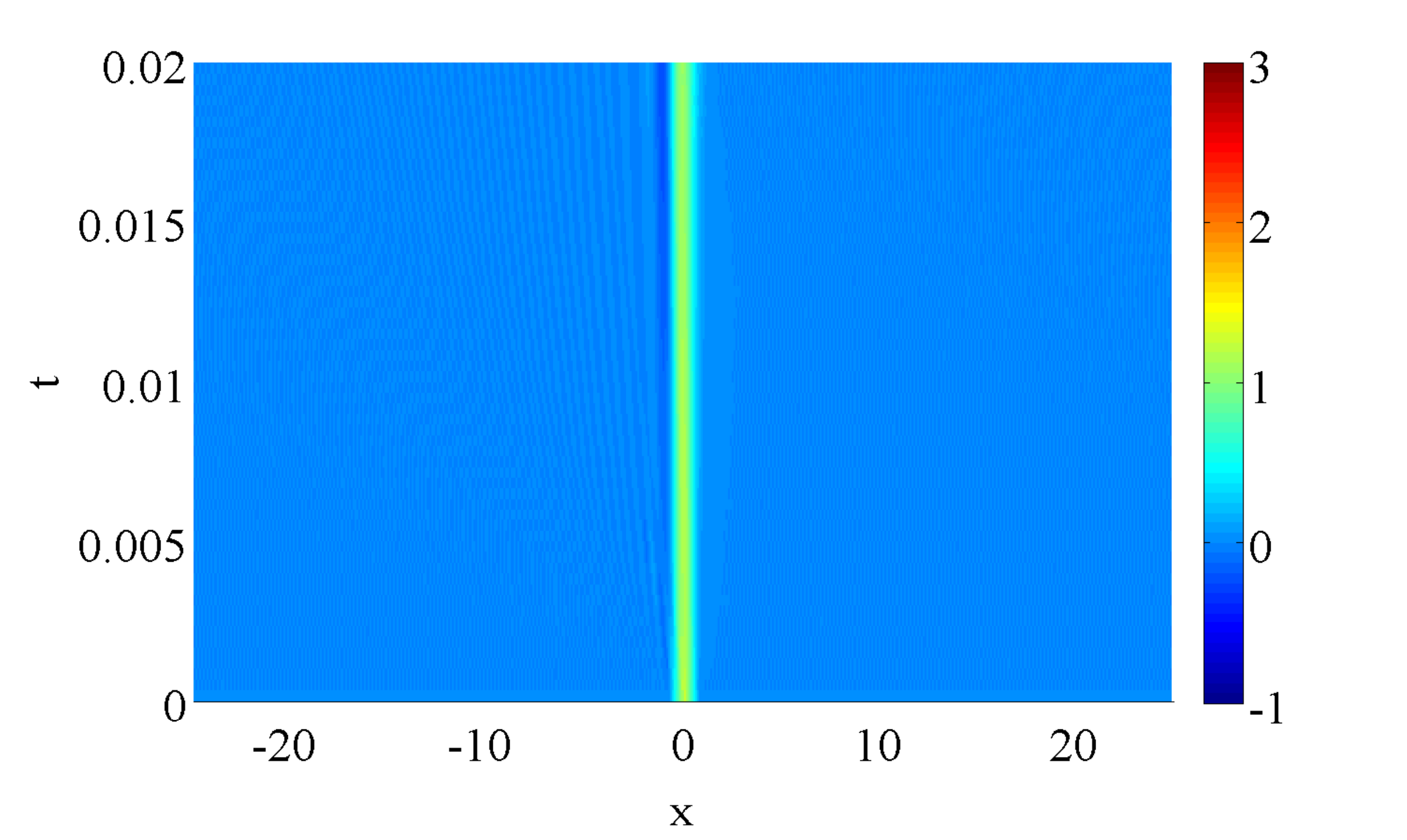}
  \end{subfigure}
  \begin{subfigure}[t]{0.45\textwidth}
    \centering
    \includegraphics[width=0.9\textwidth]{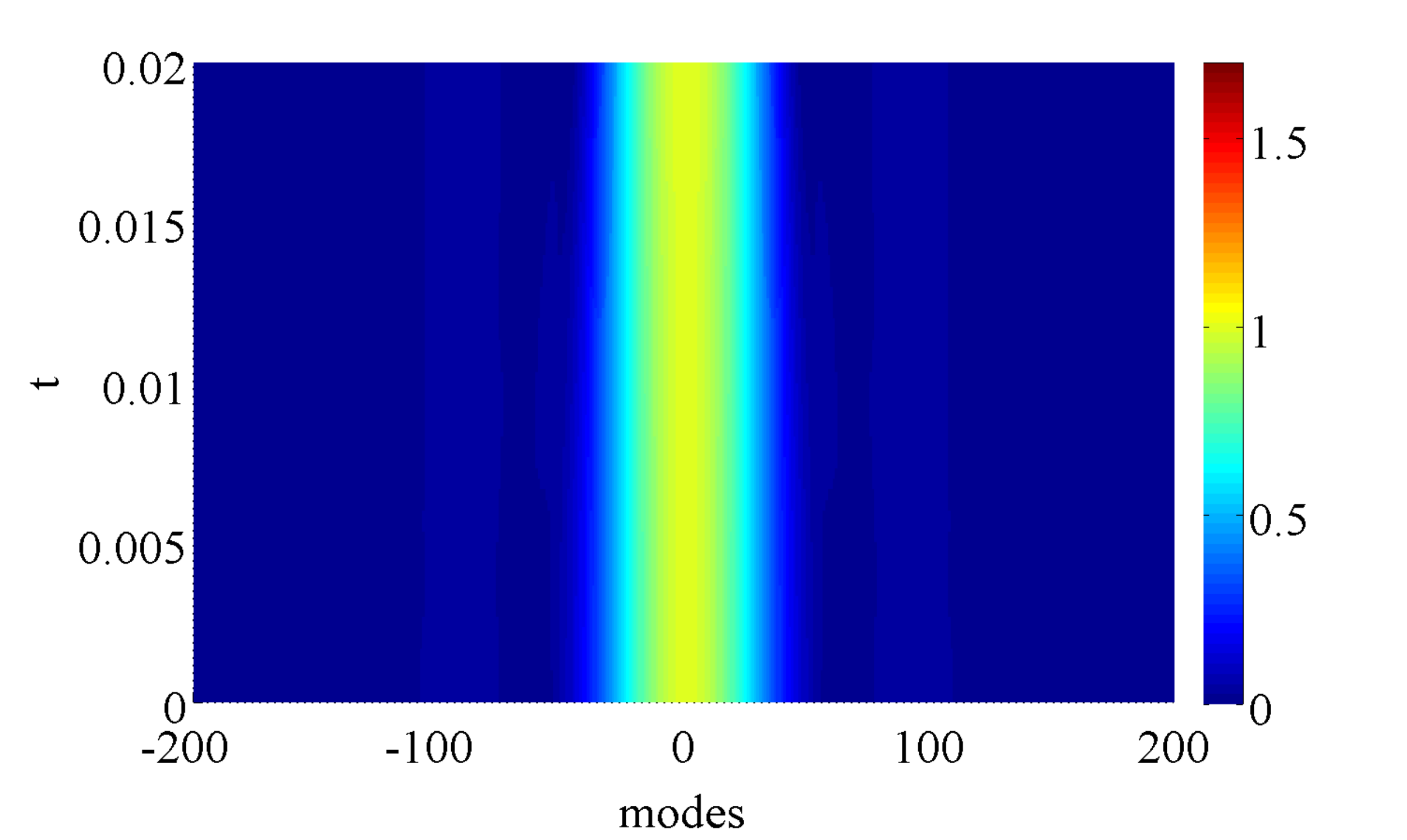}
  \end{subfigure} \\
  \begin{subfigure}[t]{0.45\textwidth}
    \centering
    \includegraphics[width=0.9\textwidth]{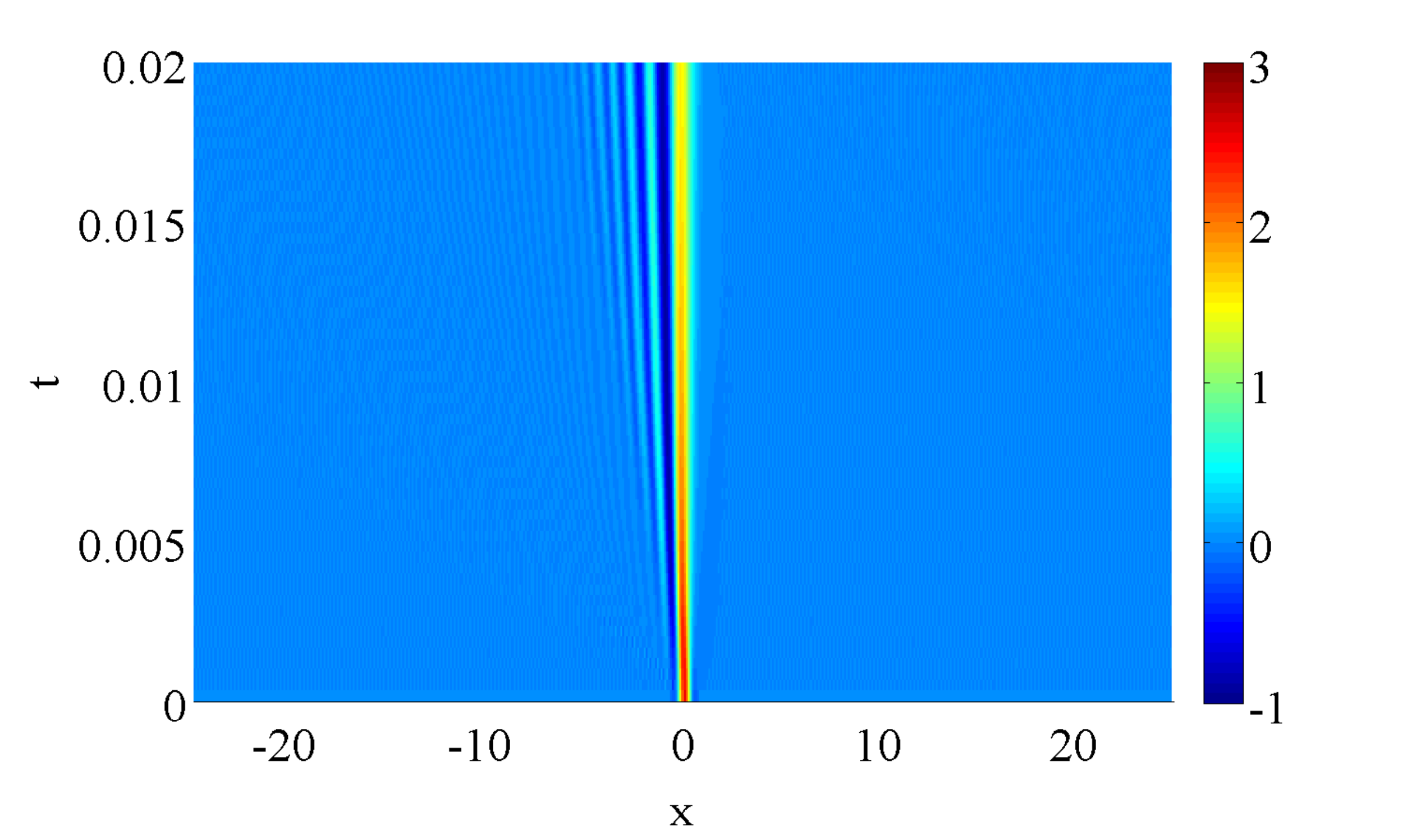}
  \end{subfigure}
  \begin{subfigure}[t]{0.45\textwidth}
    \centering
    \includegraphics[width=0.9\textwidth]{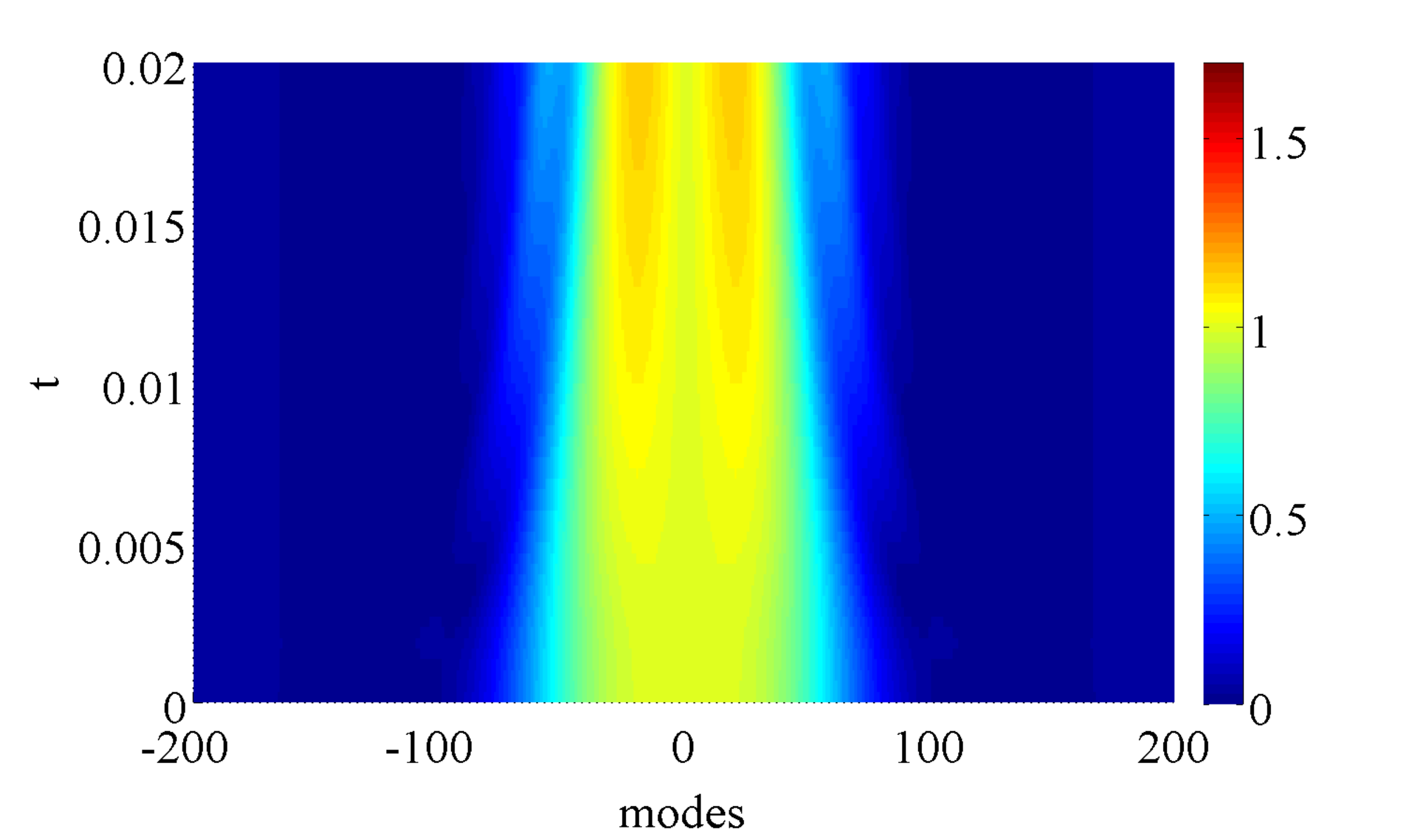}
  \end{subfigure}\\
  \begin{subfigure}[t]{0.45\textwidth}
    \centering
    \includegraphics[width=0.9\textwidth]{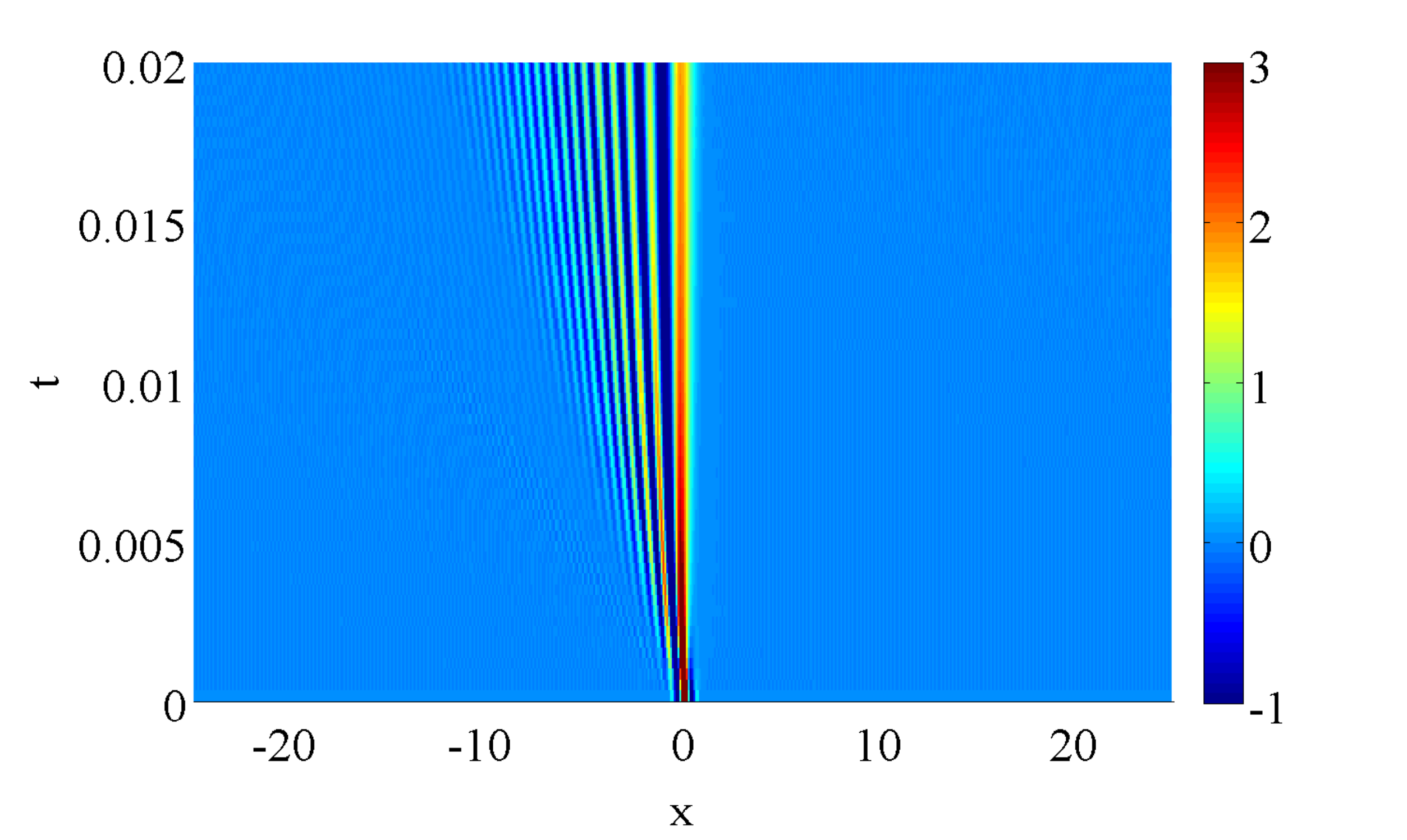}
  \end{subfigure}
  \begin{subfigure}[t]{0.45\textwidth}
    \centering
    \includegraphics[width=0.9\textwidth]{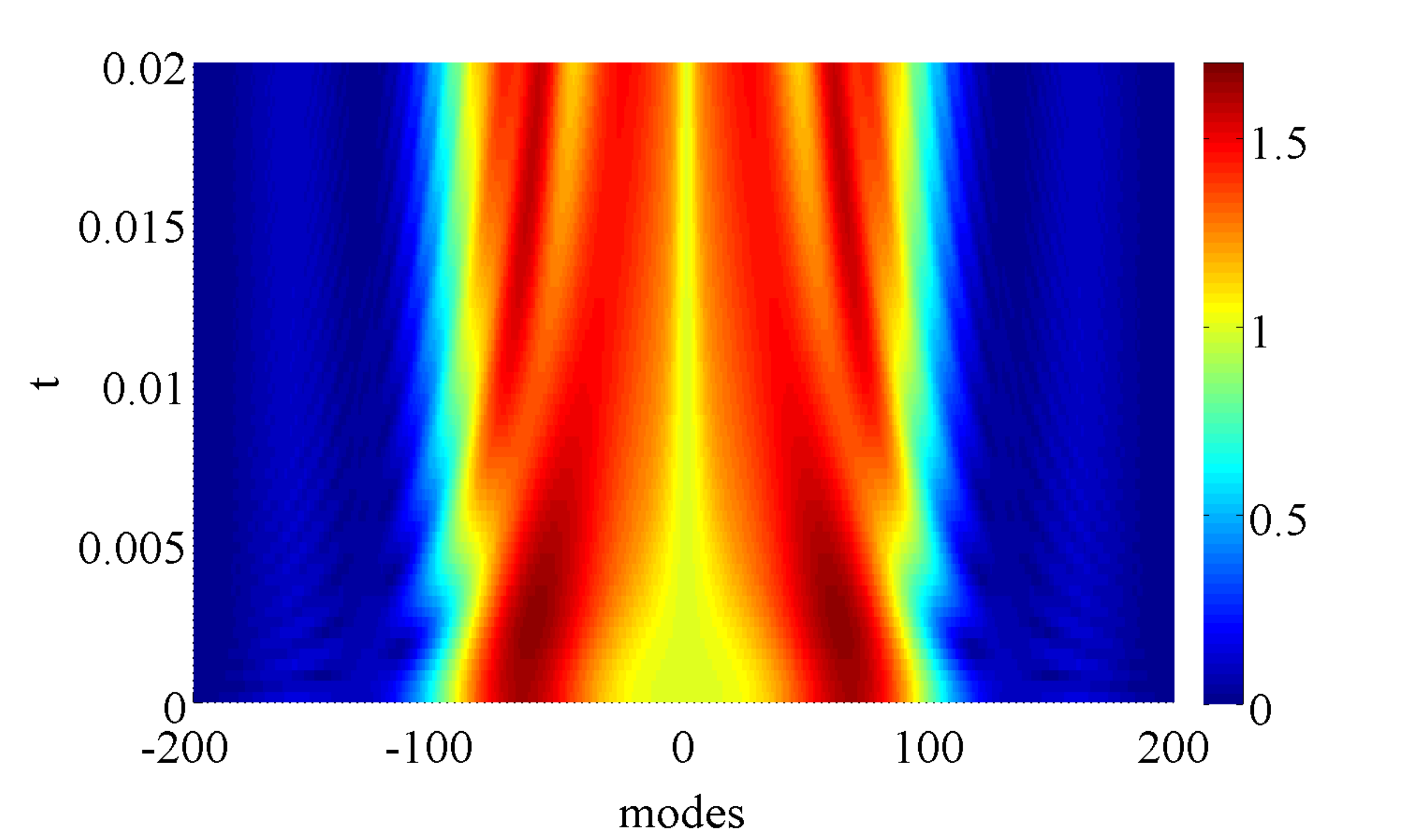}
  \end{subfigure}
  \begin{subfigure}[t]{0.45\textwidth}
    \centering
    \includegraphics[width=0.9\textwidth]{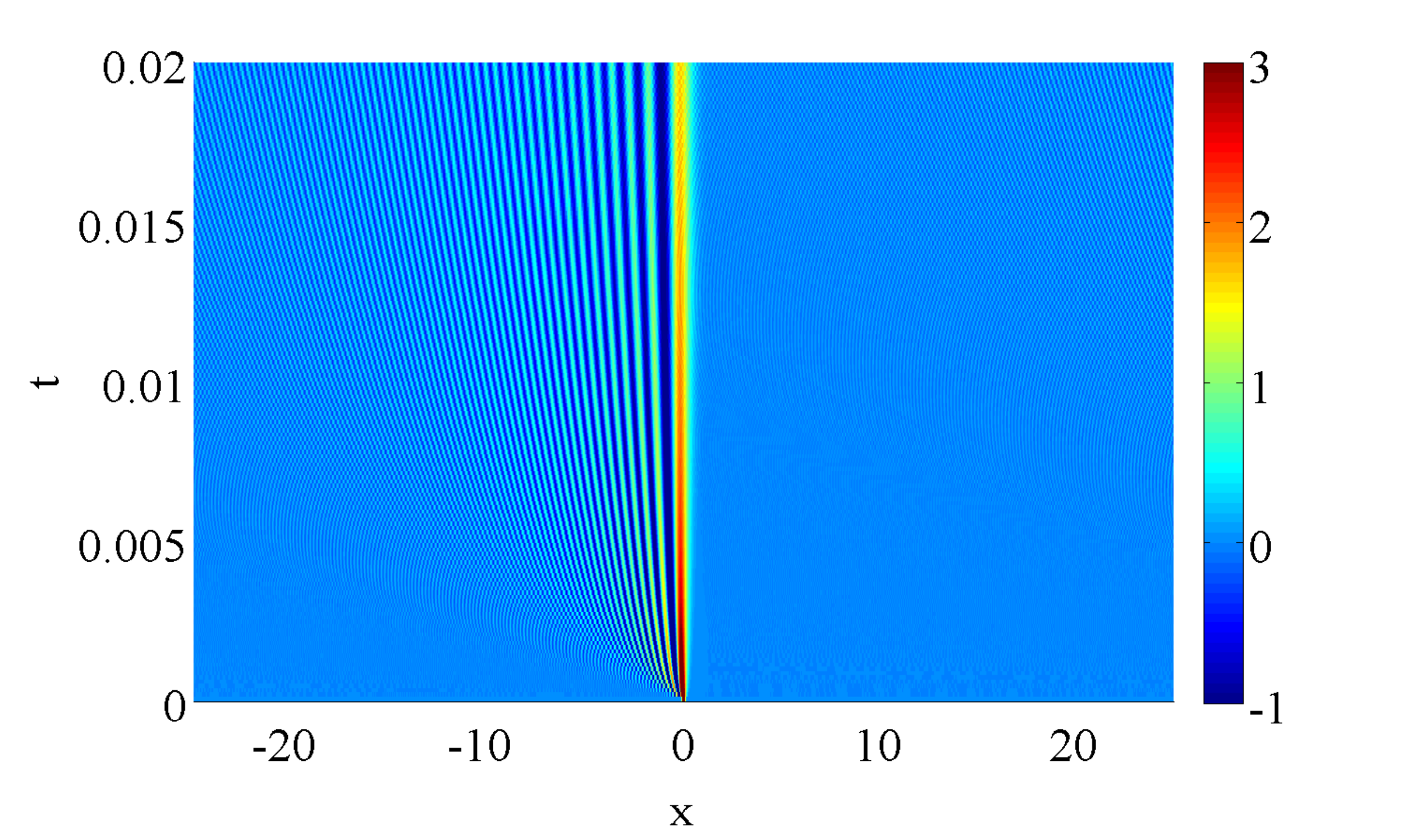}
  \end{subfigure}
  \begin{subfigure}[t]{0.45\textwidth}
    \centering
    \includegraphics[width=0.9\textwidth]{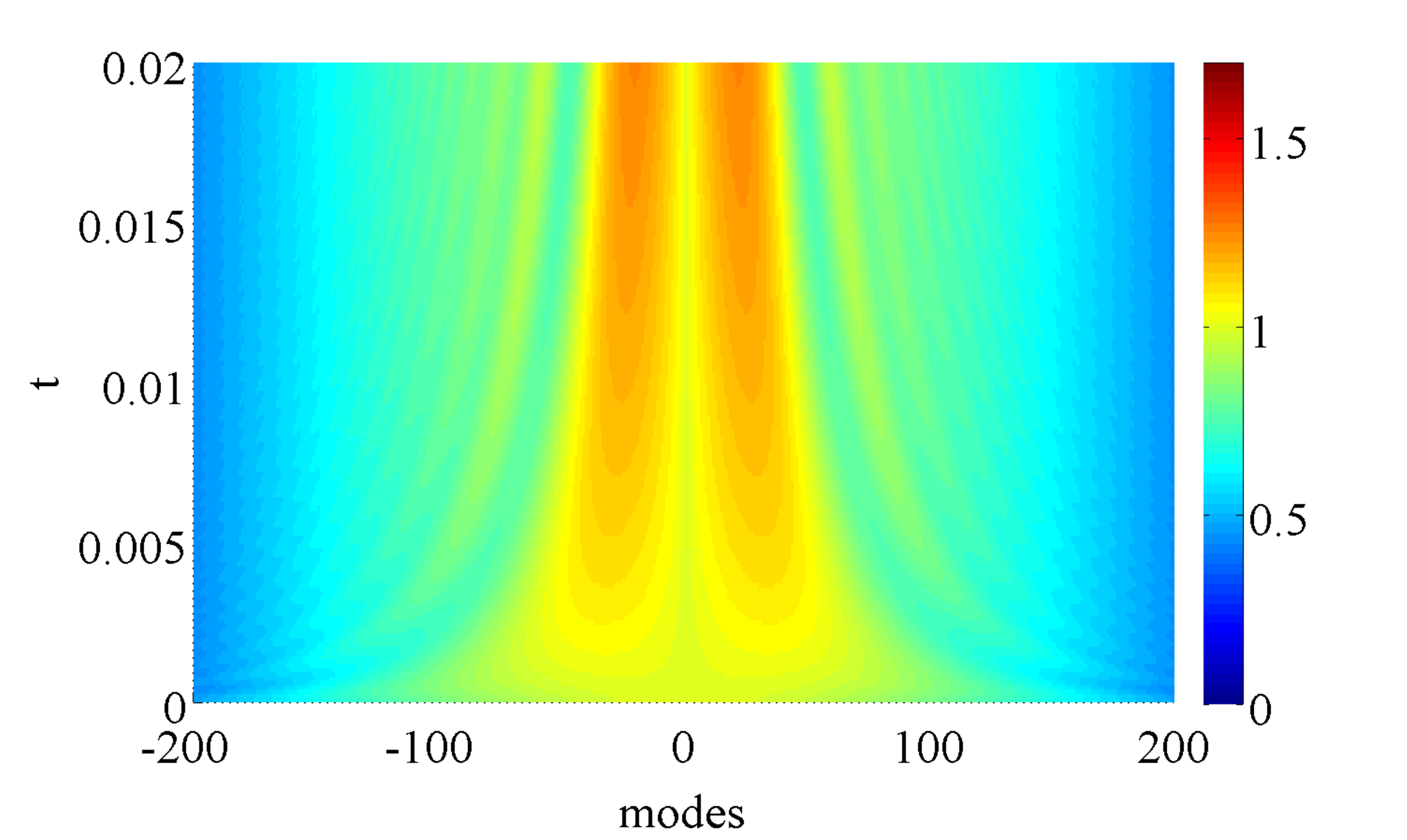}
  \end{subfigure}
  \caption{Solutions of the KdV equation with impulse initial condition
    using a first moment approximation, $\eta_{0,1}$ (top row), a
    discrete second moment approximation, $\eta_{cubic}$ (second row),
 a smooth second moment approximation $\eta_{2,5}$ (third
    row) and the reference solution using $\delta_\sigma$ on a very
    fine mesh (bottom row). The images in the right column are 
    solutions in the Fourier domain.}
\label{fig:KDV-2}
\end{figure}
Figure \ref{fig:KDV-2} depicts our solutions $u_{H,h}(x,t)$ for
different choices of $\delta_H$ along with the reference solution with
$\delta_\sigma$. The left column shows the computed solution in the
$(x,t)$ plane. The right column shows the Fourier modes of the computed
solution as time evolves. As in Figure \ref{fig:KDV-1}, we see that the
single soliton is captured for all choices of $\delta_H$. Furthermore,
the numerical solutions using $\eta_{0,1}$ and $\eta_{\text{cubic}}$ are
less noisy (and numerically better behaved) compared to
$\eta_{2,5}$. However, the lower order radiative waves are very
different in these approximations (compare to the reference solution in
the bottom row). This example poses an interesting question.  Recall
that $\eta_{\text{cubic}}$ satisfies the same number of moment
conditions as $\eta_{2,5}$ but the radiative waves look very different
between the two.  So if one is interested in capturing the radiative
waves, it seems that $\eta_{2,5}$ is the better choice for fixed $H$. On
the other hand, $\eta_{2,5}$ clearly has larger Fourier modes and so is
more difficult to handle numerically.

This example clearly demonstrates that the choice of $\tdelta_H$ must be
made carefully.  If one is constrained to using a specific algorithm for
the discretization of a PDE, then the choice of $\tdelta_H$ must be made
accordingly. However, if the goal is to accurately capture the solution
of a PDE, then one may want to {first} select a $\tdelta_H$ to
minimize the regularization error $\|u-u_H\|_X$, and then construct a
numerical method that achieves a controlled discretization error for
$\|u_H-u_{H,h}\|_X$.

%%%%%%%%%%%%%%%%%%%%%%%%%%%%%%%%%%%%%%%
%%%%%%%%%%%%%%%%%%%%%%%%%%%%%%%%%%%%%%%
%%%%%%%%%%%%%                             %%%%%%%%%%%%%%
%%%%%%%%%%%%%      Conclusions and        %%%%%%%%%%%%%%
%%%%%%%%%%%%%       Bibliography          %%%%%%%%%%%%%%
%%%%%%%%%%%%%                             %%%%%%%%%%%%%%
%%%%%%%%%%%%%%%%%%%%%%%%%%%%%%%%%%%%%%%
%%%%%%%%%%%%%%%%%%%%%%%%%%%%%%%%%%%%%%%

\section{Conclusions}

We began this article in Section~\ref{sec:intro} by posing four questions
concerning approximations of singular source terms in PDEs. We argued
that answers to the last two questions are required before the first two
can be answered; that is, we first need to consider different modes of
convergence when approximate distributions $\mathcal{S}_H \to
\mathcal{S}$ and the corresponding solutions $u_{H,h} \to u$ before it
is possible to make statements about what it means to have good
approximations. Our response to these questions can be summarized as
follows:
\begin{enumerate}[{\it {Question}~1.}]
  \setcounter{enumi}{2}
\item What form of convergence should be used to examine $\mathcal{S}_H
  \rightarrow \mathcal{S}$?
\end{enumerate}

We provide two alternative answers to this question.  Convergence in the
weak-$\ast$ topology yields a system of moment conditions that permit
the construction of sequences of distributions with arbitrary regularity
and rates of convergence. We also consider convergence in the weighted
Sobolev norm $\| \cdot \|_{(W_{-\alpha})^*}$ that allows one to study
the interplay between the resolution of a numerical scheme and the
support of the regularized source terms, and their effect on the rate of
convergence of the numerical scheme.
\begin{enumerate}[{\it {Question}~1.}]
  \setcounter{enumi}{3}
\item [{\it Question 4.}]What form of convergence should be used to
  examine $u_{H,h} \rightarrow u$?
\item[{\it Question 2.}] How does the choice of approximation
  $\mathcal{S}_H$ affect the the convergence of $u_{H,h} \rightarrow u$?
\end{enumerate}

The response to these two questions is problem-dependent and is
conditioned on the type of PDE being considered.  We showed that for a
general class of elliptic PDEs, as well as the first- and second-order
hyperbolic wave equations, numerical solutions converge pointwise in
some parts of the domain. That is, we obtain pointwise convergence away
from the support of the source term in case of elliptic PDEs and away
from the wave front in hyperbolic problems. The rate of convergence
depends on the rate of convergence of $\mathcal{S}_H \to \mathcal{S}$ in
the weak-$\ast$ topology.  We also showed that for elliptic PDEs we can
obtain convergence in the weighted Sobolev norms $\| \cdot
\|_{W_{\alpha}}$, where the convergence rate again depends on that of the 
regularization in the $\| \cdot \|_{(W_{-\alpha})^*}$
norm. Consequently, convergence of distributions $\mathcal{S}_H \to
\mathcal{S}$ controls convergence of the solution.
Finally, we can return to
\begin{enumerate}[{\it {Question}~1.}]
\item How do we construct `good' approximations $\mathcal{S}_H$ to
  $\mathcal{S}$?
\end{enumerate}

If one is interested in approximating singular sources in the sense of
distributions only, then arbitrarily high rates of convergence can be
achieved simply by satisfying higher moment conditions. We proposed a
general framework for solution of finite dimensional moment
problems. Our approach is very flexible and allows construction of
approximations using different bases. Furthermore, the lack of
uniqueness in the moment equations affords us the advantage of being
able to impose additional constraints on our constructions, such as
smoothness at certain points in the domain.

If the final goal is instead to approximate solutions of an elliptic
PDE, then we can obtain higher rates of pointwise convergence sufficiently away from the
source by
satisfying more moment conditions. Over the entire domain, the problem behaves
differently. Convergence in the weighted Sobolev norm depends on the
problem resolution and the support of the regularizations.  In
the limit when the support of the regularizations is the same order as
the mesh size of a numerical solver, the rate of convergence is
independent of the number of moment conditions and so there is no
difference between a simple 0-moment approximation and a 2-moment
approximation.

Applying our results in the context of hyperbolic PDEs proved more
challenging. We presented numerical evidence that numerical errors due
to dispersion become important for long-time solutions, and for this
reason we were unable to observe the expected regularization
error. Furthermore, for linear hyperbolic problems pointwise
convergence should be considered away from the wave front rather than at
a distance away from the support of the initial condition. 

We also demonstrated that the choice of regularization can have a
significant impact on solutions of PDEs. We looked in particular at the
second-order wave equation and compared the use of a radial delta with a
tensor-product approximation, showing that the latter produces
non-symmetric solutions.  Tensor product approximations to singular
sources are used commonly in practice, but one must be cautious and pay
careful attention to the qualitative effect of this class of
approximations on the numerical solutions, especially for nonlinear
problems and cases when advective terms dominate.

\section*{Acknowledgments}
We thank the anonymous referees whose suggestions considerably
improved the paper. We also thank Prof. S. Ruuth for helpful discussions and for the insightful
questions that motivated this work.

\bibliographystyle{abbrv}   
\bibliography{references}

\end{document}